\def \eps {\varepsilon}

\documentclass[11pt]{article}
\usepackage{amsfonts}
\usepackage{bbding}
\usepackage[dvips]{graphicx}
\usepackage{amsfonts,amssymb,amsmath,amsthm,amscd}
\usepackage{mathrsfs}
\usepackage{xcolor}
\usepackage{empheq}
\usepackage{adforn}
\usepackage{cancel}
\usepackage{xr}
\usepackage{mdframed}
\usepackage{mathdots}
\usepackage{enumerate}
\usepackage{lmodern}
\usepackage{mdframed}
\usepackage{stmaryrd}
\usepackage{blindtext}
\usepackage[all, knot]{xy}
\usepackage{leftidx}
\usepackage{accents}

\definecolor{slightblue}{rgb}{.8, .8, 1}
\definecolor{hair}{RGB}{100,225,190}
\definecolor{ruby}{RGB}{220,50,120}
\definecolor{grass}{RGB}{150,220,110}

\usepackage[colorlinks=true, pdfstartview=FitP, linkcolor=hair!80!black, citecolor=hair!80!black, urlcolor=hair!80!black]{hyperref}

\usepackage[T1]{fontenc}

\newtheorem{theorem}{Theorem}[section] \newtheorem{proposition}[theorem]{Proposition}
\newtheorem{lemma}[theorem]{Lemma}

\theoremstyle{definition} 
\newtheorem{definition}[theorem]{Definition}

\theoremstyle{remark} \newtheorem{remark}[theorem]{Remark} \numberwithin{equation}{section}
\numberwithin{figure}{section}


\newcommand{\R}{\mathbb{R}}

\renewcommand{\H}{\mathbb{H}}

\renewcommand{\P}{\mathbf{P}}
\newcommand{\E}{\mathbf{E}}

\newcommand{\U}{\mathbb{U}}

\usepackage{sidecap,subcaption}
\usepackage[section]{placeins}
\usepackage{wrapfig}
\usepackage{fullpage}

\usepackage[french,english]{babel}


\begin{document}

\title {Conditioning a Brownian loop-soup cluster\\ on a portion of its boundary}
\author{Wei Qian}
\date{}

\maketitle
\begin{abstract}
 We show that if one conditions a cluster in a Brownian loop-soup $L$ (of any intensity) in a two-dimensional domain by a portion $\partial$ of its outer boundary, then in the remaining domain, the union of all the loops of $L$ that touch $\partial$ satisfies the conformal restriction property while the other loops in $L$ form an independent loop-soup. This result holds when one discovers $\partial$ in a natural Markovian way, such as in the exploration procedures that have been defined in order to actually construct the Conformal Loop Ensembles as outer boundaries of loop-soup clusters. 
This result implies among other things that a phase transition occurs at $c=14/15$ for the connectedness of the loops that touch $\partial$. 
 
Our results can be viewed as an extension of some of the results in our paper \cite{Qian-Werner} in the following two directions: 
There, a loop-soup cluster was conditioned on its entire outer boundary while we discover here only part of this boundary.
And, while it was explained in \cite {Qian-Werner} that the strong decomposition using a Poisson point process of excursions that we derived there 
should be specific to the case of the critical loop-soup, we show here that in the subcritical cases, a weaker property involving the conformal restriction property nevertheless holds. 
\end{abstract}

\selectlanguage{french} 
\begin{abstract}
Dans le présent article, nous étudions certaines propriétés des amas de lacets browniens, dans une soupe de lacets browniens d'intensité $c$ dans un domaine du plan, pour toute intensité $c \le 1$. 
 
Notre principal résultat dit que si l'on découvre de manière Markovienne une portion $\partial$ du bord extérieur d'un tel amas, alors dans le domaine restant, la loi conditionnelle de l'union de tous les lacets dans $L$ qui touchent $\partial$ satisfait la propriété de restriction conforme tandis que les autres lacets dans $L$ forment une soupe de lacets indépendante. Ceci implique en particulier l'existence d'une transition de phase à $c=14/15$ pour la connectivité de l'ensemble des lacets qui touchent $\partial$.

Nos résultats constituent une extension de certaines résultats de notre papier \cite{Qian-Werner} dans les deux directions suivantes: Dans \cite{Qian-Werner}, un cluster de lacets est conditionné par son bord extérieur entier tandis que nous découvrons ici seulement une partie de ce bord. En outre, dans \cite{Qian-Werner}, nous expliquons que la description que nous donnons de l'ensemble des lacets qui touchent ce bord via un processus ponctuel de Poisson d'excursions est spécifique au cas de la soupe de lacets critique ($c=1$), nous montrons ici que dans les cas sous-critiques $c <1$, une propriété plus faible de restriction conforme reste néanmoins vraie.
\end{abstract}

\selectlanguage{english} 

\noindent{\bf MSC:} 60J65, 60J67, 60K35

\noindent{\bf Keywords:} Brownian loop-soups, Conformal loop ensembles, Schramm-Loewner evolution, Conformal restriction

\section{Introduction}

\subsection {General goal}

The simple conformal loop ensembles (CLE) form a natural conformally invariant class of random collections of non-intersecting random loops in a simply connected domain, and they are conjectured to arise as scaling limits of a number of discrete models (for instance, each CLE is the conjectural scaling limit of a discrete dilute $O(N)$ model -- in the special case of the critical Ising interfaces i.e. the $O(1)$ model and CLE$_3$, it has now been shown to hold, see \cite{BH} and the references therein). All these simple CLEs have been constructed by Sheffield and Werner as outer boundaries of Brownian loop-soup clusters in \cite {MR2979861}, which proved in particular that the SLE branching tree construction from \cite {MR2494457} did indeed construct a conformally invariant collection of disjoint loops. 
In the special critical case, this CLE (which is CLE$_4$) can also be constructed as level lines of the two-dimensional Gaussian free field (GFF), see Miller-Sheffield \cite {MS}, or \cite {ASW}. 
There exists also a coupling of the GFF with the other CLEs, but it is much more involved and somewhat less natural (it for instance relies on various choices, such as the choice of a root on the boundary of the domain), see for instance the comments in \cite {M-S-W, M-S-W2}. This suggests that understanding better the coupling of CLEs with subcritical loop-soups is actually of interest 
in order to understand better CLEs themselves.

In \cite {Qian-Werner}, we have described features of the conditional distribution of the loop-soup given the CLE that it defines  (i.e. given the outermost boundaries of clusters in the loop-soup). In particular, we have shown that conditionally on the CLE, the loop-soup splits into two different and conditionally independent parts: The set of loops that touch the CLE loops, and the set of loops that do not touch the CLE loops. In all CLEs, the latter collection turns out to be distributed like a loop-soup within the complement of the CLE loops. In the special critical case of CLE$_4$, related to the GFF, we have furthermore shown that the trace of the former collection of loops (that do touch the CLE$_4$ loops) is distributed exactly like a Poisson point process of Brownian excursions away from the CLE$_4$ loops. 
As explained at the end of \cite {Qian-Werner}, we do not expect this second result to be valid for subcritical loop-soups.

In the present paper, we present further results on the distribution of the loop-soup conditionally on part of the CLE loops. For instance, one considers explorations of the CLE from the boundary, where one traces along a CLE loop as soon as one hits it, and one stops this exploration while tracing a loop, and we will describe features of the conditional distribution of the loop-soup that are valid throughout the subcritical regime: 
\begin {itemize}
\item Again, the set of loops that intersect the discovered CLE pieces are independent from the set of loops that do not intersect the discovered CLE portions. The latter set is just distributed like a loop-soup 
in the remaining domain. 
\item The trace of the set of loops that do intersect the CLE portions are shown to satisfy a conformal restriction property. 
\end {itemize}

These results allow to shed some further light on the construction of CLEs via Brownian loop-soups.  
One interesting  outcome is the following phase transition that occurs at $c=14/15$ for the structure of the clusters. If we look at one outermost cluster in the loop-soup and denote by $\Gamma^b$ the set of loops in that cluster that touch its outer-boundary, then:
\begin{itemize}
\item
The loops in $\Gamma^b$ alone almost surely do not hook up into one single cluster when $c\in(14/15,1]$. However, they are almost surely hooked up into one single cluster by the loops  that are of positive distance from the outer-boundary of that outermost cluster.
\item The loops in $\Gamma^b$ alone do almost surely hook up into one single cluster when $c\in(0,14/15]$.
\end{itemize}
Note that the value $c=14/15$ is equal to $1-6/(m(m+1))$ for $m=9$, which is one of the special values of $c$ corresponding to the unitary irreducible highest weight representation in the Virasoro algebra (see \cite{PhysRevLett.52.1575}).  We do not know, however, whether it is a coincidence.

Another by-product of the results is a simple a posteriori explanation of the relation between conformal restriction measures, loop-soups and SLE curves that was derived by Werner and Wu in \cite {MR3035764} (see also \cite {MR2023758}). 

Our proofs will combine ideas from \cite {Qian-Werner} with tools developed in the derivation of the Markovian characterization of CLEs in \cite {MR2979861}.

\subsection{Background}

Before stating more precisely the main results of the present paper, we briefly survey some facts and earlier results about Brownian loop-soup clusters in two-dimensions and Conformal Loop Ensembles:

\medbreak
\noindent
{\bf The simple Conformal Loop Ensembles.}
In \cite{MR2979861}, Sheffield and Werner have defined the law of a simple CLE in $\U$ to be a collection  $\Lambda$ of countably many disjoint random simple loops that are all contained in $\U$ which satisfies the following two axioms:
\begin{itemize}
\item The law of $\Lambda$ is invariant under any conformal automorphism from $\U$ onto itself. Hence we can define a CLE in any simply connected domain $D$ by letting it be the image of $\Lambda$ under some given conformal map from $\U$ onto $D$.

\item 
For all  $A\subset\overline\U$ such that $\U\setminus A$ is simply connected, let $\tilde A$ be the union of $A$ with all the loops in $\Lambda$ that it intersects.
Conditionally on $\tilde A$, in each of the connected component $O$ of $\U\setminus \tilde A$, the loops  in $\Lambda$ that stay in $O$ are distributed like a CLE in $O$, independently of $\tilde A$ and of the loops outside of $O$.
\end{itemize}
They have also proved in \cite{MR2979861} that the simple CLEs form exactly a one-parameter family of measures indexed by $\kappa\in(8/3,4]$. 
We will come back to some of the ideas developed in this paper \cite{MR2979861} in the next paragraphs. 

\medbreak
\noindent
{\bf The Brownian loop-soups.}
The Brownian loop-soups have been introduced by Lawler and Werner in \cite{MR2045953}.
A Brownian loop-soup $\Gamma_D$  in a domain $D$ is a  Poisson point process  of unrooted Brownian loops  of intensity $c$, restricted to the loops that are contained in $D$. 
We choose the same renormalization of the measure $\mu^{\text{loop}}$ on unrooted Brownian loops  as in  \cite{MR2045953} (see for example (\ref{path-decomposition})).
The Brownian loop-soup is conformally invariant.
For any two domains $D_1, D_2$ such that there is a conformal map $\varphi$ from $D_1$ onto $D_2$, the law of $\varphi(\Gamma_{D_1})$ is equal to the law of $\Gamma_{D_2}$.

 When one studies loop-soups in a simply connected domain, by conformal invariance, one can choose to work the unit disk $\U$ or the upper half-plane $\H$.
A loop-soup $\Gamma$ in $\U$ contains almost surely infinitely many small loops (of diameter smaller than $\delta$) and only finitely many big loops.  A cluster of loops is defined to be an equivalent class  such that two loops $\gamma_1, \gamma_2$ in $\Gamma$ are in the same cluster if and only if there  is a finite sequence of loops $\gamma_3,\cdots,\gamma_n$ in $\Gamma$ such that $\gamma_i$ intersects $\gamma_{i+1}$ for $i=2,\cdots,n$, where $\gamma_{n+1}=\gamma_1$.
It is shown in \cite{MR2979861} that when $c\le 1$, the loop-soup $\Gamma$ of intensity $c$ contains a.s. infinitely many clusters and that for $c>1$, it contains a.s. one single cluster. 
In the present paper, we only study the case $c\le 1$. All our results are trivially true for the supercritical regime, in which no loop  touches $\partial \U$  (the outer boundary of the unique cluster).

It is furthermore proved in \cite {MR2979861} that the Brownian loop-soups of intensity $c\in(0,1]$ enable to construct simple CLEs. Almost surely, the closures of different clusters in a loop-soup do not intersect the boundary of the domain or intersect each other, and the collection of the outer boundaries of all the outer-most clusters in $\Gamma$ have the law of a CLE$_\kappa$ where $c(\kappa)=(3\kappa-8)(6-\kappa)/(2\kappa), \kappa\in(8/3,4]$. 
In Figure \ref{fig:loop-soup}, we see the loop-soup $\Gamma$ and the red interfaces which represent the outer boundaries of all the outer-most clusters in $\Gamma$.

\medbreak 
\noindent 
{\bf Markovian explorations of CLEs.} 
One idea of the paper \cite{MR2979861} is that the restriction property of CLE allows one to explore the collection $\Lambda$ from the boundary in a Markovian way that leads to the fact 
that the loops of an CLE are SLE loops. 
One example of such a Markovian exploration (is described in detail and used in \cite{MR2979861}) goes as follows:  One can discover the loops of a CLE $\Lambda$ in $\U$ in their order of appearance going from the right to the left along the segment $[-1,1]$. 

In fact, when one encounters a loop, instead of discovering the entire loop at once, one can choose to trace it in the counterclockwise direction until we close the loop.  
Then, one continues moving left along $[-1,1]$ and trace the (infinitely many) loops that still remain to be discovered. 
We denote by $\gamma$ the piecewise continuous curve (we choose for instance the right-continuous version) which is the concatenation of all the loops that we have discovered in this procedure.

One can for instance choose (but this is not important for our purposes because the actual time-parametrization will be irrelevant in our statements) to parametrize $\gamma$ up to the disconnecting time $S$ of the origin (that corresponds to the time at which one discovers the CLE loop surrounding the origin) using the radial normalization: For each $t>0$, let $K_t$ be the hull of $\gamma([0,t])$. Let $\varphi_t$
be the conformal map from $\U\setminus K_t$ onto $\U$ such that $\varphi_t(0)=0, \varphi_t'(0)>0$.
The radial parametrization of  $\gamma$ is such that $\varphi_t'(0)=\exp(t)$. See Figure \ref{fig:markov-exp}.

 One can also (and this would allow to work beyond $S$) parametrize using the chordal capacity of $K_t$ in $\U$ as seen from $-1$. 

\begin{figure}[h!]
\centering
\includegraphics[width=.95\textwidth]{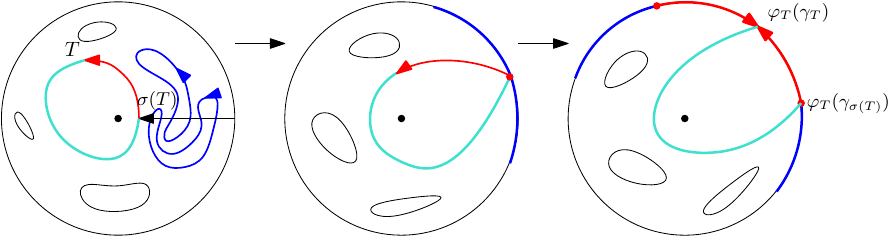}
\caption{The map $\varphi_T$ maps  $\U\setminus K_{T}$ onto $\U$.}
\label{fig:markov-exp}
\end{figure}

We call $T$ a stopping time for this exploration if it is a stopping time with respect to the filtration generated by $\gamma$. One way to interpret some of the results of \cite{MR2979861} goes as follows: Conditionally on $\gamma([0,T])$,
\begin{itemize}
\item If at time $T$, $\gamma$ is not tracing a CLE loop, then $\Lambda$ restricted to $\U\setminus K_T$ is just an independent CLE in $\U \setminus K_T$.
\item If at time $T$, $\gamma$ has only traced a part $\partial$ of some loop $\gamma_0$,  then we denote by $\sigma(T)$ the time at which we start tracing the loop $\gamma_0$. Then the conditional law of the rest of that loop is that of an SLE$_\kappa$ from $\gamma_T$ to $\gamma_{\sigma(T)}$ in $\U \setminus K_T$. 
\end{itemize}

Other examples of explorations of CLEs for which this property holds are described in \cite{MR2979861} or in \cite{MR2979861, MR3057185} (where it is proved that Sheffield's branching tree construction/exploration from \cite {MR2494457} gives indeed rise to a conformally invariant collection of loops satisfying the CLE axioms).  
\medbreak 

One question that we are going to answer in the present paper goes as follows: {\em
Suppose that a CLE has been constructed via a Brownian loop-soup as described above, and one discovers this CLE using a Markovian exploration that stops at some stopping time $T$. What can be said about the 
conditional distribution of the loop-soup?} Before giving an answer, we need to recall two further items.

\medbreak
\noindent
{\bf The decomposition of Brownian loop-soup clusters.}
Here we recall some of the main results in \cite{Qian-Werner} on the decomposition of loop-soup clusters. 

Let $\Gamma$ be a loop-soup in $\U$ and let  $\{\gamma_i, i\in I\}$ be the collection of the  outer boundaries of all the outer-most clusters of $\Gamma$, which is exactly the CLE coupled with it (the index set $I$ is therefore countably infinite). 
Let $O_i$ be the open domain enclosed by $\gamma_i$. Let $\Gamma_i$ be the set of all the loops in $\Gamma$ that are contained in $\overline O_i$.
Here we introduce the new notion of \emph{complete cluster}, referring to the union of an outer-most cluster $\theta$ with all the loops that are inside the domain enclosed by the outer boundary of $\theta$.
 The sets $\Gamma_i$ are the complete clusters of the loop-soup $\Gamma$.

\begin{figure}[h!]
\centering
\includegraphics[width=0.67\textwidth]{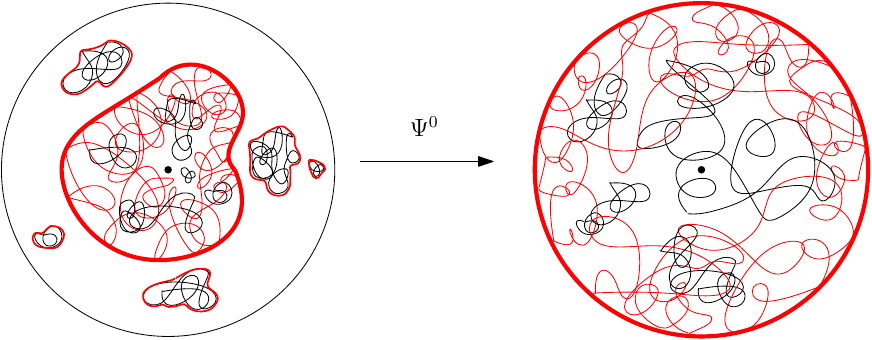}
\caption{The loops in $\Gamma^b$ are drawn in red and the loops in $\Gamma^i$ are drawn in black.}
\label{fig:intro2}
\end{figure}

\begin{figure}[ht!]
\centering
\includegraphics[trim = 20mm 60mm 20mm 60mm, clip, width=.65\textwidth]{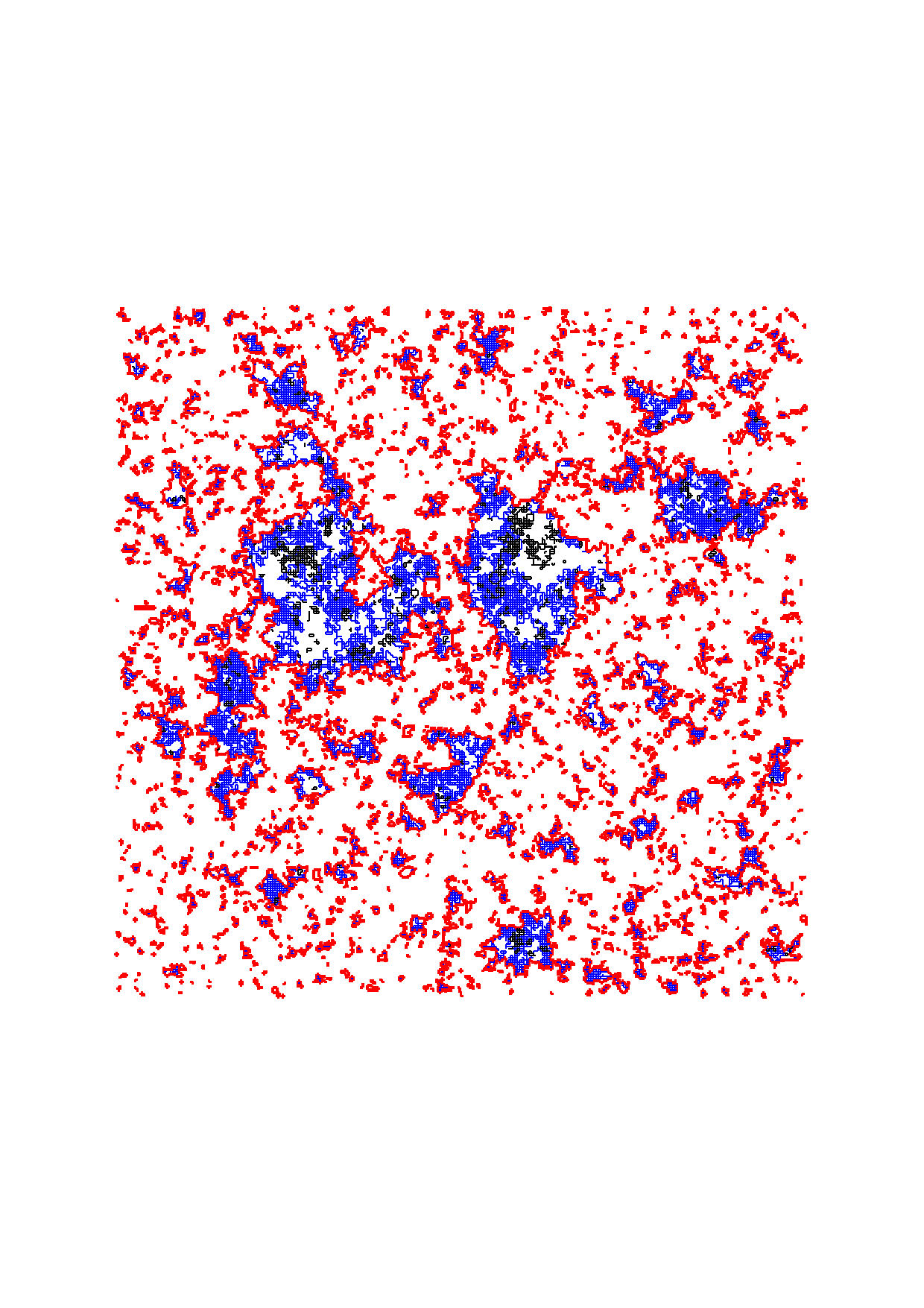}
\caption{A Brownian loop-soup, approximated by a random walk loop-soup on the square lattice with a cut-off of small loops  (this cut-off is necessary in the simulation because otherwise the loop-soup is dense in the domain). The approximation is justified by  \cite{MR2255196,MR3547746}. Also see  \cite{Lupu}.\\
The outer-boundaries of outer-most clusters are drawn in red. The boundary-touching loops are drawn in blue. The non-boundary-touching loops are drawn in black.}
\label{fig:loop-soup}
\end{figure}

It is shown in \cite{Qian-Werner} that  when one conditions $\Gamma$  on the collection $\{\gamma_i, i\in I\}$, the different complete clusters $\Gamma_i$ for $i\in I$ are independent of each other.
Let $\gamma_0\in\{\gamma_i, i\in I\}$ be the unique loop that surrounds the origin (the origin has no special role  due to conformal invariance). Let $O_0$ and $\Gamma_0$ be the domain and the complete cluster associated with $\gamma_0$.
Then we can define $\Psi_0$ to be the conformal map from $O_0$ onto $\U$ such that $\Psi_0(0)=0, \Psi_0'(0)>0$ (see Figure \ref{fig:intro2}).
The following lemma \cite[Lemma 2,3]{Qian-Werner} will be useful in this paper. 

\begin{lemma}[\cite{Qian-Werner}] \label{lem:Q-W}
The law of $\Psi_0(\Gamma_0)$ is independent of the collection $\{\gamma_i, i\in I\}$.
\end{lemma}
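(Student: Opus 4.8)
The plan is to establish the stronger statement that the conditional law of $\Psi_0(\Gamma_0)$ given the whole collection $\{\gamma_i,i\in I\}$ is almost surely one and the same deterministic law; this is clearly equivalent to the claimed independence. I would split the argument into two parts: first, that the conditional law of $\Gamma_0$ given $\{\gamma_i,i\in I\}$ depends only on $\gamma_0$ (equivalently, only on the domain $O_0$); and second, that after transporting by $\Psi_0$ the resulting law on configurations of loops in $\U$ does not depend on $O_0$ either.

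For the first part, one uses the conditional independence of the complete clusters (recalled above) together with the restriction properties. The loops making up $\Gamma_0$ are precisely the loops of $\Gamma$ contained in $\overline{O_0}$; as a sub-collection of the underlying Poisson point process, this is independent of the complementary sub-collection of loops that are not contained in $\overline{O_0}$, out of which the remaining CLE loops $\{\gamma_i\}_{i\neq 0}$ are built. The genuinely delicate point is that $\gamma_0$ itself, and the event that $O_0$ is exactly the filled outermost cluster of $\Gamma$ surrounding the origin, are not independent of $\Gamma_0$; to get around this I would realize the pair $(\{\gamma_i\},\Gamma)$ through a Markovian exploration of the CLE of the kind recalled above, run so that the loop surrounding the origin is discovered only at the very end, and combine the restriction property of the CLE (after revealing the explored path and the CLE loops encountered so far, the remaining CLE in $O_0$ is a fresh CLE in $O_0$) with the restriction property of the loop-soup (the loops of $\Gamma$ inside $O_0$ then form a loop-soup in $O_0$ conditionally independent of all of the above). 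Since $\Gamma_0$ is a measurable function of $\gamma_0$ and of the loops of $\Gamma$ in $\overline{O_0}$, its conditional law given the full CLE is then seen to be a function of $\gamma_0$ alone; write $\mu(\gamma_0,\cdot)$ for it and $\nu(\gamma_0,\cdot)=(\Psi_0)_*\mu(\gamma_0,\cdot)$.

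For the second part, the key observation is that the conditional law $\mu(\gamma_0,\cdot)$ of $\Gamma_0$ given $\gamma_0$ is intrinsic to the domain $O_0$: it is (a suitable version of) the law of the loops of a Brownian loop-soup contained in $\overline{O_0}$, conditioned on the event that these loops contain a cluster whose outer boundary is the whole of $\partial O_0=\gamma_0$ — automatically the outermost cluster surrounding the origin, with complete cluster equal to the whole configuration. The ambient loop-soup outside $O_0$ drops out because, by the loop-soup restriction property, the outside loops are independent of the inside ones, and the extra constraint carried by the CLE — that the inside cluster not be surrounded by another cluster — is, on the event that its outer boundary is already all of $\gamma_0$, an event about the outside loops whose conditional probability given the inside configuration depends on that configuration only through $O_0$. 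All notions involved (loop-soups, clusters, outer boundaries, the marked point $0$) being conformal, the map $\Psi_0:O_0\to\U$ transports this description verbatim: $\nu(\gamma_0,\cdot)$ is the law of a loop-soup in $\U$ conditioned on the fixed, conformally invariant event that it contains a cluster whose outer boundary is $\partial\U$. This does not involve $\gamma_0$, and together with the first part it yields the lemma.

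I expect the main obstacle to be the rigorous implementation of the two reductions above, which are glossed over here: one must make sense of the conditionings, which are on events of probability zero, by disintegrating along the filtration generated by the Markovian exploration; one must handle with care the loops that accumulate on $\gamma_0$, so that the correct object is ``loops contained in the closed set $\overline{O_0}$'' and one must use the conformal covariance of the Brownian loop measure including its behaviour near the boundary; and one must verify the claimed factorization of the event ``the cluster surrounding the origin is outermost'' into an inside-$O_0$ part and an outside-$O_0$ part depending on the inside only through $O_0$. This is precisely where the restriction property of the CLE and that of the loop-soup must be used in tandem, following the arguments of \cite{Qian-Werner} and the exploration techniques of \cite{MR2979861}.
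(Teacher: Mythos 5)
You should first note that the paper you are writing into does not actually prove this statement: it is imported verbatim from \cite{Qian-Werner} (cited there as Lemma 2.3), so there is no internal proof to compare against, and your sketch has to stand on its own. Its overall strategy --- show that the conditional law of $\Gamma_0$ given the CLE depends only on $\gamma_0$, describe that law intrinsically, and transport it by $\Psi_0$ using conformal invariance of the loop-soup --- is the right heuristic picture of $P_0$. But one intermediate claim is false as written, and the central step is asserted rather than proved. The parenthetical claim in your first part that, once the exploration data are revealed, ``the loops of $\Gamma$ inside $O_0$ form a loop-soup in $O_0$ conditionally independent of all of the above'' cannot be right: conditionally on $\gamma_0$ being an outermost cluster boundary, the loops of $\Gamma$ in $\overline{O_0}$ are emphatically not a loop-soup in $O_0$ --- their conditional law $P_0$ is singular with respect to $\P_{O_0}$, since it almost surely carries a cluster whose outer boundary is all of $\gamma_0$; this difference is the whole point of the lemma and of the decomposition into $\Gamma^b\cup\Gamma^i$ recalled just after it. What the restriction property of the loop-soup actually provides is a Markov property when one reveals complete clusters (boundaries together with all of their loops) attached to a deterministic set, as used repeatedly in Sections 3--4; the CLE exploration you invoke is not measurable with respect to any fixed spatial sub-collection of loops, so neither the loop-soup restriction property nor the CLE restriction property applies off the shelf to give the factorization you want.

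More fundamentally, describing $\mu(\gamma_0,\cdot)$ as ``the law of a loop-soup in $\overline{O_0}$ conditioned on the event that it contains a cluster whose outer boundary is the whole of $\partial O_0$'' begs the question: that event has probability zero, such a conditioning is not canonically defined, and the assertion that the conditional law of $\Gamma_0$ given the whole CLE admits a version which is a kernel in $\gamma_0$ alone, equal to a specific regularization of this degenerate conditioning and conformally covariant, is precisely the content of the lemma. Your proposed remedy --- disintegrating along the filtration of a Markovian exploration that discovers $\gamma_0$ last --- is the right kind of device, but then all of the substance lies in the limiting procedure: one must show that positive-probability approximations (for instance conditioning the cluster to hit a small half-disc, as in the construction of $\nu(i)$, $\nu$ and $\rho_A$ in this paper) converge, that the limit is independent of the approximation scheme, and that it transforms correctly under conformal maps, including the boundary issues you mention (loops accumulating on $\gamma_0$, clusters not touching each other for $c\le 1$). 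You explicitly defer all of this as ``the main obstacle'', so what you have is a plausible blueprint whose missing steps coincide with the actual work done in \cite{Qian-Werner} (and echoed in Sections 3--4 here), not yet a proof.
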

\begin{remark}\label{rem:p0}
We  denote the law of $\Psi_0(\Gamma_0)$ by $P_0$. The law $P_0$ is  invariant under all conformal automorphisms from $\U$ onto itself, due to the conformal invariance of the loop-soup.
\end{remark}

The set $\Psi_0(\Gamma_0)$ is the union of two  independent sets of loops:
the set $\Gamma^b$ of loops that touch $\partial\U$ and the set $\Gamma^i$ of loops that stay in the interior of $\U$ (also see Figure \ref{fig:loop-soup} for an illustration on a simulated Brownian loop-soup).
It is shown in \cite{Qian-Werner} that the two sets $\Gamma^b$ and $\Gamma^i$ are independent and the set $\Gamma^i$ is distributed like a loop-soup in $\U$. 
Moreover, for the critical intensity  $c=1$, the trace of the set $\Gamma^b$ is distributed like a Poisson point process of Brownian excursions in $\U$ having both end-points on $\partial\U$.

For small intensities, however, considerations on cut-points \cite{Qian-Werner} suggest that  $\Gamma^b$ should no longer be a Poisson point process of Brownian excursions.
It is our goal here to obtain more information on $\Gamma^b$ for $c\in(0,1)$. One result of the present paper (Proposition \ref {coro1}) is that, for all $c\in(0,1]$, the trace  of all the loops in $\Gamma^b$ satisfies a version of the conformal restriction property that we now describe.

\medbreak
\noindent
{\bf Conformal restriction.}
The conformal restriction property was first introduced and studied by Lawler, Schramm and Werner in \cite{MR1992830}. 
Note that this conformal restriction property is different from the restriction property of the CLE that we discussed earlier.

Let us first briefly recall the one-sided version of the chordal restriction property, following \cite {MR1992830}. 
Let $a,b\in\partial\U$ be two fixed boundary points.
We consider a class of random simply connected and relatively closed sets $K\subset \overline{\U}$ such that $K\cap \partial\U$ is equal to the arc from $a$ to $b$ in the clockwise direction. See Figure \ref{fig:two-point-restriction}.
Such a set (or rather, its distribution) is said to satisfy  \emph{chordal conformal restriction property} if the following two conditions hold:
\begin{itemize}
\item[(i)](conformal invariance) The law of $K$ is invariant under any conformal map from $\U$ onto itself that leave the boundary points $a$ and $b$ invariant.
\item[(ii)](restriction) For all closed sets $A\subset \overline\U$ such that $\U\setminus A$ is simply connected and that the distance from $A$ to the clockwise arc from $a$ to $b$ is strictly positive,
the conditional distribution of $\varphi_A(K)$ given $\{K \cap A=\emptyset\}$ is equal to the (unconditional) law of $K$, where $\varphi_A$ is any conformal map from $\U\setminus A$ onto $\U$ that 
leaves the points $a,b$  invariant (property (i) actually ensures that if this holds for one such map $\varphi_A$, then it holds also for any other such map). 
\end{itemize}
\begin{figure}[h]
\centering
\includegraphics[width=0.7\textwidth]{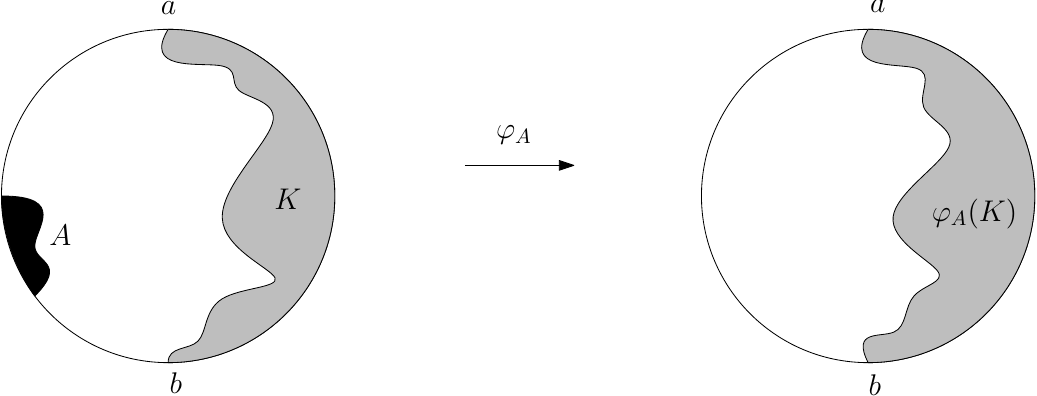}
\caption{
{Chordal restriction:  $\varphi_A$ is a conformal map from $\U\setminus A$ onto $\U$ that leaves $a,b$ invariant.}
The conditional law of $\varphi_A(K)$ 
given $\{K\cap A=\emptyset\}$ is equal to the (unconditional) law of $K$. }
\label{fig:two-point-restriction}
\end{figure}
It is proved in \cite{MR1992830} that measures that satisfy such one-sided chordal conformal restriction property can be characterized by a parameter $\alpha$ such that for all closed sets $A\subset \overline\U$ such that $\U\setminus A$ is simply connected and that the distance from $A$ to the clockwise arc from $a$ to $b$ is strictly positive,
\begin{align}\label{restriction}
\P(K\cap A=\emptyset)=\varphi_A'(a)^\alpha \varphi_A'(b)^\alpha.
\end{align}
Moreover, such measures exist if and only if $\alpha\ge 0$.
One-sided chordal restriction measures can be constructed in various ways, for example, as boundaries of Poisson point process of Brownian excursions, or using variants of the SLE$_{8/3}$ process.

The conformal restriction property can be naturally extended from the chordal case to other cases, such as the radial setting \cite{MR3293294} and the trichordal setting \cite{Qian-Trichordal}, for which the corresponding measures can also be characterized by a finite set of real parameters. 
However in other settings, the family of measures that satisfy the conformal restriction property can turn out to be much larger.

In the present paper, we will define and use another variant, the \emph{interior conformal restriction}: 
\begin{definition}\label{def:interior-restriction}
We consider a class of relatively closed random sets $K\subset\overline\U$. See Figure \ref{fig:interior-restriction}. Such a set (or rather, its distribution) is said to satisfy  \emph{interior restriction} if the following two conditions hold:
\begin{itemize}
\item[(i)](conformal invariance) The law of $K$ is invariant under any conformal map from $\U$ onto itself.
\item[(ii)](restriction) Let $U_1, U_2\subset\U$ be two connected domains such that there is a conformal map $\varphi$ from $U_1$ onto $U_2$ and that the probability of $\{K \subset U_1\}$ is non zero.  
Let $P_1$ be the conditional distribution of $K$ given $\{K \subset U_1\}$ and let $P_2$ be the conditional distribution of $K$ given $\{K \subset U_2\}$. Then the image of $P_1$ under $\varphi$ is equal to $P_2$ (property (i) actually ensures that if this holds for one such map $\varphi$, then it holds also for any other such map). 
\end{itemize}
\end{definition}

\begin{figure}[h]
\centering
\includegraphics[width=0.7\textwidth]{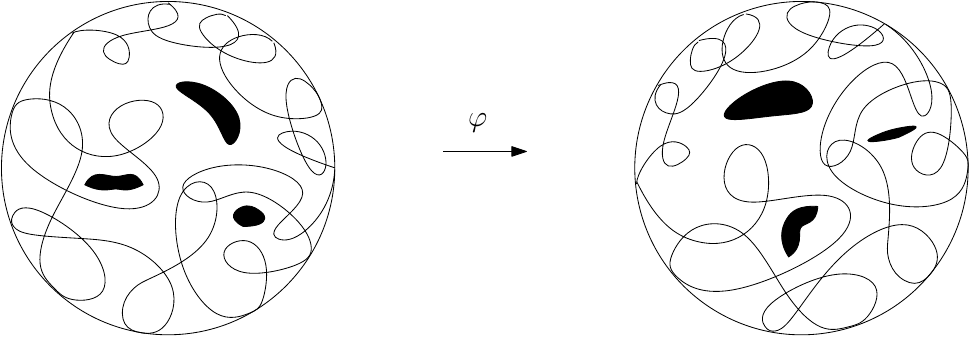}
\caption{Interior restriction. Here $U_1$ is $\U$ minus the black regions  in the left picture, and $U_2$ is the one in the right picture. The conformal map $\varphi$ sends $U_1$ onto $U_2$. The set $K$ is represented by the random curves.}
\label{fig:interior-restriction}
\end{figure}

Examples of measures that satisfy this interior restriction are given by Poisson point processes of Brownian excursions away from the boundary.

One result that we will prove in the present paper is the following: 

\begin{proposition}\label{coro1}
The set $\Gamma^b$ (defined after Lemma \ref {lem:Q-W})  satisfies interior restriction.
\end{proposition}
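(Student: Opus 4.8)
We verify the two conditions of Definition~\ref{def:interior-restriction} for the (relatively closed) trace of $\Gamma^b$. Condition~(i) is immediate: by Remark~\ref{rem:p0} the law $P_0$ of $\Psi_0(\Gamma_0)$ is invariant under every conformal automorphism $\phi$ of $\U$, and such a $\phi$ maps $\partial\U$ onto itself, hence carries the boundary-touching loops of $\Psi_0(\Gamma_0)$ to the boundary-touching loops of $\phi(\Psi_0(\Gamma_0))$; since this last configuration again has law $P_0$, the law of $\Gamma^b$ is invariant under $\phi$. For condition~(ii) one uses the parenthetical remark in Definition~\ref{def:interior-restriction}(ii) together with condition~(i) to reduce to the following: for every simply connected domain $D\subsetneq\U$ with $\P(\Gamma^b\subset D)>0$ and for one conformal map $\psi_D$ from $D$ onto $\U$, the law of $\psi_D(\Gamma^b)$ given $\{\Gamma^b\subset D\}$ equals the unconditional law of $\Gamma^b$ (equivalently, the law of $\Gamma^b$ restricted, without renormalisation, to $\{\Gamma^b\subset D\}$ and pushed forward by $\psi_D$ is $\P(\Gamma^b\subset D)$ times the full law). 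The general case of (ii) with $U_1,U_2$ simply connected then follows by composing the maps onto $\U$ and invoking (i); the remaining cases follow by a standard approximation, and in any event one may restrict attention to the domains $\U\setminus A$ with $A$ closed and $\U\setminus A$ simply connected, which form the class used in the applications.

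The plan is to obtain this identity as the endpoint of a Markovian exploration of the outer boundary $\gamma_0$ of the outermost cluster around the origin --- that is, as the special case ``$\partial$ is all of $\gamma_0$'' of the conditioning result announced in the abstract. Realise $\Gamma^b$ inside the loop-soup $\Gamma$: with $\{\gamma_i\}$ its CLE and $\gamma_0$ the loop around the origin, $\Gamma^b=\Psi_0(\Gamma_0^b)$ where $\Gamma_0^b$ is the set of loops of the complete cluster $\Gamma_0$ that touch $\gamma_0$, and $\{\Gamma^b\subset D\}=\{\Gamma_0^b\subset\Psi_0^{-1}(D)\}$. One discovers $\gamma_0$, and with it the loops attached to it, through an SLE$_\kappa$-type exploration from $\partial\U$ run jointly with the loop-soup --- a loop-soup version of the CLE explorations of \cite{MR2979861} as used in \cite{Qian-Werner}. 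At a stopping time $T$ at which one is tracing $\gamma_0$, one has revealed an arc $\partial_T\subset\gamma_0$, a hull $K_T$, and the loops of $\Gamma$ met so far; by the restriction property of the Brownian loop-soup, the loops of $\Gamma$ disjoint from $K_T$ form, conditionally, a fresh loop-soup in $\U\setminus K_T$, independent of the loops met so far, and the continuation of $\gamma_0$ is the SLE$_\kappa$ that this loop-soup defines. When the exploration closes $\gamma_0$, the loops found to touch the revealed arcs aggregate exactly to $\Gamma_0^b$. Conditioning $\Gamma^b$ on $\{\Gamma^b\subset D\}$ then amounts to conditioning this exploration never to attach to $\gamma_0$ a loop leaving $\Psi_0^{-1}(D)$; the conformal covariance of the exploration, the conformal invariance of the loop-soup in the undiscovered region at each step, and the decomposition of \cite{Qian-Werner} that makes $\Gamma^b$ and $\Gamma^i$ independent with $\Gamma^i$ a loop-soup, together turn this into the desired identity.

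The main obstacle is exactly that $\{\Gamma^b\subset D\}$ is a constraint on the \emph{entire} boundary $\gamma_0$ --- a null event to condition on --- and that both $\Gamma^b$ and the domain $\Psi_0^{-1}(D)$ involve the global, loop-soup-dependent conformal map $\Psi_0$, so that neither is a functional of the loop-soup restricted to a fixed subdomain and the loop-soup restriction property cannot be invoked directly; dealing with this through the Markovian exploration, and controlling that exploration with the tools of \cite{MR2979861}, is the technical heart of the argument. A few subsidiary points must also be checked but do not affect its structure: that the trace of $\Gamma^b$ is a.s.\ a relatively closed subset of $\overline\U$ whose complement contains a neighbourhood of the origin; that $\P(\Gamma^b\subset D)>0$ for the test domains one needs; that $D$ need not contain the origin, which is handled by placing the marked point of the exploration inside $D$; and that $\Gamma^b$, which by the $c=14/15$ transition is typically disconnected when $c$ is close to $1$, is tracked correctly regardless of its connectedness. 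Finally, in the critical case $c=1$, where by \cite{Qian-Werner} the trace of $\Gamma^b$ is a Poisson point process of Brownian excursions in $\U$ with endpoints on $\partial\U$, interior restriction also follows directly from the conformal covariance of the Brownian excursion measure and the Poissonian structure; the value of the general argument is that it treats all $c\in(0,1]$ at once.
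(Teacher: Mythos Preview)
Your overall strategy coincides with the paper's: derive the interior restriction of $\Gamma^b$ as the limiting case of Theorem~\ref{main theorem} when the discovered arc $\partial$ exhausts the whole CLE loop $\gamma_0$, i.e.\ by letting the exploration stopping time $T$ increase to the disconnection time $S$. The paper's proof is indeed little more than this.

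However, your writeup leaves a genuine gap. Theorem~\ref{main theorem} yields \emph{chordal} interior restriction for $f_T(L)$, a property with two marked boundary points $f_T(\gamma_{\sigma(T)})$ and $f_T(\gamma_T)$. As $T\uparrow S$ these two points coalesce, so in the limit one obtains interior restriction for $\Gamma^b$ with \emph{one} residual marked point on $\partial\U$ (in the paper's notation, $\varphi_S(1)$). You do not address how to eliminate this mark; your only discussion of marked points concerns the interior reference point of the exploration, not the boundary one. The paper removes it by a separate symmetry argument modelled on Lemma~\ref{lem02}: because one can run the exploration from any boundary direction, the law of $\Gamma^b$ must be invariant under automorphisms of $\U$ moving $\varphi_S(1)$ to an arbitrary boundary point, and combined with the invariance under automorphisms fixing that point (which does follow from the limit) this gives full conformal invariance and hence unmarked interior restriction. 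Without this step your sketch establishes only the marked version.

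A secondary remark: most of your second and third paragraphs re-derives the content of Theorem~\ref{main theorem} itself (independence of $L$ and $\tilde\Gamma$, the loop-soup law of $\tilde\Gamma$, the restriction property of $L$). In the paper Proposition~\ref{coro1} is placed \emph{after} the main theorem and proved \emph{from} it, so those ingredients can simply be cited; the only new work is the limit $T\to S$ and the marked-point removal described above.
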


We will not address in the present paper the issue of how large the class of interior restriction measures is.

Let us now define the notion of \emph{chordal interior restriction}, which is a variant of the interior restriction property with two additional marked points. 
\begin{definition}\label{def:chordal-interior-restriction}
For $a,b\in\partial\U$, we consider a class of relatively closed random sets $K_{a,b}\subset\overline\U$ such that $K$ do not touch the clockwise arc from $b$ to $a$. The family of sets $K_{a,b}$ (or rather, their distributions) is said to satisfy  \emph{chordal interior restriction} if the following two conditions hold:
\begin{itemize}
\item[(i)](conformal invariance) For any conformal map $\varphi$ from $\U$ onto itself, the law of $\varphi(K_{a,b})$ is equal to the law of $K_{\varphi(a),\varphi(b)}$.
\item[(ii)](restriction) Let $U_1, U_2\subset\U$ be two connected domains such that there is a conformal map $\varphi$ from $U_1$ onto $U_2$
 and that the probabilities of $\{K_{a,b} \subset U_1\}$ is non zero.  
Let $P_1$ be the conditional distribution of $K_{a,b}$ given $\{K_{a,b} \subset U_1\}$ and let $P_2$ be the conditional distribution of $K_{\varphi(a),\varphi(b)}$ given $\{K_{\varphi(a),\varphi(b)} \subset U_2\}$. Then the image of $P_1$ under $\varphi$ is equal to $P_2$.
\end{itemize}
\end{definition}
Note that chordal interior restriction implies chordal restriction. 

\medbreak
\noindent
{\bf The relation between restriction measures, CLE and SLE.}
Finally, we state a result by Werner and Wu \cite {MR3035764} which will turn out to be closely related to our result as it will be explained in the next section.
\begin{figure}[h]
\centering
\includegraphics[width=0.28\textwidth]{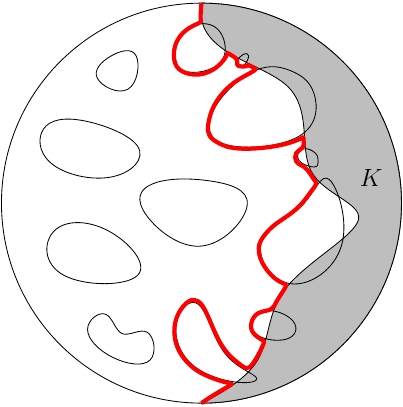}
\caption{The sample $K$ of a restriction measure, the CLE$_\kappa$ and the SLE$_\kappa (\rho)$.}
\label{result-hao}
\end{figure}

The statement goes as the following (see Figure \ref{result-hao}): Let $K$ be the sample of a one-sided restriction measure in $\U$ with exponent $\alpha$ along the clockwise arc from $i$ to $-i$. Let $\Gamma$ be an independent CLE$_\kappa$ in $\U$. Then the union of $K$ with all the loops in $\Gamma$ that it intersects has an outer boundary which is distributed like a SLE$_\kappa(\rho)$ curve. The relation between $\alpha$ and $\rho$ is given by $\alpha=(\rho+2)(\rho+6-\kappa)/(4\kappa)$.


\subsection{Main result}
We can now finally state our main result and make some comments.
Let $\Gamma$ be a loop-soup in $\U$ with intensity $c \le 1$ in the unit disk. This defines a CLE that we explore in a Markovian way as described above (for instance exploring the CLE loops that intersect $[-1,1]$). 
Suppose that $T$ is some finite stopping time for the Markovian exploration and that at this stopping time $T$, 
one is almost surely currently in the middle of tracing a CLE loop. One can then define as before the set $K_T$ and the conformal map $f_T$
 from $\U \setminus K_T$ onto $\U$ that maps 
$\gamma_T$, $-1$ and $\gamma_{\sigma (T)}$ respectively to $i$, $-1$ and $-i$ (see Figure \ref{fig:intro}).

As we have already explained, the conditional law of $\gamma$ from time $T$ up to the time at which it completes the loop that it is tracing at time $T$ is a SLE$_\kappa$ in $\U \setminus K_T$. We denote by $\eta$ the image under $f_T$ of this part of $\gamma$, which is a SLE$_\kappa$ in $\U$ from $-i$ to $i$.
We define $L$ to be the collection of loops in the loop-soup that do touch $\gamma [\sigma(T), T]$.  We denote by $\tilde \Gamma$ the collection of loops in $\U\setminus K_T$ that are not in $L$. We denote by $f_T(L)$ and $f_T(\tilde\Gamma)$ their corresponding images by $f_T$.

\begin{figure}[h!]
\centering
\includegraphics[width=0.72\textwidth]{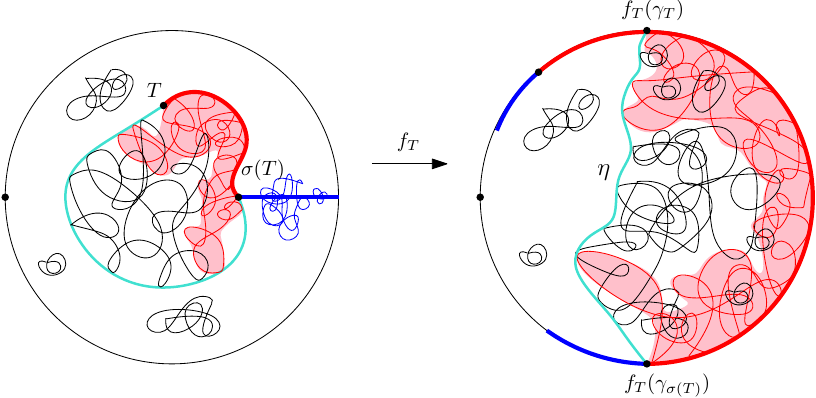}
\caption{Sketch of the loops in $L$  (red), the loops in $\tilde\Gamma$ (black), the loops in $K_T$ (blue) and their images under $f_T$. }
\label{fig:intro}
\end{figure}

\begin{theorem}\label{main theorem}
The sets $f_T(L)$ and $f_T(\tilde\Gamma)$ are independent and they are furthermore independent from $\gamma[0,T]$. Moreover, they satisfy the following property:  
\begin {itemize}
\item $f_T(\tilde\Gamma)$ is a loop-soup of intensity $c$ in $\U$.
\item The trace of the union of loops in $f_T(L)$ satisfies the one-sided chordal restriction (and a stronger restriction property that we call chordal interior restriction) with exponent $\alpha=(6-\kappa)/(2\kappa)$. 
\end {itemize}
\end{theorem}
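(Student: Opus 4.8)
The plan is to combine the decomposition machinery of \cite{Qian-Werner} with the Markovian exploration technology of \cite{MR2979861}, discovering the loop-soup in two stages so that the conditional independence statement becomes almost tautological, and then identifying the three pieces. First I would set up the right filtration: rather than only recording $\gamma[0,T]$, I want to record the whole exploration of the CLE from the boundary together with, at each moment, the information of which loops of $\Gamma$ have been forced to intersect the already-discovered portion of the CLE. At time $T$ the explored portion is $K_T$, and the exploration has only traced a part of a single CLE loop $\gamma_0$, with $\gamma[\sigma(T),T]$ being the traced piece of $\gamma_0$. By the CLE restriction axiom (and the loop-soup construction of \cite{MR2979861}), conditionally on $\gamma[0,T]$ the configuration inside $\U\setminus K_T$ is a loop-soup of intensity $c$ in $\U\setminus K_T$ \emph{conditioned} on the event that its CLE contains a loop continuing $\gamma[\sigma(T),T]$ from $\gamma_T$ to $\gamma_{\sigma(T)}$; the independence from $\gamma[0,T]$ is then the usual domain-Markov property. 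This already reduces the theorem to a statement about a single loop-soup in $\U$ with a marked SLE$_\kappa$ arc $\eta$ from $-i$ to $i$ (the image under $f_T$ of the continuation of $\gamma_0$), its associated boundary-touching complete cluster, and the remaining loops.

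Next I would separate $L$ from $\tilde\Gamma$. The loops in $\tilde\Gamma$ are exactly the loops of the loop-soup in $\U\setminus K_T$ that avoid $\gamma[\sigma(T),T]$; conditionally on $\gamma[\sigma(T),T]$ and on $L$ (equivalently, on the set of loops meeting the traced arc together with that arc's full future, i.e.\ the whole CLE loop $\gamma_0$ and its cluster), the loops that stay inside each complementary component form independent loop-soups there, by the Poissonian/restriction structure of the loop-soup exactly as in the proof of the first bullet in \cite{Qian-Werner}. Pushing forward by $f_T$ and using conformal invariance of the loop-soup, $f_T(\tilde\Gamma)$ is a loop-soup of intensity $c$ in $\U$, independent of $f_T(L)$ and of $\gamma[0,T]$. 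The somewhat delicate point is to argue that what gets recorded when we ``discover $L$'' is precisely the trace of $L$ together with the arc, and that no extra information about $\tilde\Gamma$ leaks through the conditioning on the event that the CLE loop closes up correctly; this is handled by noting that that event is measurable with respect to $L$ and the arc alone (it is the event that the cluster of the traced arc reaches $\gamma_{\sigma(T)}$ with the right topology), so conditioning on it does not couple $\tilde\Gamma$ to anything.

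The heart of the argument is the last bullet: that the trace $\mathcal{K}$ of $f_T(L)$ satisfies one-sided chordal restriction, and in fact chordal interior restriction, with $\alpha=(6-\kappa)/(2\kappa)=c(\kappa)/?$\,---\,more precisely $\alpha$ is the exponent for which $c(\kappa)$-loop-soups are built from SLE$_\kappa$. I would proceed exactly as in \cite{MR2979861}: $\mathcal{K}$ is the filling of the union of the SLE$_\kappa$ curve $\eta$ from $-i$ to $i$ with all loop-soup loops it touches and their clusters, restricted to those touching $\partial\U$ in the appropriate arc. To get restriction one tests against an admissible hull $A$ at positive distance from the relevant boundary arc and computes $\P(\mathcal{K}\cap A=\emptyset)$. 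By the Poissonian structure, $\{\mathcal{K}\cap A=\emptyset\}$ factorizes into (a) the SLE$_\kappa$ curve $\eta$ avoiding $A$, and (b) no loop-soup loop in $\U\setminus A$ that touches $\eta\cup\partial\U$ meets $A$; using the restriction-type formula for SLE$_\kappa$ (with the Brownian-bubble correction that accounts for exactly the loop-soup loops of intensity $c(\kappa)$, which is the content of the Lawler–Schramm–Werner / Werner computation underlying the $O(N)$-to-SLE dictionary), this probability equals $\varphi_A'(-i)^{\alpha}\varphi_A'(i)^{\alpha}$, and one reads off $\alpha=(6-\kappa)/(2\kappa)$. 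For the stronger chordal interior restriction one runs the same factorization but conditions on the stronger event $\{\mathcal{K}\subset U_1\}$ and uses conformal covariance of both the SLE$_\kappa$ partition function and the loop-soup intensity measure to match the two conditional laws under $\varphi$. The main obstacle, and where I would spend most care, is the bookkeeping for the loop-soup loops that are attached to $\eta$ but extend near the boundary: one must show the restriction event really does decouple into an SLE event and a loop-soup event with the Brownian-loop measure of the family of loops meeting both $\eta$ and $A$ producing exactly the $c(\kappa)$-dependent correction that combines with the SLE$_{8/3}$-type exponent to give $\alpha=(6-\kappa)/(2\kappa)$; this is precisely the estimate that, via the known value $\alpha=1$ at $\kappa=8/3$ and the loop-soup additivity of the relevant martingale, also yields the a posteriori explanation of Hao–Werner \cite{MR3035764} mentioned in the introduction.
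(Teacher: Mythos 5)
Your high-level skeleton (condition, split into boundary-touching loops versus the rest, prove restriction, identify the exponent) matches the theorem, but two of your key steps do not hold up, and they are precisely where the paper's work lies. First, the conditioning on ``being in the middle of tracing a CLE loop'' is a singular (zero-probability) conditioning, and your way of dismissing the difficulty is incorrect: you claim the event that the traced loop closes up correctly ``is measurable with respect to $L$ and the arc alone,'' but the cluster whose outer boundary is the loop being traced consists of $L$ \emph{together with the loops of $\tilde\Gamma$ that hook onto it}, so the closing-up event is not a function of $(L,\gamma[\sigma(T),T])$ only, and in any case one cannot condition on it naively. Making sense of this conditional law is exactly what the paper's pinned and glued configurations are for: the measures $\nu$, $\bar\nu$, $\bar\rho_A$, $\bar\nu_A$ are built as vague limits (Lemmas \ref{lem:nuz}, \ref{lem1}, \ref{lem:theta-cvg}), the glued law $Q^g$ is obtained as the $\delta\to0$ limit of conditioning on $E_\delta$ (Lemma \ref{lem:another-glue}) and identified with cutting along the traced boundary piece (Lemma \ref{lem:edge}), and only then do the first and second decomposition lemmas (Lemmas \ref{lem2} and \ref{lem:2nd-decomp}), via the four-collection argument of Lemma \ref{lem:restriction1} and Lemma \ref{lem:glued-decomp}, give the independence of $K$ and $\Gamma_0$, the loop-soup law of $\Gamma_0$, and the restriction property of $K$. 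Your proposal invokes ``the Poissonian/restriction structure exactly as in \cite{Qian-Werner}'' at this point, but that reference conditions on the \emph{whole} outer boundary; conditioning on only part of it is the new difficulty here and is not covered by that argument.

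Second, your identification of the exponent rests on a mischaracterization of the restriction set. The set in question is the trace of $f_T(L)$, i.e.\ only the loops attached to the image of the already-traced arc; it is \emph{not} ``the filling of the union of the SLE$_\kappa$ curve $\eta$ with all loop-soup loops it touches.'' Consequently your factorization of $\{\mathcal K\cap A=\emptyset\}$ into ``$\eta$ avoids $A$'' times a loop-soup correction is false: on that event $\eta$ (which is the boundary of $\mathcal K$ \emph{together with} the independent loop-soup clusters hooked onto it) may well enter $A$. The paper goes the other way around: once the decomposition shows that $K$ satisfies one-sided chordal restriction with some exponent $\alpha$ and is independent of a loop-soup (hence of a CLE$_\kappa$), the Werner--Wu result \cite{MR3035764} says the outer boundary of $K$ union the attached CLE clusters is an SLE$_\kappa(\rho)$ with $\alpha=(\rho+2)(\rho+6-\kappa)/(4\kappa)$; since Lemma \ref{lem:edge} and \cite{MR2979861} identify that boundary curve as a plain SLE$_\kappa$ ($\rho=0$), one reads off $\alpha=(6-\kappa)/(2\kappa)$. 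A direct computation of $\mathbf P(\mathcal K\cap A=\emptyset)$ of the kind you sketch would amount to re-deriving that correspondence and, as formulated, starts from a wrong description of the event.
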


The main part of this theorem is the second part on the conformal restriction property of the set $f_T(L)$. It is not surprising that one can deduce, using similar arguments as in \cite{Qian-Werner} and taking some care of the exploration process, that  $f_T(L)$ and $f_T(\tilde\Gamma)$ are independent from each other and from $\gamma[0,T]$, and that $f_T(\tilde \Gamma)$ is distributed as a loop-soup.
We also know by \cite{MR2979861} that $\eta$ has the law of a SLE$_\kappa$. Relating this to Werner and Wu's result \cite{MR3035764}, one would naturally guess that (the filling of) $f_T(L)$ should satisfy the one-sided conformal restriction property. 
We prove that it is indeed the case and also show a stronger version of the conformal restriction property for $f_T(L)$.  
Note that the relation between $\kappa$, $\alpha = ( 6 - \kappa)/ ( 2 \kappa)$ and $c(\kappa) = (3 \kappa -8) \alpha$ is the ``usual one'' that appears in the formulation of the SLE conformal  restriction property defect in \cite {MR1992830} and in the 
Conformal Field Theory framework ($\alpha$ is there the highest-weight of the degenerate representation of the Virasoro algebra with central charge $c$). 

Note that in the special case $\kappa=4$, it is actually possible to adapt fairly directly the arguments of  \cite{Qian-Werner} to see that the trace of $f_T(L)$ has the same law as the trace of a Poisson point process of Brownian excursions in $\U$ away from the right-half circle; this indeed then implies the conformal restriction property described in our theorem.

When $\kappa \not=4$, our result does not describe fully the conditional law of $K$, but it does nevertheless imply a number of its features. 
One interesting fact that is worth stressing is the following phase transition that occurs at $c= 14/15$ for the structure of loop-soup clusters. Consider the set $\Gamma^b$ as defined after Lemma \ref {lem:Q-W}. 
Then, 
\begin{itemize}
\item
When $c\in (14/15,1]$, which corresponds to $\kappa\in(18/5,4]$ and $\alpha\in[1/4,1/3)$, the union of loops in $f_T(L)$ almost surely has cut-points seen from $-1$, due to the fact that a one-sided chordal restriction measure of exponent $\alpha$ almost surely has cut-points when $\alpha<1/3$ (see \cite{MR1992830, MR2178043}).
Since there is no cut-point of a Brownian motion that is also a double point (\cite{MR1062056}), the cut-points of the set $f_T(L)$ cannot belong to any loop. 
Therefore, the loops in $f_T(L)$ alone almost surely do not hook up into a single cluster. This implies that the loops in $\Gamma^b$  do almost surely not hook up into  a single cluster by themselves either. It is only thanks to the loops that are at positive distance from the boundary of the cluster that the loops in $\Gamma^b$ are connected into the same cluster. As this occurs with probability one, this shows 
that for any $\eps$, the inside loops of size smaller than $\epsilon$ are alone almost surely sufficient to connect all the loops of $f_T (L)$ into a single cluster. 

\item
When $c\in(0,14/15]$, which corresponds to $\kappa\in(8/3,18/5]$ and $\alpha\in[1/3, 5/8)$,   then the set $f_T(L)$ has no cut-point seen from $-1$. This implies 
that the loops in $\Gamma^b$ alone do almost surely hook up into a single cluster. Indeed, if this were not the  case, then the set $f_T(L)$ would have cut-points with positive probability.

\end{itemize}


Let us also mention that the measure $\mu^{\text{loop}}$ on a single Brownian loop can in some sense be seen as the limit of a loop-soup with an intensity tending to zero. In  Section \ref{sec2}, we will prove a similar decomposition-restriction result for one Brownian loop. This will be our Proposition \ref{prop:loop} and it can be thought of as the limit as $c \to 0$ of our other results.

Finally, the main body of this paper consists of Sections \ref{sec:one-point} and \ref{sec:glued}. We will spend some time at the beginning of both sections to explain the CLE exploration procedures (developed in \cite{MR2979861}) and ensure that the statements can be carried through to the loop-soup setting. 
We then proceed to the decompositions of the loop-soups in the different stages of the exploration process. 
We deduce the restriction property of certain subsets of the loop-soup and ensure that this property can be carried through the successive stages.
While the exploration process concerns mainly the outer boundaries of the clusters, the decompositions thereafter depend mainly on the internal structure of the loop-soups.

\section{Notations}\label{sec:notations}
Here we give some basic definitions and  notations that we will use throughout the whole paper (nevertheless each section may also contain some locally defined notations).

\smallskip
\noindent
{\bf Important domains and sets.}
Let $\H$ denote the upper-half plane, $\U$ the unit disk and $\mathbb{C}$ the whole plane.
In the upper half-plane setting, for $a\in\R$, let $D(a,\eps)$ be the half-disk contained in $\H$, centered at $a$ with radius $\eps$. In the unit disk setting, for $a\in\partial\U$, by an abuse of notation, we continue to denote by $D(a,\eps)$ the open set in $\U$ such that its image by the map $\varphi:z\mapsto i(1+z)(1-z)$ is equal to the half-disk in $\H$ centered at $\varphi(a)$ of radius $\eps$.
Let $\mathcal{A}$ be the set of all relatively closed sets $A\subset\H$ such that $d(0,\H\setminus A)>0$. As opposed to the presentation in the introduction that was mostly done in the unit disc, we will in fact mostly choose to work in the upper half-plane. 

\smallskip
\noindent
{\bf Filling of a set.} We denote by $F(K)$ the filling of a set $K\subset\mathbb{C}$, which is the complement of the unbounded connected component(s) of $\mathbb{C}\setminus K$.

\smallskip
\noindent
{\bf The event $z\in \theta$.}
When we say that a collection $\theta$ of loops {\bf encircles} or {\bf surrounds} a point $z$, we mean that $z$ is inside the filling of the union of the loops in that collection. We denote this event by $z\in\theta$.

\smallskip
\noindent
{\bf The collection of loops $\Gamma$ as a subset of the plane.}
Let $\Gamma, \tilde \Gamma$ be two collections of loops and let $A$ be some subset of the complex plane.
 We mean by $\Gamma\cap \tilde\Gamma$ the collection of loops which is the intersection of $\Gamma$ and $\tilde\Gamma$. However, sometimes we  abuse the notation by writing things such as $\{\Gamma\cap A=\emptyset\}$, where $\Gamma$ is identified to a subset of the complex plane which is equal to the union of all the loops in $\Gamma$. 
The meaning of $\Gamma$ will be clear from context.

\smallskip
\noindent
{\bf Complete cluster in $\Gamma$.}
Given a collection $\Gamma$ of loops in some simply connected domain $D$, a complete cluster in $\Gamma$ is defined to be a collection of loops that consists of the loops in an outer-most cluster $C$ of $\Gamma$ and of all the loops that are contained in the domain enclosed by the outer boundary of $C$.
When we say that $\theta$ is a complete cluster without giving the bigger collection $\Gamma$, we mean that $\theta$ is a complete cluster in $\theta$.

\smallskip
\noindent
{\bf Loop configurations in $\H$.}
Let $\theta$ be a complete cluster contained in $\H$.  Let $\Gamma$ be a collection of loops that are all contained in the domain $\H\setminus F(\theta)$. Then we say that $(\theta,\Gamma)$ is  a \emph{loop configuration in $\H$}.
We often make an abuse of notation by identifying the pair $(\theta,\Gamma)$ with the collection $\theta\cup\Gamma$. The meaning will be clear in the circumstances.

\smallskip
\noindent
{\bf One-point pinned configuration.} 
A loop configuration $(\theta,\Gamma)$ in $\H$ is said to be a \emph{one-point pinned configuration} if $\overline\theta\cap\R=\{0\}$.

\smallskip
\noindent
{\bf Two-point pinned configuration.}
A loop configuration $(\theta,\Gamma)$ in $\H$ is said to be a \emph{two-point pinned configuration} if $\overline\theta\cap\R=\{0,1\}$.

\smallskip
\noindent
{\bf Glued configuration.}
A loop configuration $(\theta,\Gamma)$ in $\H$ is said to be a \emph{glued configuration} if $\overline\theta\cap\R=[0,1]$.

\smallskip
\noindent
{\bf The collections of loops $C_D$ and $C^D$.}
For a collection $C$ of loops, $C_D$ with subscript $D$ refers to the collection of loops in $C$ that are in $D$ and $C^D$ with  superscript $D$ refer to the collection of loops in $C$ that intersect $D$.

\smallskip
\noindent
{\bf The collection of loops $f(\Gamma)$.}
Let $O_0\subset O_1$. Let $f$ be a conformal map from an open domain $O_0$ onto another open domain $O_2$. Let $\Gamma$ be a collection of loops in $\overline O_1$. We denote by $f(\Gamma)$ the image of the set $\Gamma_0$ under $f$, where $\Gamma_0$ is the collection of loops in $\Gamma$ that are contained in $\overline O_0$.

\smallskip
\noindent
{\bf Important measures.}
Let $P^0$ denote the measure on the collection of interior loops in a complete cluster, see Remark \ref{rem:p0}.
Let $\P_D$ denote the measure of a loop-soup in $D$. Let $\P^D$ denote the measure of a loop-soup in $\H$ restricted to those loops that touch $D$.
Other  measures such as $\nu(i), \nu, \bar\nu, \nu_A, \bar\nu_A, \rho_A, \bar\rho_A$ and so on will be defined later in the paper.
For all random collections of loops $\pi$, we denote by $P\pi$ the measure of $\pi$.

\smallskip
\noindent
{\bf The map $\Phi$.}
For a simple loop $\gamma$, a point $z$ enclosed by the loop, and a closed set $K\subset \overline\U$, let $\Phi$ be the map that maps $(\gamma, z, K)$ to the set $\phi(K)$ where $\phi$ is the conformal map from  $\overline\U$ onto the closure of the domain enclosed by $\gamma$ such that $\phi(0)=z, \phi'(0)>0$. 
In this paper, we will frequently apply the map $\Phi$ to a random triple $(\gamma, z, K)$  where the law of $K$ is invariant under all conformal automorphism from $\U$ onto itself. We will further choose $\gamma,z$ independently of $K$ so that the law of $\Phi(\gamma,z,K)$ does not depend on the choice of $z$. In this case, when we are talking about the laws, we do not need to give precision on the point $z$ and we can talk about the image under $\Phi$ of the law of $(\gamma, K)$.

\smallskip
\noindent
{\bf The map $\Phi^{\text{up}}$.} We define the map $\Phi^{\text{up}}$ which upgrades a complete cluster to a configuration in $\H$. Let $\theta$ be a complete cluster and let $\Gamma$ be some collection of loops in $\H$. Let $\Phi^{\text{up}}(\theta,\Gamma)$ be equal to the configuration $(\theta, \tilde\Gamma)$, where $\tilde \Gamma$ is the restriction of $\Gamma$ to the loops that are contained in the domain $\H\setminus F(\theta)$.

\section{The case of a single Brownian loop}\label{sec2}
In this section, as a warm-up, we discuss the case of a single Brownian loop, which can be seen as the limit of a loop-soup cluster when the intensity of the loop-soup tends to zero.  Our main result 
will be Proposition~\ref{prop:loop}, which can be interpreted as the $c\to 0^+$ limit of Proposition \ref{coro1}. The case of a single Brownian loop is much simpler than the loop-soup case because the cluster and the boundary-touching loops in the cluster are just the single Brownian loop itself.

We need to be careful since the Brownian loop measure $\mu^{\text{loop}}$ is an infinite measure. However it is not difficult to find  good analogies for the Brownian loop case with the  loop-soup case.
For example, the exploration process for the loop-soup and the definition of the one-point pinned complete cluster 
is analogous with the following path-decomposition of the Brownian loop measure \cite[Proposition 7]{MR2045953}
\begin{align}\label{path-decomposition}
\mu^{\text{loop}}=\frac{1}{\pi}\int_{-\infty}^\infty \int_{-\infty}^\infty \mu^{\text{bub}}_{\H+iy}(x+iy) dx dy,
\end{align}
where  $\mu^{\text{bub}}_{\H+iy}(x+iy)$ is the Brownian bubble measure rooted at $x+iy$, and contained in the upper half-plane above the horizontal line of height $y$.

In the following, we will first work with the probability measure $P^{\text{\text{bub}}}$ which is equal to the infinite measure $\mu^{\text{bub}}_\U(1)$ on the Brownian bubbles in $\U$ rooted at $1$ restricted to the loops that encircle the origin and then renormalized. Later, we will generalize the results to the $\mu^{\text{loop}}$ measure.
Let $\theta$ be a Brownian bubble with the law  $P^{\text{\text{bub}}}$. Let $\gamma$ be the outer boundary of $\theta$. Let $f_\gamma$ be the conformal map sending the domain enclosed by $\gamma$ onto $\U$ that leaves the points $0,1$ invariant.
We first prove the following lemma.

\begin{lemma}\label{lem0}
The sets $f_\gamma(\theta)$ and $\gamma$ are independent from each other.
\end{lemma}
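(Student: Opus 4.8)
The plan is to use the Brownian loop-soup / restriction structure behind the Brownian bubble measure, exactly in the spirit of the decomposition in \cite{Qian-Werner}. The key point is that the outer boundary $\gamma$ of a Brownian bubble rooted at $1$ is (up to the time-reversal and conditioning on surrounding $0$) closely related to an $\mathrm{SLE}_{8/3}$-type object, and that the Brownian bubble itself can be recovered from $\gamma$ by adding, inside the domain $D_\gamma$ enclosed by $\gamma$, an independent conditioned loop-soup of intensity $c=0$ — i.e. nothing but a Poisson point process of Brownian excursions in $D_\gamma$ attached to $\gamma$. More precisely, I would first recall (or re-derive from \eqref{path-decomposition} and the Lawler--Schramm--Werner description of the Brownian bubble in \cite{MR1992830, MR2023758}) that a Brownian bubble in $\U$ rooted at $1$ decomposes as its outer boundary $\gamma$ together with a conditionally independent ``interior filling''; conditionally on $\gamma$, this filling is a Poisson point process of Brownian excursions in $D_\gamma$ (reflected off $\gamma$), whose law depends on $\gamma$ only through the domain $D_\gamma$ and is conformally covariant. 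Call this conditional law $\mathcal{E}_{D_\gamma}$.

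Granting this, the lemma is essentially immediate. Let $f_\gamma:D_\gamma\to\U$ be the conformal map fixing $0$ and $1$. Then $\theta = \gamma \cup (\text{interior filling})$, and by the decomposition above the conditional law of $\theta$ given $\gamma$ is that of $\gamma$ together with a sample from $\mathcal{E}_{D_\gamma}$. Applying $f_\gamma$, and using the conformal invariance of the excursion Poisson point process (the intensity measure of Brownian excursions transforms covariantly, but the normalized/conditioned law is conformally invariant since we are only conditioning on the domain), we get that the conditional law of $f_\gamma(\theta)$ given $\gamma$ is: the image of $\gamma$ under $f_\gamma$, namely the fixed loop $\partial\U$, together with an independent sample from $\mathcal{E}_{\U}$. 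In other words, the conditional law of $f_\gamma(\theta)$ given $\gamma$ does not depend on $\gamma$ at all — it is always the law of $\partial\U$ union an independent $\mathcal{E}_\U$. Hence $f_\gamma(\theta)$ is independent of $\gamma$, which is the claim. (One small bookkeeping point: the definition of $f_\gamma(\theta)$ as a set includes the curve $\gamma$ mapped to $\partial\U$, so the ``fixed part'' $\partial\U$ is harmless and does not destroy independence; alternatively one notes $f_\gamma(\theta)$ is determined by the interior excursions alone together with the deterministic set $\partial\U$.)

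I would organize the write-up as: (1) state the outer-boundary / interior-filling decomposition of the Brownian bubble $P^{\mathrm{bub}}$, citing \cite{MR1992830, MR2023758, MR2045953} and pointing out it is the $c\to 0$ analogue of the loop-soup cluster decomposition of \cite{Qian-Werner}; (2) record the conformal covariance of the Brownian excursion Poisson point process, so that conditioning only on the domain gives a conformally invariant family $\{\mathcal{E}_D\}$; (3) conclude via the change of variables $f_\gamma$ that the conditional law of $f_\gamma(\theta)$ given $\gamma$ is a fixed law, hence independence.

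The main obstacle is step (1): making precise and rigorous the statement that, conditionally on its outer boundary $\gamma$, a Brownian bubble rooted at $1$ (conditioned to surround $0$) is its outer boundary together with an independent Poisson point process of Brownian excursions in $D_\gamma$. This is morally the $c=0$ case of the loop-soup cluster analysis in \cite{Qian-Werner}, but for the single Brownian loop one must deal directly with the infinite measure $\mu^{\mathrm{loop}}$ (or $\mu^{\mathrm{bub}}$) rather than a probability measure, and with the fact that the ``cluster'' is just one loop. I expect the cleanest route is to use the known identification of the Brownian bubble as the filling of an $\mathrm{SLE}_{8/3}$ bubble decorated by Brownian loops / excursions (the restriction-measure side of \cite{MR1992830}), together with the fact that the relevant restriction exponent is $\alpha=5/8$ and the additive structure of restriction samples: an $\alpha$-restriction sample can be built as $\mathrm{SLE}_{8/3}$ plus an independent loop-soup / excursion process, and its outer boundary is the $\mathrm{SLE}_{8/3}$, so that conditioning on the outer boundary leaves precisely the independent decoration. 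Once this decomposition is in hand, the rest of the argument is the short conformal change-of-variables above.
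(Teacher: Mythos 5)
Your argument hinges entirely on step (1): that conditionally on its outer boundary $\gamma$, the Brownian bubble $\theta$ is $\gamma$ together with an \emph{independent Poisson point process of Brownian excursions} in the enclosed domain $D_\gamma$, with a law depending on $\gamma$ only through $D_\gamma$. This step is not only unproven (you flag it yourself as the main obstacle), it is in fact false, and the present paper is largely motivated by its failure. The Poissonian description of the boundary-touching part given the outer boundary is a special feature of the \emph{critical} loop-soup $c=1$ (\cite{Qian-Werner}); the single Brownian loop is the $c\to 0^+$ analogue, where cut-point considerations rule it out. Concretely, as discussed at the end of Section \ref{sec2} (see Figure \ref{fig:loop-cut}), the Brownian loop has with positive probability points of its outer boundary that are also on the boundary of the complementary component containing the origin, whereas the union of $\gamma$ with an independent Poisson point process of excursions almost surely has no such points; so the conditional law of the interior filling given $\gamma$ cannot be that of a Poisson point process of excursions. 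Your fallback route via restriction measures runs into the same problem: knowing that the outer boundary of $\theta$ has the SLE$_{8/3}$ law (\cite{MR2350053}), or that restriction samples can be \emph{built} as a simple curve decorated by independent loops or excursions, does not yield that the conditional law of the decoration \emph{given the outer boundary of the union} is the unconditional decoration law. Note also that the weakened form of your step (1) that would suffice for the argument --- ``the conditional law of the filling given $\gamma$ depends on $\gamma$ only through $D_\gamma$ and transforms conformally'' --- is, after mapping by $f_\gamma$, precisely the statement of Lemma \ref{lem0}, so assuming it would be circular; and Proposition \ref{prop:loop} deliberately claims only the much weaker interior restriction property of $P^{\text{int}}$, not any explicit Poissonian structure.

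The actual proof is much softer and does not require any description of the conditional law: it uses only the (boundary) conformal restriction property of $P^{\text{bub}}$. For a closed $A\subset\overline\U$ with $\U\setminus A$ simply connected and $d(0,A)>0$, $d(1,A)>0$, conditionally on $\{\theta\cap A=\emptyset\}$ the law of $\varphi_A(\theta)$ is the unconditional law of $\theta$, where $\varphi_A$ fixes $0$ and $1$; on that event one has $f_\gamma(\theta)=f_{\varphi_A(\gamma)}(\varphi_A(\theta))$ because $f_{\varphi_A(\gamma)}\circ\varphi_A$ is a conformal map from $D_\gamma$ onto $\U$ fixing $0$ and $1$, hence equals $f_\gamma$. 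Therefore the conditional law of $f_\gamma(\theta)$ given $\{\theta\cap A=\emptyset\}$ equals its unconditional law for every such $A$; since these events are determined by $\gamma$ and generate its $\sigma$-field, $f_\gamma(\theta)$ is independent of $\gamma$. If you want to salvage your write-up, replace your step (1) by this restriction argument; the conformal change of variables you describe in step (3) is then unnecessary.
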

\begin{proof}
This is a consequence of the (boundary) conformal restriction property of the measure $P^{\text{\text{bub}}}$.
For all closed sets $A\subset\U$ such that $\U\setminus A$ is simply connected, $d(0,A)>0, d(1,A)>0$, conditionally on the event $\{\theta\cap A=\emptyset\}$, the law of $\varphi_A(\theta)$ is the same as the (unconditional) law of $\theta$, where $\varphi_A$ is the conformal map from $\U\setminus A$ onto $\U$ that leaves the points $0,1$ invariant.
Note that on the event  $\{\theta\cap A=\emptyset\}$, the sets $f_\gamma(\theta)$ and $f_{\varphi_A(\gamma)}(\varphi_A(\theta))$ are the same.
Therefore  $f_\gamma(\theta)$ conditionally on $\{\theta\cap A=\emptyset\}$ has the same law as $f_{\varphi_A(\gamma)}(\varphi_A(\theta))$  conditionally on $\{\theta\cap A=\emptyset\}$ which is equal in law to the (unconditional)  $f_\gamma(\theta)$. Since this is true for all such $A$, the lemma is proved.
\end{proof}

We denote by $P^{\text{int}}$ the law of  $f_\gamma(\theta)$. The law $P^{\text{int}}$ depends a priori on the marked points $0$ and $1$. However, we will show that it is not the case and that $P^{\text{int}}$ is invariant under all conformal automorphisms from $\U$ onto itself.

\begin{figure}[h!]
\centering
\includegraphics[width=0.95\textwidth]{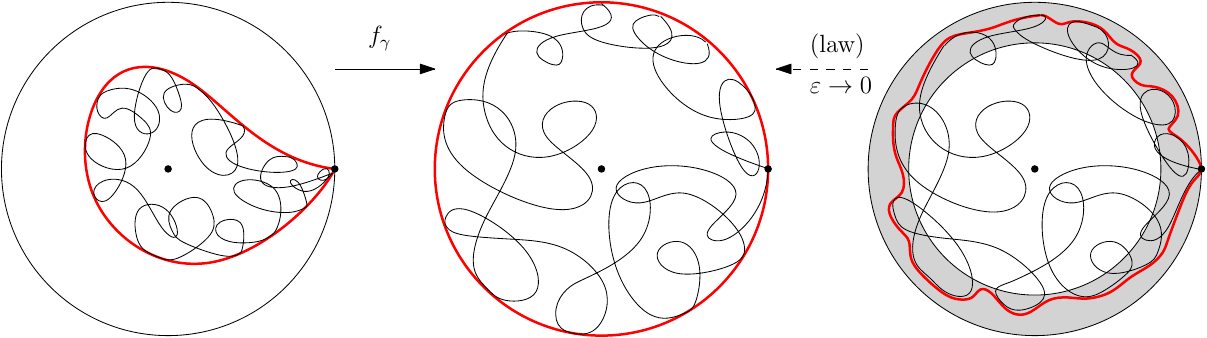}
\caption{The loop $\theta$ conditioned on the event $E_\eps $ converges in law to $f_\gamma(\theta)$.}
\label{fig:loop}
\end{figure}

\begin{lemma}\label{lem01}
The law $P^{\text{int}}$ is invariant under all conformal automorphisms from $\U$ onto itself that leave the point $1$ invariant.
\end{lemma}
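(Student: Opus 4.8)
The plan is to combine the conformal covariance of the Brownian bubble measure $\mu^{\mathrm{bub}}_{\U}(1)$ with the independence supplied by Lemma~\ref{lem0}, via a zooming-in argument. For $w\in\U$, let $P^{\mathrm{bub}}_{w}$ denote the probability measure obtained from $\mu^{\mathrm{bub}}_{\U}(1)$ by restricting to the (positive, finite-mass) event that the bubble surrounds $w$ and renormalizing; thus $P^{\mathrm{bub}}=P^{\mathrm{bub}}_{0}$. Fix a conformal automorphism $\psi$ of $\U$ with $\psi(1)=1$ and put $w:=\psi(0)$. Since $\psi$ fixes the root, conformal covariance of the bubble measure makes $\psi_{*}\mu^{\mathrm{bub}}_{\U}(1)$ a constant multiple of $\mu^{\mathrm{bub}}_{\U}(1)$; the constant disappears upon renormalizing, and since $\{0\in\theta\}$ translates into $\{w\in\psi(\theta)\}$, the law of $\psi(\theta)$ equals $P^{\mathrm{bub}}_{w}$. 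This is the only place conformal covariance enters; the remainder produces $\psi_{*}P^{\mathrm{int}}=P^{\mathrm{int}}$ out of $\psi(\theta)\sim P^{\mathrm{bub}}_{w}$ and Lemma~\ref{lem0}.

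The key input is the convergence depicted in Figure~\ref{fig:loop}: if $E_\eps$ denotes the event that the domain $F(\theta)$ enclosed by the outer boundary $\gamma$ contains $(1-\eps)\overline{\U}$ --- an event measurable with respect to $\gamma$ alone --- then a loop conditioned on $E_\eps$ converges in law, as $\eps\downarrow0$, to $f_{\gamma}(\theta)$, i.e.\ to $P^{\mathrm{int}}$. By Lemma~\ref{lem0} this is a general phenomenon: conditioning $\theta$ on any decreasing sequence of $\gamma$-measurable events that forces $F(\theta)$ to exhaust $\U$ makes $f_{\gamma}$ converge to the identity and the conditioned law of $\theta$ converge to $P^{\mathrm{int}}$.

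Now I would apply this to two families of events. Put $A_{\eps}:=\psi^{-1}\big((1-\eps)\overline{\U}\big)$, a closed disc inside $\U$ with $A_{\eps}\uparrow\U$ as $\eps\downarrow0$ and, for $\eps$ small, $\{0,w\}\subset A_{\eps}$. For such $\eps$ we have $E_\eps\subset\{0\in\theta\}\cap\{w\in\theta\}$, so conditioning either $P^{\mathrm{bub}}_{0}$ or $P^{\mathrm{bub}}_{w}$ on $E_\eps$ yields one and the same measure, namely $\mu^{\mathrm{bub}}_{\U}(1)$ conditioned on $E_\eps$. Writing $P^{\mathrm{bub}}_{w}=\psi_{*}P^{\mathrm{bub}}_{0}$ and pulling $E_\eps$ back through $\psi$ (note that $F(\psi(\theta))=\psi(F(\theta))$), this identity of measures becomes
\[
   \mathcal{L}\big(\theta \ \big|\ F(\theta)\supset(1-\eps)\overline{\U}\big)\;=\;\psi_{*}\,\mathcal{L}\big(\theta \ \big|\ F(\theta)\supset A_{\eps}\big),
\]
with $\theta\sim P^{\mathrm{bub}}$ on both sides. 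Since both conditioning events force $F(\theta)$ to exhaust $\U$ as $\eps\downarrow0$, the left-hand side tends to $P^{\mathrm{int}}$ while the right-hand side, by continuity of $\psi_{*}$, tends to $\psi_{*}P^{\mathrm{int}}$. This gives $\psi_{*}P^{\mathrm{int}}=P^{\mathrm{int}}$, which is the lemma.

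The main obstacle is the convergence of Figure~\ref{fig:loop} invoked above: one must fix a topology on random collections of loops, check that $\mu^{\mathrm{bub}}_{\U}(1)(E_\eps)$ is positive and finite, and --- the delicate part --- show that conditioning on $E_\eps$ (resp.\ on $\{F(\theta)\supset A_\eps\}$) forces $F(\theta)$ to approximate $\U$ strongly enough. The point is that the event $E_\eps$ confines $\gamma$ to the thin region between $(1-\eps)\overline{\U}$ and $\partial\U$, ruling out deep thin fjords, so that $F(\theta)$ converges to $\U$ in a strong sense and the uniformizing map $f_{\gamma}$, normalized at $0$ and at the root $1$, tends to the identity up to the boundary; this is what lets one transfer the unconditioned law $P^{\mathrm{int}}$ of $f_{\gamma}(\theta)$ to the conditioned law of $\theta$ in the limit. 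Everything else is bookkeeping with Lemma~\ref{lem0} and conformal covariance.
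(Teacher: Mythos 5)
Your argument is correct and is essentially the paper's own proof: you condition on the bubble lying in a thin neighborhood of $\partial\U$, use the conformal invariance of $\mu^{\text{bub}}_\U(1)$ under automorphisms fixing the root together with Lemma \ref{lem0} to transfer the conditioning through the automorphism, and identify both $\eps\to 0$ limits with $P^{\text{int}}$. The only difference is cosmetic: you pull the event back through $\psi$ (conditioning on $F(\theta)\supset\psi^{-1}((1-\eps)\overline\U)$) where the paper pushes forward by the map $f_z$ and conditions on $\gamma\subset\U\setminus f_z((1-\eps)\U)$.
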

\begin{proof}
Let $E_\eps$ be the event that $\gamma$ is contained in the ring $\U\setminus (1-\eps)\U$ and winds around the origin. The law of $\theta$ conditioned on the event $E_\eps$ converges to the law $P^{\text{int}}$ under the Hausdorff metric (see Figure \ref{fig:loop}), because of Lemma \ref{lem0} and the fact that the conformal map $f_\gamma^{-1}$ on the event $E_\eps$ is close to the identity map under the uniform distance. 

Let $z\in\U$ be some point different from the origin and let $f_z$ be the conformal map from $\U$ onto itself that sends the points $z,1$ to $0,1$. Choose some $\eps>0$ such that $|f_z(0)|, |z|<1-\eps$. Then the image under the map $f_z$ of  the law of $\theta$  conditionally on $E_\eps$ is equal to the law of $\theta$ conditionally on the event that $\gamma$ is contained in the domain $\U\setminus f_z((1-\eps)\U)$, due to conformal invariance of the measure $\mu^{\text{bub}}$ on Brownian bubbles. However, the law  of $\theta$ conditionally on the event that $\gamma$ is contained in the domain $\U\setminus f_z((1-\eps)\U)$ also converges to $P^{\text{int}}$ as $\eps\to 0$. 
Therefore, by taking the $\eps\to 0$ limit, we obtain that the law $P^{\text{int}}$ is invariant under $f_z$.
Since this is true for all $z$, we have proved the lemma.
\end{proof}

\begin{lemma}\label{lem02}
The law $P^{\text{int}}$ is invariant under all conformal automorphisms from $\U$ onto itself that exchange the points $e^{i\alpha}$ and $1$ for any $\alpha\in(0,2\pi)$.
\end{lemma}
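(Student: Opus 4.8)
The plan is to deduce Lemma~\ref{lem02} from Lemma~\ref{lem01} together with the rotational symmetry of the Brownian loop measure. First, observe that it suffices to show that $P^{\text{int}}$ is invariant under every rotation $R_\beta\colon z\mapsto e^{i\beta}z$ of $\U$ about the origin. Indeed, any conformal automorphism $\psi$ of $\U$ that exchanges $1$ and $e^{i\beta}$ can be factored as $\psi=R_\beta\circ h$, where $h:=R_{-\beta}\circ\psi$ fixes the point $1$; hence $\psi_\ast P^{\text{int}}=(R_\beta)_\ast(h_\ast P^{\text{int}})=(R_\beta)_\ast P^{\text{int}}$ by Lemma~\ref{lem01}, so that $\psi_\ast P^{\text{int}}=P^{\text{int}}$ as soon as $R_\beta$ preserves $P^{\text{int}}$. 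Thus the statement reduces to the rotational invariance of $P^{\text{int}}$, which is the natural thing to aim for (it is what one expects from the conformal invariance of the loop-soup, and it becomes visible a posteriori once $P^{\text{int}}$ is identified with a rotationally invariant Poisson point process of Brownian excursions off $\partial\U$).

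To prove rotational invariance I would revisit the limiting description of $P^{\text{int}}$ used for Lemma~\ref{lem01}: there, $P^{\text{int}}$ is the weak limit, as $\eps\to0$, of the law of $\theta\sim P^{\text{\text{bub}}}$ conditioned on the event $E_\eps$ that its outer boundary $\gamma$ lies in the ring $\U\setminus(1-\eps)\U$ (on $E_\eps$ the uniformizing map $f_\gamma$ is uniformly close to the identity, so the conditioned loop converges to $P^{\text{int}}$). In parallel, for each $\eps>0$ let $\overline M_\eps$ be the probability measure obtained by restricting $\mu^{\text{loop}}$ to loops in $\U$ that surround the origin and whose outer boundary lies in $\U\setminus(1-\eps)\U$, and normalizing; this is a genuine probability measure because only loops of diameter at least $2(1-\eps)$ are involved, and by the conformal invariance of $\mu^{\text{loop}}$ each $\overline M_\eps$ is invariant under every rotation about $0$. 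As in the proof of Lemma~\ref{lem01}, $\overline M_\eps$ converges weakly as $\eps\to0$ (the outer boundary being squeezed onto $\partial\U$, so that uniformizing it does nothing in the limit). The crux is to identify this limit with $P^{\text{int}}$: granting that, the rotational invariance of $P^{\text{int}}$ is inherited from that of the $\overline M_\eps$, and the proof is complete.

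The reason to expect $\lim_{\eps\to0}\overline M_\eps=P^{\text{int}}$ is that in the limit the position of the root of the Brownian bubble becomes invisible: the part of the bubble that ``remembers'' the root lies within distance $\eps$ of $\partial\U$ and collapses onto a single boundary point, so the squeezed bubble rooted at $1$ and the root-free squeezed Brownian loop ought to have the same limiting law. To make this precise I would use the path-decomposition~\eqref{path-decomposition} of $\mu^{\text{loop}}$ into Brownian bubbles, together with the description of $\mu^{\text{bub}}_\U(1)$ as the $\delta\to0$ limit of $\delta^{-1}$ times $\mu^{\text{loop}}$ restricted to loops that come within distance $\delta$ of $1$. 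This lets one compare $\mu^{\text{loop}}$, conditioned in the relevant way and additionally required to touch a small ball around $1$, with the same measure without that requirement; and since, for a loop surrounding $0$ whose outer boundary lies in $\U\setminus(1-\eps)\U$, that outer boundary necessarily passes within distance $\eps$ of every point of $\partial\U$, the extra constraint of passing through $1$ should become negligible as $\eps\to0$, after the $\delta^{-1}$ renormalization. Controlling the conditional laws and normalizing constants uniformly in $\eps$ is the main technical obstacle; everything else is soft, relying only on conformal invariance and the Hausdorff-metric continuity already used for Lemma~\ref{lem01}.
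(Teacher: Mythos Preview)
Your reduction to rotational invariance is clean and correct: given Lemma~\ref{lem01}, invariance of $P^{\text{int}}$ under every rotation $R_\beta$ is equivalent to the statement of Lemma~\ref{lem02}, via the factorization $\psi=R_\beta\circ h$ with $h(1)=1$. The substantive gap is the identification $\lim_{\eps\to0}\overline M_\eps=P^{\text{int}}$, which you yourself flag as the crux. You give only the heuristic that ``the root becomes invisible in the limit'' and point to the path-decomposition~\eqref{path-decomposition} as a possible tool, but you do not carry out the argument. Making this rigorous would require decomposing the free Brownian loop at its point of maximal modulus, comparing the resulting bubble in $r\U$ (for random $r\in(1-\eps,1)$) to the bubble in $\U$ rooted at $1$, and controlling the conditioning and normalizations uniformly as $\eps\to0$; this is genuine work, not a soft continuity argument. (Your parenthetical that rotational invariance ``becomes visible a posteriori once $P^{\text{int}}$ is identified with a rotationally invariant Poisson point process of Brownian excursions'' is also not available here: the end of Section~\ref{sec2} explains precisely that in the single-loop case $P^{\text{int}}$ is \emph{not} such a Poisson point process, because of cut-points on the boundary.)

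The paper's proof avoids the free-loop comparison entirely by exploiting an internal symmetry of the bubble. One explores $\theta\sim P^{\text{bub}}$ along the radius $[e^{i\theta},0]$ and lets $T=\sup\{t>0:te^{i\theta}\in\theta\}$; conditionally on $T$, the bubble decomposes (by a result of Lawler--Werner~\cite{MR2045953}) as two independent Brownian excursions in $\U\setminus[Te^{i\theta},e^{i\theta}]$, one from $1$ to $Te^{i\theta}$ and one from $Te^{i\theta}$ back to $1$. The two endpoints thus play symmetric roles, so the law of $f_\gamma(\theta)$ is invariant under any automorphism exchanging $1$ and $f_\gamma(Te^{i\theta})$. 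Conditioning on $E_\eps$ forces $f_\gamma(Te^{i\theta})\to e^{i\theta}$, and the lemma follows. This is shorter and uses only the $E_\eps$-limit already established for Lemma~\ref{lem01}; your route, if completed, would yield full rotational invariance directly, but at the cost of a harder limit identification.
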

\begin{proof}
We continue to use the notations of the previous lemma. Let $\theta$ be a Brownian bubble sampled with the law $P^{\text{\text{bub}}}$. We explore $\theta$ along the radius $[e^{i\alpha}, 0]$ from the boundary to the interior. Let $T=\sup\{t>0, t e^{i\alpha}\in\theta\}$ be the first time that we encounter the loop $\theta$. Then conditionally on $T$, the loop $\theta$ is distributed like the union of two independent Brownian excursions in the domain $\U\setminus [t e^{i\alpha}, e^{i\alpha}]$, one from $1$ to $te^{i\alpha}$ and the other from $te^{i\alpha}$ to $1$ (see \cite{MR2045953}).
The image $f_\gamma(\theta)$ then has two marked points $1$ and $f_\gamma(T e^{i\alpha})$ that play symmetric roles. The law of $f_\gamma(\theta)$ is invariant under all conformal automorphisms from $\U$ onto itself that leave the points $1, f_\gamma(T e^{i\alpha})$ invariant or exchange the two points.
We know that the law of $\theta$ conditioned on the event $E_\eps$ converges to the law $P^{\text{int}}$.
In the meantime, the point $f_\gamma(T e^{i\alpha})$ conditionally on $E_\eps$ converges a.s. to $e^{i\alpha}$ as $\eps\to 0$. Therefore we have proved the lemma.
 \end{proof}

The results above readily imply the following lemma. Let $\mu^{\text{bub,ext}}$ be the infinite measure on the outer boundary of the Brownian bubble in $\U$ rooted at $1$
(which is equal to the measure on SLE$_{8/3}$ rooted loops, up to normalizing constant, see \cite{MR2350053}). 
Recall the map $\Phi$ defined in \S\,\ref{sec:notations}.
\begin{lemma}
The measure $P^{\text{int}}$ is invariant under all conformal automorphisms from $\U$ onto itself. The measure $\mu^{\text{bub}}_\H(1)$ is equal to the image of $\mu^{\text{bub,ext}}\otimes P^{\text{int}}$ under the map $\Phi$.
\end{lemma}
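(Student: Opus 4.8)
The plan is to derive the first assertion from Lemmas~\ref{lem01} and~\ref{lem02} by a short group-theoretic argument, and the second from Lemma~\ref{lem0} by unwinding the outer-boundary/interior decomposition of the bubble measure that that lemma provides, with the first assertion used to remove a harmless normalization ambiguity in the map $\varphi$.

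\textbf{Conformal invariance of $P^{\text{int}}$.} Let $\mathcal G$ be the set of conformal automorphisms $\phi$ of $\U$ with $\phi_*P^{\text{int}}=P^{\text{int}}$. Since the pushforward of $\psi\circ\phi$ is the composition of the pushforwards and a pushforward is invertible, $\mathcal G$ is a subgroup of $\mathrm{Aut}(\U)$. By Lemma~\ref{lem01} it contains the stabilizer $\mathrm{Stab}(1)$ of the boundary point $1$, and by Lemma~\ref{lem02} it contains, for each $\theta\in(0,2\pi)$, an automorphism exchanging $1$ and $e^{i\theta}$; in particular $\mathcal G$ acts transitively on $\partial\U$. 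Now any subgroup of $\mathrm{Aut}(\U)$ that contains $\mathrm{Stab}(1)$ and acts transitively on $\partial\U$ is all of $\mathrm{Aut}(\U)$: given $g\in\mathrm{Aut}(\U)$ choose $h\in\mathcal G$ with $h(1)=g(1)$, so that $h^{-1}g\in\mathrm{Stab}(1)\subseteq\mathcal G$ and hence $g=h\,(h^{-1}g)\in\mathcal G$. Thus $\mathcal G=\mathrm{Aut}(\U)$, which is the first claim.

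\textbf{Decomposition of the bubble measure.} I would first establish the statement at the level of the probability measure $P^{\text{bub}}$ and then restore the (infinite) normalization. Let $\theta\sim P^{\text{bub}}$, let $\gamma$ be its outer boundary, and recall from Lemma~\ref{lem0} that $\gamma$ and $f_\gamma(\theta)$ are independent with $f_\gamma(\theta)\sim P^{\text{int}}$; hence $P^{\text{bub}}$ is the image of $(\text{law of }\gamma)\otimes P^{\text{int}}$ under $(\gamma,K)\mapsto f_\gamma^{-1}(K)$, where $f_\gamma^{-1}$ is the conformal map from $\U$ onto the domain $O_\gamma$ enclosed by $\gamma$ fixing $0$ and $1$. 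Writing $f_\gamma^{-1}=\varphi(\gamma,\cdot)\circ R_\gamma$ with $\varphi(\gamma,\cdot)$ the radially normalized map from the statement and $R_\gamma$ the rotation of $\U$ about $0$ making the composition fix $1$, the first part gives $(R_\gamma)_*P^{\text{int}}=P^{\text{int}}$, so conditionally on $\gamma$ the laws of $f_\gamma^{-1}(K)$ and $\varphi(\gamma,K)$ agree for an independent $K\sim P^{\text{int}}$; thus $P^{\text{bub}}$ is equally the image of $(\text{law of }\gamma)\otimes P^{\text{int}}$ under the map $\varphi$. Finally, $P^{\text{bub}}$ is $\mu^{\text{bub}}_\U(1)$ restricted to loops encircling $0$ and renormalized, the law of $\gamma$ under $P^{\text{bub}}$ is the pushforward of that same restricted-and-renormalized measure under ``take outer boundary'', and — by the Markov/conformal restriction property of the Brownian bubble relative to its outer boundary, together with the full conformal invariance of $P^{\text{int}}$ from the first part — the conditional law of the interior given $\gamma$ is the image of $P^{\text{int}}$ under any conformal map $\U\to O_\gamma$, independently of the conditioning event. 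Removing the restriction and the renormalization, and transferring from $(\U,1)$ to $(\H,0)$ via the conformal invariance of $\mu^{\text{bub}}$ and the identification of $\mu^{\text{bub,ext}}$ with the $\mathrm{SLE}_{8/3}$ loop measure, then yields $\mu^{\text{bub}}_\H(0)=\varphi_*(\mu^{\text{bub,ext}}\otimes P^{\text{int}})$.

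\textbf{Main obstacle.} The group theory is routine. The real care goes into the last step: tracking the infinite normalizations when passing from $P^{\text{bub}}$ to $\mu^{\text{bub}}$, checking that the conditional interior law given $\gamma$ is insensitive to which loops of the bubble are retained, and — the one genuinely non-formal point — justifying that the radially normalized map $\varphi$ of the statement may replace the root-fixing map $f_\gamma^{-1}$, which is exactly the place where the full conformal invariance of $P^{\text{int}}$ established in the first part is indispensable.
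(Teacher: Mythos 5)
Your proposal is correct and takes essentially the paper's intended route: the paper gives no written proof beyond stating that the lemma follows readily from Lemmas \ref{lem0}, \ref{lem01} and \ref{lem02}, and your group-generation argument plus the decomposition from Lemma \ref{lem0} (with the rotation ambiguity in the normalizing map absorbed by the just-established invariance of $P^{\text{int}}$) is exactly that implication made explicit. The only step deserving a bit more care is the passage from the renormalized measure restricted to bubbles surrounding $0$ to the full infinite measure, which goes through by repeating the same argument with $0$ replaced by an arbitrary interior point $z$ (using the conformal covariance of $\mu^{\text{bub}}_\U(1)$ under automorphisms fixing $1$, together with the full invariance of $P^{\text{int}}$), as your sketch already indicates.
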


Before going to the Brownian loop measure $\mu^{\text{loop}}$, we also prove the following proposition.
\begin{proposition}\label{prop2.5}
The measure $P^{\text{int}}$ satisfies interior restriction (as in Definition \ref{def:interior-restriction}).
\end{proposition}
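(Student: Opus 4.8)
The plan is to deduce the interior restriction of $P^{\text{int}}$ from the boundary conformal restriction property of $P^{\text{bub}}$ recalled in the proof of Lemma~\ref{lem0}, combined with the approximation $P^{\text{bub}}(\,\cdot\mid E_\eps)\to P^{\text{int}}$ from the proof of Lemma~\ref{lem01} and the conformal invariance of $P^{\text{int}}$ obtained just above. Condition~(i) of Definition~\ref{def:interior-restriction} is exactly that conformal invariance, so only condition~(ii) requires work. Using the conformal invariance I would first reduce (ii) to a one-domain statement: for every compact $A\subset\U$ with $\U\setminus A$ simply connected and $P^{\text{int}}(K\cap A=\emptyset)>0$, the conformal map $\varphi_A\colon\U\setminus A\to\U$ normalised to fix $0$ and $1$ (as in the proof of Lemma~\ref{lem0}) sends the law of $K$ conditioned on $\{K\cap A=\emptyset\}$ to $P^{\text{int}}$ (note that for our $K$, which a.s.\ contains $\partial\U$ since it contains the image of the outer boundary of $\theta$, the event $\{K\subset U\}$ with $U=\U\setminus A$ means precisely $\{K\cap A=\emptyset\}$). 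Indeed, given this, for a general pair $U_1,U_2$ with a conformal $\varphi\colon U_1\to U_2$ one picks a uniformising map $g_2\colon U_2\to\U$, applies the one-domain statement to $U_1$ via $g_2\circ\varphi$ and to $U_2$ via $g_2$, and cancels the bijection $g_2$, obtaining $\varphi_*P_1=P_2$; and the choice of $\varphi_A$ within its $\mathrm{Aut}(\U)$-coset is immaterial by conformal invariance of $P^{\text{int}}$.

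The heart of the argument is to follow the restriction operation through the approximation of Lemma~\ref{lem01}. Recall that there $\theta\sim P^{\text{bub}}$ conditioned on $E_\eps=\{\gamma\subset\U\setminus(1-\eps)\U\}$ converges in the Hausdorff metric to $P^{\text{int}}$, because on $E_\eps$ the map $f_\gamma$ is uniformly close to the identity while $f_\gamma(\theta)\sim P^{\text{int}}$ is independent of $\gamma$; the same reasoning gives, more robustly, $P^{\text{bub}}(\,\cdot\mid F_\eps)\to P^{\text{int}}$ for any events $F_\eps$ depending on $\gamma$ alone that force $\gamma$ into a collar of $\partial\U$ whose width tends to $0$. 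Fix $A$ as above, at positive distance from $\partial\U$, from $0$ and from $1$. On one side, conditioning $P^{\text{bub}}(\,\cdot\mid E_\eps)$ additionally on $\{\theta\cap A=\emptyset\}$ and letting $\eps\to0$ yields $P^{\text{int}}(\,\cdot\mid K\cap A=\emptyset)$, using that disjointness from $A$ is a continuity event for $P^{\text{int}}$. On the other side, the conformal restriction property of $P^{\text{bub}}$ recalled in the proof of Lemma~\ref{lem0} says that $\varphi_A$ maps $P^{\text{bub}}(\,\cdot\mid\theta\cap A=\emptyset)$ to $P^{\text{bub}}$; since $A$ is away from $\partial\U$, the map $\varphi_A$ extends conformally across $\partial\U$, so it sends $E_\eps\cap\{\theta\cap A=\emptyset\}$ to an event of the type $F_{\eps'}$ above with $\eps'\asymp\eps$, whence $\varphi_A$ maps $P^{\text{bub}}(\,\cdot\mid\theta\cap A=\emptyset,\;E_\eps)$ to $P^{\text{bub}}(\,\cdot\mid F_{\eps'})$, which tends to $P^{\text{int}}$. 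Passing to the limit in both descriptions, $\varphi_A$ maps $P^{\text{int}}(\,\cdot\mid K\cap A=\emptyset)$ to $P^{\text{int}}$; this is the one-domain statement, hence (ii).

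The step I expect to be the main obstacle is the interchange of the limit $\eps\to0$ with the conditioning on $\{\theta\cap A=\emptyset\}$ and with the fixed conformal map $\varphi_A$, for the Hausdorff topology: one must check that $P^{\text{bub}}(\theta\cap A=\emptyset\mid E_\eps)$ tends to the positive quantity $P^{\text{int}}(K\cap A=\emptyset)$, that the conditioned laws and their $\varphi_A$-images converge, and that $\varphi_A(E_\eps)$ is genuinely squeezed between collars of comparable widths — all of which rests on the boundary regularity of $\varphi_A$ and on excluding a grazing contact of $K$ with $\partial A$ under $P^{\text{int}}$. Everything else is in hand: the reduction to the one-domain form is formal, and the restriction property of $P^{\text{bub}}$ is precisely the one used in the proof of Lemma~\ref{lem0}. (If $U_1$ is only multiply connected — a case the definition formally covers — the same scheme applies, now using the conformal covariance of $\mu^{\text{bub}}$ in arbitrary domains together with a version, in the presence of the obstacle $A$, of the marked-point independence established for $P^{\text{int}}$ in Lemmas~\ref{lem01} and~\ref{lem02}.)
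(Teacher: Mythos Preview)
Your approach is essentially the same as the paper's: it too invokes the (marked-point) interior restriction of $P^{\text{bub}}$, pushes it through the approximation $P^{\text{bub}}(\,\cdot\mid E_\eps)\to P^{\text{int}}$, and then removes the dependence on the marked points using the conformal invariance established in Lemmas~\ref{lem01}--\ref{lem02}. The paper simply omits the limit-interchange details you spell out (referring instead to \cite[Lemma~7.10]{Qian-Trichordal}), and states directly that $P^{\text{bub}}$ satisfies \emph{interior} restriction with marked points $0,1$ --- which is the content of the property you quote from the proof of Lemma~\ref{lem0}, so your calling it ``boundary'' restriction is only a minor terminological slip.
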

\begin{proof}
It has been proved in \cite{MR2045953} that the measure $P^{\text{\text{bub}}}$ satisfies interior restriction with the additional marked points $0$ and $1$. 
Using the fact that $P^{\text{\text{bub}}}$ conditioned on the event $E_\eps$ converges to $P^{\text{int}}$, we can verify that $P^{\text{int}}$ satisfies the same interior restriction property with marked points $0,1$.  (We omit the details here. The reader can also see \cite[Lemma 7.10]{Qian-Trichordal} for a proof in the similar spirit.)
The dependence on the points $0$ and $1$ can be ruled out, since we have already proved that $P^{\text{int}}$ is invariant under all conformal automorphisms from $\U$ onto itself.  
\end{proof}

Now we can readily extend this result to the measure  $\mu^{\text{loop}}$, thanks to the path decomposition (\ref{path-decomposition}) of the Brownian loop measure.
Let $\mu^{\text{ext}}$ be the infinite measure on the outer boundary of the Brownian loop \cite{MR2350053} (which is equal to the measure of SLE$_{8/3}$ loops). 
Then we have the following proposition.

\begin{proposition} \label{prop:loop}
There exist a probability measure $P^{\text{int}}$ on closed sets $K\subset\U$ which satisfies interior restriction (hence is invariant  under all conformal automorphisms from $\U$ onto itself), such that  $\mu^{\text{loop}}$ is the image under $\Phi$ of $\mu^{\text{ext}}\otimes P^{\text{int}}$. 
\end{proposition}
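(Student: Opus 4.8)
The plan is to reduce the statement for $\mu^{\text{loop}}$ to the already-established facts about the Brownian bubble measure $\mu^{\text{bub}}_\U(1)$ (equivalently its conditioned, renormalized version $P^{\text{bub}}$) by means of the path-decomposition \eqref{path-decomposition}. The candidate measure $P^{\text{int}}$ has in fact already been produced in the single-bubble analysis: it is the law $f_\gamma(\theta)$ introduced just before Lemma \ref{lem01}, which by Lemma \ref{lem0} is independent of the outer boundary $\gamma$, which by Lemmas \ref{lem01}--\ref{lem02} (together with the unnamed lemma after Lemma \ref{lem02}) is invariant under all conformal automorphisms of $\U$, and which by Proposition \ref{prop2.5} satisfies interior restriction. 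So the content to be proved is really the identity $\mu^{\text{loop}} = \Phi_*\!\left(\mu^{\text{ext}}\otimes P^{\text{int}}\right)$, i.e.\ that decomposing a $\mu^{\text{loop}}$-loop into (its outer boundary) and (a conformal image of its filling onto $\U$) gives a product of $\mu^{\text{ext}}$ and $P^{\text{int}}$.

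First I would record the bubble-level version of the identity: the unnamed lemma after Lemma \ref{lem02} already gives $\mu^{\text{bub}}_\U(1) = \varphi_*\!\left(\mu^{\text{bub,ext}}\otimes P^{\text{int}}\right)$, where $\mu^{\text{bub,ext}}$ is the (SLE$_{8/3}$) outer-boundary measure of a bubble rooted at $1$ and $\varphi$ is the conformal map from $\U$ onto the domain enclosed by the outer boundary fixing $0$ and the root. By conformal invariance of the Brownian bubble measure and of $P^{\text{int}}$, the same factorization holds verbatim for $\mu^{\text{bub}}_{\H+iy}(x+iy)$ with the appropriate conformal maps: the outer boundary of a bubble rooted at $x+iy$ in $\H+iy$ is distributed according to (a translate/rescale of) $\mu^{\text{bub,ext}}$ in the half-plane, and the "filling" of the bubble, transported conformally onto $\U$ fixing a reference interior point, has law $P^{\text{int}}$, independently. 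Concretely: the filling of a bubble $\theta$ equals the filling of its outer boundary, and $\Phi$ applied to $(\text{outer boundary of }\theta,\ \text{interior point},\ \text{rest of }\theta)$ recovers $\theta$ from the pair $(\text{outer boundary},\ f_\gamma(\theta))$.

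Then I would integrate. Applying the path decomposition \eqref{path-decomposition}, $\mu^{\text{loop}} = \frac1\pi\int\!\!\int \mu^{\text{bub}}_{\H+iy}(x+iy)\,dx\,dy$, and substituting the factorized form of each $\mu^{\text{bub}}_{\H+iy}(x+iy)$, the pushforward $\Phi$ commutes with the integral (it acts fiberwise on the outer-boundary/filling decomposition). The $dx\,dy$ integral over the bubble outer-boundary measures is, by the very definition of $\mu^{\text{ext}}$ via the analogous path decomposition of the Brownian loop outer boundary (or directly: $\mu^{\text{ext}}$ is the SLE$_{8/3}$ loop measure, obtained from $\mu^{\text{bub,ext}}$ rooted bubbles by the same $\frac1\pi\int\!\!\int$ unrooting/integration, see \cite{MR2350053}), precisely $\mu^{\text{ext}}$, while the $P^{\text{int}}$ factor is unaffected by the integration since it does not depend on the root $x+iy$ (only on the shape of the filling relative to its own outer boundary). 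This yields $\mu^{\text{loop}} = \Phi_*\!\left(\mu^{\text{ext}}\otimes P^{\text{int}}\right)$, and all the claimed properties of $P^{\text{int}}$ have already been established, completing the proof.

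The main obstacle is the bookkeeping around measurability and the interchange of the $\frac1\pi\int\!\!\int$ with the conformal pushforward $\Phi$, since $\mu^{\text{loop}}$, $\mu^{\text{ext}}$ and $\mu^{\text{bub}}$ are all infinite (sigma-finite) measures: one must check that the map sending a loop to the pair (outer boundary, conformally normalized filling) is measurable, that the disintegration in \eqref{path-decomposition} is compatible with this map (no loss of mass from loops whose outer boundary is not a simple SLE$_{8/3}$-type loop — a.e.\ nullity of exceptional configurations), and that the fiberwise product structure survives the unrooting. The single-bubble lemmas already did the substantive probabilistic work (restriction, conformal invariance, the excursion decomposition along a radius); what remains is to make sure these glue together under the integral, which is routine but needs care because of the infinite total mass.
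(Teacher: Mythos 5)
Your proposal is correct and follows essentially the same route as the paper: the paper also builds $P^{\text{int}}$ from the bubble measure via Lemmas \ref{lem0}--\ref{lem02} and Proposition \ref{prop2.5}, and then transfers the bubble-level factorization to $\mu^{\text{loop}}$ by integrating the path decomposition \eqref{path-decomposition} over the root, exactly as you describe (the paper states this last step only briefly, so your write-up supplies the same argument with the bookkeeping made explicit).
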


Let us now briefly comment on the decomposition of the Brownian loop, given part of its boundary, as this will provide some motivation and insight into the more general case of loop-soup clusters (the following comment corresponds to the $c \to 0$ limit of loop-soup clusters).  We will stay in this paragraph on an informal level and just explain the main ideas, and leave it to the reader as an easy exercise to turn this into rigorous statements.
It is known that the measure on the outer boundary $\gamma$ of a Brownian loop $\theta$ is a  SLE$_{8/3}$ loop measure \cite{MR2350053}. Therefore, conditionally on an appropriately discovered part $\partial$ of $\gamma$, the rest of $\gamma$ is distributed like a chordal SLE$_{8/3}$ in the complement of $\partial$, which is known to satisfy chordal restriction property with exponent $5/8$. In fact, 
$\theta$ will  satisfy a chordal interior restriction in the complement of $\partial$, due to the restriction property of the Brownian loop itself and some similar arguments as in Proposition \ref{prop2.5}.

The trace of $\theta$ is the union of a number of  excursions of the Brownian loop away from $\partial$. However, it is not distributed like a Poisson point process of Brownian excursions in the complement of $\partial$, as we now explain: For some Brownian loop $\theta$ encircling the origin,
conditionally on a part $\partial$ of the outer boundary of $\theta$, with positive probability, $\partial$ contains cut-points of $\theta$ with respect to the origin (see Figure \ref{fig:loop-cut}) i.e. points that separate the unbounded connected component of the complement of the loop from the connected component  that contains the origin (see \cite{MR1009442}). 

However, in a Poisson point process $\Lambda^{\text{BE}}$ of intensity $5/8$ (here this is with respect to the normalization so that the obtained collection satisfies chordal restriction with exponent $5/8$) of excursions away from the right side of $\partial$ in the complement of $\partial$ (see Figure \ref{fig:loop-cut-excursions}) conditioned to encircle the origin, there is a.s. no cut-point on $\partial$. This is because in $\Lambda^{\text{BE}}$, after removing  the big excursions encircling the origin, the union of all the other excursions is absolutely continuous w.r.t. $\Lambda^{\text{BE}}$ and do not have cut-points on $\partial$ (this corresponds to the fact the SLE$_{8/3}$ is a simple curve). 

\begin{figure}[h!]
\centering
\begin{subfigure}[t]{0.48\textwidth}
\centering
\includegraphics[width=0.43\textwidth]{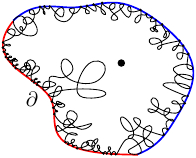}
\caption{The Brownian loop encircling the origin has positive probability to have local cut-points on $\partial$.}
\label{fig:loop-cut}
\end{subfigure}
\quad
\begin{subfigure}[t]{0.48\textwidth}
\centering
\includegraphics[width=0.43\textwidth]{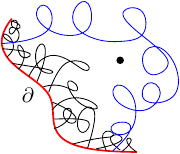}
\caption{The union of the Poisson point process of excursions with $\partial$ a.s. has no cut-points on $\partial$.}
\label{fig:loop-cut-excursions}
\end{subfigure}
\caption{Brownian loop and Poisson point process of excursions.}
\end{figure}

\section{The one-point pinned configuration}\label{sec:one-point}

Recall that the paper \cite {MR2979861} consists of two almost independent parts: 
\begin {itemize}
 \item On the one hand, it shows that the random collection of disjoint loops that satisfy a certain Markovian property can be explored progressively in a certain way, which in turn implies that they can be defined in terms of branching SLE trees. This gives a Markovian characterization of CLEs. 
 \item On the other hand, the outermost cluster boundaries of subcritical and critical Brownian loop-soup do satisfy this Markovian property. This shows the existence of the CLEs.   
\end {itemize}
One idea in the proofs in the coming sections will be to follow the exploration procedure (and the conditional distribution of the remaining to be explored configurations) of a CLE that has been defined via 
loop-soup clusters, and to study along the way the conditional distribution of the Brownian loop-soup itself (not just of the CLE) given the already explored pieces.  

In this section, we first describe the Markovian CLE-exploration on the loop-soup introduced in \cite {MR2979861} in order to define the one-point pinned configuration, and check that the decomposition ideas can be pushed through. Then we make two different decompositions of the one-point pinned configuration and thus derive the conformal restriction property of some specific subset of the one-point pinned configuration.

\subsection{Markovian exploration and the one-point pinned configuration}\label{sec:2.1}
In this section, we perform on the loop-soup the Markovian CLE-exploration which is described in detail in \cite{MR2979861}. 
The exploration process only depends on the CLE associated to the loop-soup. However we keep track of the additional loop structure inside each complete cluster.
For instance, one needs to show the convergence-type results for the metric (that we will define later) on the space of countable collections of loops.
In the following, we will describe the successive steps of the exploration process and recall some CLE-related properties for the loop-soup without providing all the proofs. In the meantime, we will clarify the specificities of the loop-soup case.

Let $\Gamma$ be the loop-soup in $\U$ which is subject to our exploration.
At this stage, we only need to perform the simple version of exploration where we discover the entire clusters immediately without tracing along their boundaries.
 Let $\theta_0$ be the complete cluster in $\Gamma$ which surrounds the origin.
We explore progressively the complete clusters in $\U$ that intersect the segment $[0,1]$ by moving from right to left. Set  $T:=\sup\{t\ge0, t\in\theta_0\}$ to be the first moment that we discover  $\theta_0$. Let $\tilde A$ be the filling  of the union of the segment $(T,1]$ with all the complete clusters that it intersects.
Let  $\Psi_0$ be the conformal map from $\U\setminus\tilde A$ onto $\U$ which sends the points $0, T$ to the points $0,1$. 
Here we  define the following probability measure on pinned complete clusters.
\begin{definition}\label{def:nui}
Let $\nu_0$ be the law of the complete cluster $\Psi_0(\theta_0)$.
\end{definition}

It is shown in \cite{MR2979861} that the map $\Psi_0$ can be approximated by the iteration of discrete small exploration maps. 
We refer the reader to \cite{MR2979861} for a complete description and proof. Here we make a brief summary of the main ideas: We fix some $\eps>0$. For each step, we discover all the complete clusters in $\Gamma$ that intersect $D(y,\eps)$, where the point $y$ is well chosen to approximate the to-be-explored part of the original segment $[0,1]$. As long as we have not discovered the complete cluster $\theta_0$, we map the connected component containing $0$ of the complement of the discovered complete clusters back to $\U$ by some conformal map $\varphi$ such that $\varphi(0)=0, \varphi'(0)>0$. Let $\Psi_0^\eps$ be the conformal map generated by the above-mentioned $\eps$-discrete exploration, namely the composition of all the maps $\varphi$ of each step up until the stopping step, together with a rotation in the end so that it maps the $y$ of the ending step back to the point $1$.
Let us note the two facts:
\begin{itemize}
\item
Each step is i.i.d. modulo the choice of $y$. Hence the law of $\Psi_0^\eps(\theta_0)$ is the same as the law of $\theta_0$ conditioned on $\{\theta_0\cap D(1,\eps)\not=\emptyset\}$.
\item
The conformal maps $\Psi_0^\eps$ converge a.s. to the map $\Psi_0$ as $\eps\to 0$.
Here we say a sequence of functions $(\Psi_n)$ converges to $\Psi$ in the sense that for all proper compact subsets $K$ of $\U$, the functions $\Psi_n^{-1}$ converge uniformly to $\Psi^{-1}$ in $K$.
\end{itemize}
As a consequence, the authors showed in \cite{MR2979861} the Proposition 4.1 that we restate below using our notations.
Here $\gamma_0$ denote the outer boundary of $\theta_0$.
\begin{proposition}[Proposition 4.1 in \cite{MR2979861}]\label{prop4.1}
As $\eps\to 0$, the law of $\gamma_0$ conditioned on the event $\{\gamma_0\cap D(1,\eps)=\emptyset\}$ converges to the law of the outer boundary of $\Psi_0(\theta_0)$ (using for instance the weak convergence with respect to the Hausdorff topology on compact sets).
\end{proposition}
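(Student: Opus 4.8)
The plan is to leverage the two facts recalled just above the statement: (a) the discrete $\eps$-exploration map $\Psi_0^\eps$ produces a cluster whose law is exactly that of $\theta_0$ conditioned on $\{\theta_0\cap D(0,\eps)\neq\emptyset\}$, equivalently the outer boundary $\gamma_0$ conditioned on intersecting $D(0,\eps)$; and (b) the maps $\Psi_0^\eps$ converge almost surely (in the Carathéodory sense spelled out, i.e.\ $(\Psi_0^\eps)^{-1}\to\Psi_0^{-1}$ locally uniformly on compacts of $\U$) to the limiting map $\Psi_0$. However, fact (a) is stated in terms of conditioning on \emph{intersecting} $D(0,\eps)$, whereas the Proposition asks about conditioning on $\{\gamma_0\cap D(0,\eps)=\emptyset\}$, the \emph{avoidance} event. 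So the first step is to reconcile these: by the conformal restriction / Markovian property of the loop-soup exploration, conditioning $\gamma_0$ to avoid $D(0,\eps)$ and then applying the conformal map $\varphi_{D(0,\eps)}$ that uniformizes $\U\setminus \overline{D(0,\eps)}$ back to $\U$ (fixing $0$ and sending the appropriate boundary point to $1$) yields the same law as conditioning $\gamma_0$ to intersect a slightly-perturbed small neighbourhood; as $\eps\to 0$ these uniformizing maps tend to the identity locally uniformly, so the two conditionings have the same limit. This is essentially the same reconciliation already used in the single-loop case (compare Lemma \ref{lem01}, where $P^{\text{\text{bub}}}$ conditioned on $E_\eps$ converges to $P^{\text{int}}$ and one freely interchanges ``contained in the ring'' with avoidance of the inner disk).

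The second step is to show that the \emph{conditioned-to-avoid} law converges. Here I would argue: on the event $\{\gamma_0\cap D(0,\eps)=\emptyset\}$, the cluster $\theta_0$ together with the segment $(T,1]$ and the complete clusters attached to it fill a hull $\tilde A^\eps$; the uniformizing map $\Psi_0^\eps$ (up to the terminal rotation) is well-defined, and by (b) it converges a.s.\ to $\Psi_0$. Since $\Psi_0$ is close to the identity away from $\tilde A$ and, crucially, the small disk $D(0,\eps)$ shrinks to the point $0$ which is interior, the difference between $\gamma_0$ conditioned to avoid $D(0,\eps)$ and its image $\Psi_0^\eps(\gamma_0)$ tends to $0$ in Hausdorff distance; and $\Psi_0^\eps(\gamma_0)$ is by construction distributed as the outer boundary of $\Psi_0^\eps(\theta_0)$, which converges to the outer boundary of $\Psi_0(\theta_0)$. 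Concretely, one writes, for a bounded Lipschitz test functional $F$ on compact sets,
\begin{align}
\bigl|\E\bigl[F(\gamma_0)\mid \gamma_0\cap D(0,\eps)=\emptyset\bigr]-\E\bigl[F(\partial\Psi_0(\theta_0))\bigr]\bigr|
\le \E\bigl[\bigl|F(\gamma_0)-F(\Psi_0^\eps(\gamma_0))\bigr|\,\big|\,\cdots\bigr]+o(1),
\end{align}
and the first term goes to $0$ because $(\Psi_0^\eps)^{-1}\to\mathrm{id}$ uniformly near $\gamma_0$ on the conditioning event. The necessary tightness/uniform-integrability to pass the conditional expectation to the limit follows from the a.s.\ convergence in (b) together with the fact that conditioning on a positive-probability event that increases to full probability is harmless.

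The third step is purely bookkeeping: check that the terminal rotation in the definition of $\Psi_0^\eps$ (matching the endpoint $y$ to $1$) converges to the corresponding rotation for $\Psi_0$ (matching $T$ to $1$), so that the marked-point structure passes to the limit — this is needed only if one wants the convergence as \emph{marked} configurations, and for the Hausdorff-convergence statement as phrased it can be omitted or handled trivially. I expect the main obstacle to be the second step, specifically making rigorous the claim that conditioning on avoidance of the shrinking disk $D(0,\eps)$ produces, in the limit, the same object as the discrete exploration conditioned to \emph{hit} $D(0,\eps)$ — i.e.\ controlling the uniformizing maps $\varphi_{D(0,\eps)}$ uniformly over the (random) configuration near $0$ and verifying there is no ``escape of mass'' as the conditioning events degenerate. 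Once that equivalence is in hand, everything else follows from the already-established a.s.\ convergence $\Psi_0^\eps\to\Psi_0$. Since this is precisely the content of Proposition 4.1 of \cite{MR2979861}, I would in the write-up largely cite that argument, indicating only the modifications needed to carry along the extra loop structure (which is immaterial here, as the statement concerns only the outer boundary $\gamma_0$).
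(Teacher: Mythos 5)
The skeleton of your steps two and three is the right one, and it is what the paper (which simply restates this as Proposition 4.1 of \cite{MR2979861}, after recalling the two bullet facts) has in mind: at each fixed $\eps$ the law of $\Psi_0^\eps(\theta_0)$ \emph{equals} the conditioned law by the i.i.d.\ structure of the discrete exploration, and the a.s.\ convergence $\Psi_0^\eps\to\Psi_0$ then upgrades this to weak convergence of the conditioned laws. The genuine problem is your first step. The conditioning event in the statement should be read as $\{\gamma_0\cap D(1,\eps)\neq\emptyset\}$, i.e.\ the \emph{intersection} of the half-disk at the starting point of the exploration: this is what the bullet immediately above the proposition and Lemma \ref{lem:def1} (and its half-plane analogue, Lemma \ref{lem:3.4}) use; the ``$=\emptyset$'' and the centre $0$ are misprints. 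Your attempted ``reconciliation'' of the avoidance event with the intersection event is not a repair of this but a false claim: conditioning $\gamma_0$ to \emph{avoid} a shrinking boundary half-disk is conditioning on an event of probability tending to $1$, so that conditional law converges (even in total variation) to the unconditional law of $\gamma_0$, which a.s.\ stays at positive distance from $\partial\U$; the intersection-conditioned law, by contrast, lives on an event of probability $u(\eps)\to 0$ and converges to the pinned law, whose loop a.s.\ touches $\partial\U$ at the marked point. The two limits are different objects, so ``the two conditionings have the same limit'' cannot hold, and the CLE restriction property does not convert one conditioning into the other. The analogy with Lemma \ref{lem01} is also misplaced: the event $E_\eps$ there forces the outer boundary to hug the \emph{entire} boundary circle, which is exactly what makes the uniformizing map close to the identity; nothing of that sort happens when one merely avoids a small half-disk.

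A second, related slip is in your step two: you assert that $\Psi_0$ is close to the identity away from $\tilde A$ and that $(\Psi_0^\eps)^{-1}\to\mathrm{id}$ uniformly near $\gamma_0$ on the conditioning event. Neither is true: $\Psi_0^\eps$ is the full composition of exploration maps up to the discovery of $\theta_0$ (plus a rotation), and it converges to $\Psi_0$, a nontrivial uniformization of $\U\setminus\tilde A$, not to the identity; in particular the conditioned loop is not Hausdorff-close to the unconditioned $\gamma_0$ --- only its \emph{law} coincides with that of $\Psi_0^\eps(\gamma_0)$. No identity-approximation is needed anywhere: fact (a) gives, for every $\eps$, the exact identity in law between $\gamma_0$ conditioned on $\{\gamma_0\cap D(1,\eps)\neq\emptyset\}$ and the outer boundary of $\Psi_0^\eps(\theta_0)$, and fact (b) gives $\Psi_0^\eps(\gamma_0)\to\Psi_0(\gamma_0)$ a.s.\ in Hausdorff distance, whence the weak convergence claimed in the proposition; this is the content of \cite{MR2979861}, which the paper cites rather than reproves. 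If you delete your step one (replacing it by the observation that the event must be the intersection event) and correct the ``close to identity'' statements in step two accordingly, what remains is essentially the intended argument.
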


Our goal is to derive an analogue of this proposition for the Brownian loop-soup, namely to show the convergence for the entire complete cluster but not only for the outer boundary of the complete cluster. For this purpose,
 we need to choose a metric on the space of countable collections of loops. 
For two finite collections of loops $\mathcal C$ and $\mathcal C'$, let us define
\begin{align*}
d_M=\min_\sigma \max_{\gamma\in\mathcal C} d(\gamma,\sigma(\gamma)) \wedge M
\end{align*}
where $d$ is the Hausdorff distance between the traces of two loops and $\min$ is taken over all bijections $\sigma$ from $\mathcal C$ to $\mathcal C'$ with the convention that $\min\emptyset=\infty$ (so that the distance $d_M$ between two collections of loops  is at most $M$).
In a Brownian loop-soup $\Gamma$ in $\U$, for $n\ge 1$, the collection $\Gamma_n$ of loops of diameters between $2^{-n}$ and $2^{-n-1}$ is a.s. finite. Let $\Gamma_0$ be the finite collection of loops of size bigger than $1/2$. We can define the distance between two loop-soups $\Gamma$ and $\Gamma'$ by
\begin{align*}
d^*(\Gamma,\Gamma')=\sum_{n=0}^\infty 2^{-n} d_M(\Gamma_n,\Gamma'_n).
\end{align*}
Under this distance $d^*$, the convergence of the conformal maps $\Psi^{\eps}_0$ as described earlier implies the convergence of $\Psi^{\eps}_0(\theta_0)$ (as a collection of loops). This is because for any fixed $\delta>0$, the collection of loops in $\Gamma$ with diameter greater than $\delta$ is finite and contained in some proper compact subset of $\U$. 
This leads to the following lemma.
\begin{lemma}\label{lem:def1}
The law of $\theta_0$ conditioned on the event $\{\theta_0\cap D(1,\eps)\not=\emptyset\}$ converges as $\eps\to 0$ to $\nu_0$.
\end{lemma}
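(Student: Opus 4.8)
The plan is to bootstrap the convergence statement from the level of the outer boundary $\gamma_0$ (Proposition \ref{prop4.1}) to the level of the entire complete cluster $\theta_0$, using the decomposition of the complete cluster established via Lemma \ref{lem:Q-W} together with the metric $d^*$ introduced just above. First I would recall that $\theta_0$ decomposes as the outer boundary $\gamma_0$ together with the collection of interior loops, and that by Lemma \ref{lem:Q-W} and Remark \ref{rem:p0}, if we apply the conformal map $\Phi$ that takes $\U$ onto the domain enclosed by $\gamma_0$ (fixing the origin, say), the image of $P^0$ is independent of $\gamma_0$ and conformally invariant. Concretely: conditionally on $\gamma_0$, the interior loops of $\theta_0$ are distributed as $\Phi(\gamma_0,0,\cdot)$ applied to an independent sample from $P^0$. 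This reduces the problem to controlling (a) the convergence of the outer boundaries and (b) the convergence of the conformal maps that transport the fixed independent interior data.

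The second step is to analyze the two conditionings. Conditioning $\theta_0$ on $\{\theta_0 \cap D(1,\eps)\neq\emptyset\}$ is the same (by the nested structure of complete clusters — the complete cluster meets $D(1,\eps)$ iff its outer boundary does) as conditioning on $\{\gamma_0\cap D(1,\eps)\neq\emptyset\}$, which after the coordinate change matches the setup of Proposition \ref{prop4.1} and of the $\eps$-discrete exploration: the law of $\Psi_0^\eps(\theta_0)$ equals the law of $\theta_0$ conditioned on $\{\theta_0\cap D(0,\eps)\neq\emptyset\}$, and by the rotational/reflection symmetry built into the exploration this conditioning is the one in the statement (with $D(1,\eps)$). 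So it suffices to show $\Psi_0^\eps(\theta_0) \to \Psi_0(\theta_0)$ in the metric $d^*$ as $\eps\to 0$, since the law of $\Psi_0(\theta_0)$ is by Definition \ref{def:nui} exactly $\nu_0$.

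The third step is the actual convergence in $d^*$, and this is where the real work lies. We already know from \cite{MR2979861} that $\Psi_0^\eps \to \Psi_0$ in the sense that $(\Psi_0^\eps)^{-1}\to \Psi_0^{-1}$ uniformly on every proper compact subset of $\U$. I would fix $\delta>0$ and split $\theta_0$ into the finitely many loops of diameter $>\delta$ and the (infinitely many) loops of diameter $\le\delta$. For the large loops: they live in a fixed compact subset of the domain enclosed by $\gamma_0$ (away from $\gamma_0$, since a large loop cannot get too close to the boundary without forcing $\gamma_0$ itself closer to $D(0,\eps)$ — or more simply, condition on $\gamma_0$ first and use that $\Psi_0^\eps$ and $\Psi_0$ agree in the limit uniformly on the relevant compact set), so uniform convergence of $(\Psi_0^\eps)^{-1}$ there gives convergence of these finitely many loops in Hausdorff distance, hence in each $d_M(\Gamma_n,\Gamma'_n)$ for $n$ with $2^{-n}>\delta$. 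For the small loops: each term $d_M(\Gamma_n,\Gamma'_n)\le M$, and the geometric weights $\sum 2^{-n}$ make the tail contribution from $n$ large as small as we wish uniformly — so choosing $\delta$ small controls the tail and then letting $\eps\to 0$ controls the head. One subtlety I would flag: one must ensure the small loops of $\theta_0$ don't concentrate near $\gamma_0$ in a way that spoils the uniform control; here I would invoke that conditionally on $\gamma_0$ the interior loops are a fixed ($P^0$-distributed) configuration pushed forward by conformal maps converging uniformly on compacts, and use a.s. tightness of the loop-soup (only finitely many loops of diameter $>\delta$, all inside a compact subset of the interior — this is exactly the property already used to define $d^*$ and cited in the paragraph preceding the lemma).

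The main obstacle I anticipate is precisely the interchange of the $\eps\to 0$ and $\delta\to 0$ limits near the boundary $\gamma_0$: the conformal maps $(\Psi_0^\eps)^{-1}$ converge uniformly only on compact subsets of $\U$, and small loops of $\theta_0$ can in principle lie arbitrarily close to $\gamma_0$, where control is weakest. Handling this cleanly requires conditioning on $\gamma_0$ and exploiting that, given $\gamma_0$, the interior configuration is a *fixed* (independent, conformally invariant) sample transported by maps that converge nicely — so that the only $\eps$-dependence enters through the transport maps, which are uniformly close to the limit on any compact set, combined with the a.s. finiteness of $\{$loops of diameter $>\delta\}$ to reduce to finitely many loops each avoiding a neighborhood of $\gamma_0$. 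Once this is arranged the estimate is routine, and the identification of the limit law with $\nu_0$ is immediate from Definition \ref{def:nui}.
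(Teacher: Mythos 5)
Your proposal takes essentially the same route as the paper: it uses the identity in law between $\Psi_0^\eps(\theta_0)$ and the conditioned cluster coming from the i.i.d.\ structure of the $\eps$-discrete exploration, and then deduces convergence in the metric $d^*$ from the a.s.\ convergence of the maps $\Psi_0^\eps$ together with the fact that, for any fixed $\delta>0$, only finitely many loops have diameter larger than $\delta$ and they lie in a proper compact subset of $\U$. Your extra appeal to Lemma \ref{lem:Q-W} to control small loops near $\gamma_0$ is a harmless additional precaution that the paper does not need, since the cap $M$ and the geometric weights in $d^*$ already reduce the matter to finitely many scales.
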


From here on, we switch to the upper half-plane setting by applying  the conformal map $\varphi: z\mapsto i(1+z)/(1-z)$, because it is convenient to have the scaling invariance.  The metric on the collections of loops in $\H$ can be taken as the image under $\varphi$ of the metric $d^*$. 
For a loop-soup $\Gamma$ in $\H$, we denote by $\theta(i)$ the complete cluster that surrounds the point $i$. We define  the conformal map $\Psi^i$ in a similar way as $\Psi_0$, but in the $\H$ setting, see Figure \ref{fig:exploration-h}. 
Definition \ref{def:nui} and Lemma \ref{lem:def1} have the following counterparts.

\begin{definition}\label{def:3.4}
Let $\nu(i)$ be the law of the one-point pinned complete cluster $\Psi^i(\theta(i))$.
\end{definition}

\begin{lemma}\label{lem:3.4}
The law of $\theta(i)$ conditioned on the event $\{\theta(i)\cap D(0,\eps)\not=\emptyset\}$ converges as $\eps\to 0$ to $\nu(i)$.
\end{lemma}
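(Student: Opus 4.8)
\textbf{Proof proposal for Lemma \ref{lem:3.4}.}

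The plan is to deduce the $\H$-statement directly from its unit-disc counterpart, Lemma \ref{lem:def1}, by transporting everything through the conformal map $\varphi: z\mapsto i(1+z)/(1-z)$. The point is that $\nu(i)$ and the conditioned law of $\theta(i)$ in $\H$ are, essentially by definition, the images under $\varphi$ of $\nu_0$ and of the conditioned law of $\theta_0$ in $\U$; the metric on collections of loops in $\H$ has been chosen precisely to be the pushforward of $d^*$ under $\varphi$, so that $\varphi$ is an isometry between the two metric spaces and weak convergence is preserved. Thus once one matches up the conditioning events on the two sides, the lemma follows from Lemma \ref{lem:def1} with no further work.

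The only genuine content, then, is to check that the events match up correctly under $\varphi$, and in particular that the ``half-disc neighbourhood'' $D(0,\eps)$ in $\H$ corresponds (up to the right kind of error) to the set $D(1,\eps')$ in $\U$ used in Lemma \ref{lem:def1}. Recall that in \S\ref{sec:notations} the set $D(a,\eps)$ in the disc was \emph{defined} as the $\varphi$-preimage of the half-disc in $\H$ of radius $\eps$ centred at $\varphi(a)$; since $\varphi(1)=\infty$, a genuinely small half-disc around $0$ in $\H$ does not correspond to a small $D(1,\eps)$ in $\U$, so one cannot simply quote Lemma \ref{lem:def1} verbatim. The clean way around this is to run the whole $\eps$-discrete exploration of \cite{MR2979861} directly in the $\H$-picture (this is exactly the exploration used in \cite{MR2979861} to build the one-point pinned configuration): at each step one discovers the complete clusters meeting a small half-disc $D(y,\eps)$ with $y$ chosen along $\R$ to approximate the to-be-explored portion, maps the component of $i$ back to $\H$ by a map fixing $i$ with positive derivative there, and iterates; call $\Psi^{i,\eps}$ the resulting conformal map. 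Two facts carry over verbatim from the disc case: each step is i.i.d.\ modulo the choice of $y$, so the law of $\Psi^{i,\eps}(\theta(i))$ equals the law of $\theta(i)$ conditioned on $\{\theta(i)\cap D(0,\eps)\neq\emptyset\}$; and $\Psi^{i,\eps}\to\Psi^i$ a.s.\ in the sense that $(\Psi^{i,\eps})^{-1}\to(\Psi^i)^{-1}$ uniformly on compact subsets of $\H$, by the same argument as in \cite{MR2979861} (conformal invariance of the loop-soup plus the convergence of the Loewner-type chains).

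It then remains to upgrade this uniform-on-compacts convergence of the inverse maps to convergence of the \emph{full collection of loops} $\Psi^{i,\eps}(\theta(i))$ in the metric (image of $d^*$ under $\varphi$). This is the step that needs a small amount of care, and I expect it to be the main — though still routine — obstacle. One argues as in the passage just before Lemma \ref{lem:def1}: fix $\delta>0$; the loops of $\theta(i)$ of diameter at least $\delta$ form a.s.\ a finite family sitting inside a fixed compact subset of $\H$ (here one uses that $\theta(i)$ is a complete cluster surrounding $i$, so its loops cannot escape to $\infty$ or degenerate at $\R$ on the relevant scale), on which $(\Psi^{i,\eps})^{-1}$, hence $\Psi^{i,\eps}$, converges uniformly; so these finitely many loops converge in Hausdorff distance to the corresponding loops of $\Psi^i(\theta(i))$. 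The loops of diameter $<\delta$ contribute at most $O(\delta)$ to each $d_M$-term (with an additional geometric factor from the $\sum 2^{-n}$), uniformly in $\eps$ small, because the loop-soup restricted to small loops is, in law, controlled by the intensity measure and the maps $\Psi^{i,\eps}$ are uniformly close to the identity away from the finitely many active neighbourhoods. Letting first $\eps\to0$ and then $\delta\to0$ gives $\Psi^{i,\eps}(\theta(i))\to\Psi^i(\theta(i))$ in the metric, i.e.\ the conditioned law of $\theta(i)$ converges weakly to the law $\nu(i)$ of $\Psi^i(\theta(i))$, which is the assertion of the lemma.
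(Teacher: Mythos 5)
Your proposal is correct and follows essentially the paper's own route: the paper gives no separate argument for Lemma \ref{lem:3.4}, presenting it as the half-plane counterpart of Lemma \ref{lem:def1}, whose proof is exactly the $\eps$-discrete exploration of \cite{MR2979861} (i.i.d.\ steps identifying the conditioned law with the law of $\Psi^{\eps}(\theta)$, convergence of the exploration maps, then the upgrade from convergence of maps to convergence of loop collections in $d^*$ using that loops of diameter $\ge\delta$ in the cluster are finitely many and lie in a compact subset), which is what you rerun directly in $\H$ rather than transporting through a M\"obius map plus rotation invariance of the disc loop-soup. The only slips are cosmetic and at the same level of detail the paper itself glosses over (the compact set containing the large loops is random, one needs the final normalization sending the last exploration point to $0$ so the conditioning set is exactly $D(0,\eps)$, and the small-loop control comes from their images under both $\Psi^{i,\eps}$ and $\Psi^i$ being small rather than from $\Psi^{i,\eps}$ being close to the identity).
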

\begin{figure}[h!]
\centering
\includegraphics[width=\textwidth]{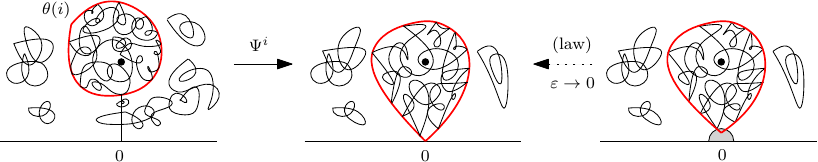}
\caption{We explore the complete cluster $\theta(i)$ which surrounds the point $i$ by moving upwards the segment $[0,i]$. }
\label{fig:exploration-h}
\end{figure}

Let us recall Lemma \ref{lem:Q-W} and the definition of the law $P_0$. Note that the event $\{\theta(i)\cap D(0,\eps)\not=\emptyset\}$ depends only on the outer boundary $\gamma(i)$ of $\theta(i)$. 
Let $\mu(i)$ denote the law of the outer boundary of $\Psi^i(\theta(i))$, which is simply the one-point pinned loop measure for CLE. 
Recall the map $\Phi$ defined in \S\,\ref{sec:notations}.
We are now ready to state the following lemma, which is a direct consequence of Lemma \ref{lem:Q-W}.
\begin{lemma}\label{lem:ind}
The measure $\nu(i)$ is the image of the product measure $\mu(i)\otimes P_0$ under the map $\Phi$.
\end{lemma}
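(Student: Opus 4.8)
The plan is to deduce this from Lemma \ref{lem:Q-W} by transferring the independence statement through the limiting procedure that defines $\nu(i)$. Recall that Lemma \ref{lem:Q-W} asserts that $\Psi_0(\Gamma_0)$ is independent of the collection $\{\gamma_j, j\in I\}$, and in particular of $\gamma_0$ itself; here $P_0$ denotes the law of $\Psi_0(\Gamma_0)$. Passing to the $\H$ setting via $\varphi$, the analogous statement is that the law of the (rescaled) interior loop structure inside the complete cluster $\theta(i)$ — that is, the image of $\Gamma_0$ under the conformal map to the unit disk fixing the center and a boundary marked point — is independent of the outer boundary $\gamma(i)$. So the content to be proved is really that $\Psi^i(\theta(i))$ decomposes as its outer boundary (with law $\mu(i)$), together with an independent copy, appropriately re-embedded via $\Phi$, of the interior configuration with law $P_0$.

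First I would make precise the relationship between $\Psi^i$ and the map $\Psi_0$ appearing in Lemma \ref{lem:Q-W}. The map $\Psi_0$ sends the domain $O_0$ enclosed by $\gamma_0$ to $\U$ (normalized at the origin), whereas $\Psi^i$ first uncovers the boundary-exploration hull $\tilde A$ and sends $\U\setminus\tilde A$ (resp.\ its $\H$-analogue) to $\U$. The key observation is that these two maps differ only by the conformal map $\Phi$ that re-embeds the standard disk into the domain enclosed by the (rescaled) outer boundary: if $\gamma$ denotes the outer boundary of $\Psi^i(\theta(i))$, then $\Psi^i(\theta(i)) = \Phi\big(\gamma,\, \Psi_0(\Gamma_0)\big)$ up to the choice of interior marked point, which by the remarks in \S\ref{sec:notations} on the map $\Phi$ does not affect the law. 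In other words, conditioning the complete cluster on the event $\{\theta(i)\cap D(0,\eps)\neq\emptyset\}$ and letting $\eps\to 0$ (Lemma \ref{lem:3.4}), the rescaled interior loops converge to a sample from $P_0$ that is independent of the limiting outer boundary (which is a sample from $\mu(i)$), precisely because Lemma \ref{lem:Q-W} gives the independence before the limit and the event being conditioned on depends only on $\gamma(i)$.

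The remaining steps are bookkeeping: (1) check that conditioning on $\{\theta(i)\cap D(0,\eps)\neq\emptyset\}$, which depends only on $\gamma(i)$, preserves the conditional independence of the rescaled interior loop collection from $\gamma(i)$ — this is immediate from Lemma \ref{lem:Q-W} since conditioning on a function of $\gamma(i)$ cannot destroy independence from $\gamma(i)$; (2) verify that the map $(\gamma, K)\mapsto \Phi(\gamma, K)$ is continuous enough (with respect to $d^*$ and the Hausdorff topology on $\gamma$) that the joint convergence of outer boundary and rescaled interior loops established in Lemma \ref{lem:3.4} and Proposition \ref{prop4.1} passes to the level of the glued object $\nu(i)$; and (3) identify the limiting law of the outer boundary as $\mu(i)$ and of the rescaled interior loops as $P_0$, with independence inherited from the prelimit. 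Assembling these gives that $\nu(i)$ is the pushforward of $\mu(i)\otimes P_0$ under $\Phi$.

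The main obstacle I anticipate is step (2): showing that the product structure is genuinely preserved in the limit, i.e.\ that there is no subtle correlation created between the outer boundary and the interior loops through the re-embedding map $\Phi$ as $\eps\to 0$. This requires that $\Phi$ depends continuously on $\gamma$ uniformly on compact subsets of the interior — which holds because on the relevant events the conformal maps $\Psi_0^\eps$ converge uniformly on proper compact subsets of $\U$ (as recalled before Proposition \ref{prop4.1}) and because, for any fixed $\delta>0$, only finitely many loops of the cluster have diameter exceeding $\delta$ and they lie in a compact subset away from $\partial\U$, so the distance $d^*$ is controlled by the uniform distance of the conformal maps there. Once this continuity is in hand, the independence in the limit is automatic from the prelimit independence, and the lemma follows.
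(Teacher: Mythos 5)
Your argument is correct and matches the paper's route: the paper treats this lemma as a direct consequence of Lemma \ref{lem:Q-W}, exactly via your key observation that $\Psi^i$ (and the conditioning event $\{\theta(i)\cap D(0,\eps)\neq\emptyset\}$) depend only on the outer boundaries, with the normalization discrepancy between $\Psi^i$ and the map defining $P_0$ absorbed by the M\"obius invariance of $P_0$. The $\eps\to 0$ bookkeeping in your steps (1)--(3) is superfluous, since by Definition \ref{def:3.4} the measure $\nu(i)$ is defined directly as the law of $\Psi^i(\theta(i))$ and $\Psi^i$ is measurable with respect to the CLE alone.
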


In fact, Definition \ref{def:3.4}, Lemma \ref{lem:3.4} and Lemma \ref{lem:ind} can be seen as three different approaches to define the same measure $\nu(i)$.  

Now we want to extend the probability measure $\nu(i)$ to an infinite measure $\nu$ on pinned complete clusters in $\H$ that do not depend on the point $i$. 
%
We recall that in  \cite{MR2979861}, it is shown that the probability measure $\mu(i)$ can be extended to an infinite measure $\mu$ on pinned loops which satisfies the properties listed below. 
Let $u(\eps)$ be the probability that the loop $\gamma(i)$ intersects $D(0,\eps)$. We denote by $\mu(z)$ the measure $\mu$ restricted to loops that surround $z$.
\begin{itemize}
\item For all $z\in\H$, the measure $\mu(z)$ is the limit of $u(\eps)^{-1}$ times the law of $\gamma(z)$ in a CLE, restricted to the event $\{\gamma(z)\cap D(0,\eps)\not=\emptyset\}$.
\item For any conformal transformation $\psi$ from $\H$ onto itself with $\psi(0)=0$, we have
\begin{align*}
\psi\circ\mu=|\psi'(0)|^{-\beta}\mu.
\end{align*}
This is called the \emph{conformal covariance property} of $\mu$.
\item For each $z\in\H$, the mass of $\mu(z)$ is finite and equal to $\psi'(0)^\beta$, where $\psi$ is the conformal map from $\H$ onto itself with $\psi(0)=0$ and $\psi(z)=i$.
\item For two different points $z,z'\in\H$, the measures $\mu(z)$ and $\mu({z'})$ coincide on the loops that surround both $z$ and $z'$.
\item As $\eps\to 0$, $u(\eps)^{-1}$ times the law of the largest loop in a CLE that touches $D(0,\eps)$ converges vaguely to $\mu$.
When we say `largest', we will always mean that it has the largest diameter. This notion is not conformally invariant.
\end{itemize}
Note that the description of $\mu(z)$ for all $z$ fully determines the measure $\mu$, because every pinned loop necessarily surrounds a small disc. 
Now we can define $\nu$ using the measure $\mu$. Recall the definition of $\Phi$ in \S\,\ref{sec:notations}.
\begin{definition}\label{def:nu}
Let $\nu$ be the image under $\Phi$ of $\mu\otimes P_0$.
\end{definition}

The following lemma is a consequence of  the Definition above and of the properties of $\mu$.
\begin{lemma}\label{lem:nuz}
For a loop-soup in $\H$, $u(\eps)^{-1}$ times the probability measure of the complete cluster with largest diameter that intersects $D(0,\eps)$ converges vaguely as $\eps\to 0$ to $\nu$ (hence the measure converges weakly when restricted to the complete clusters that encircle a certain point $z$, or to the ones with a diameter greater than some $c>0$).
\end{lemma}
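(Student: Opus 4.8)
The plan is to decompose the complete cluster of largest diameter meeting $D(0,\eps)$ into its outer boundary and its interior configuration, to recognise the interior configuration as an independent copy of $P_0$ given the CLE, and then to transport through the map $\Phi$ the already-known convergence at the level of outer boundaries (namely the last of the listed properties of the pinned loop measure $\mu$, that $u(\eps)^{-1}$ times the law of the largest CLE loop touching $D(0,\eps)$ converges vaguely to $\mu$). This parallels the proof of Lemma~\ref{lem:ind} / Lemma~\ref{lem:3.4}, but upgrades it from the one-point pinned (probability) setting to the vague-convergence (infinite-measure) setting.

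First I would set up the decomposition. Let $\theta^\eps$ be the complete cluster of largest diameter among those intersecting $D(0,\eps)$ (a.s.\ well defined). Since a complete cluster is contained in the filling of its outer boundary and that filling has the same diameter as the boundary itself, $\theta^\eps$ is exactly the complete cluster whose outer boundary $\gamma^\eps$ is the largest CLE loop meeting $D(0,\eps)$; here one checks that selecting the ``largest'' object commutes between the loop-soup-cluster level and the CLE level, which holds because the outer boundaries of distinct outermost clusters are disjoint CLE loops with disjoint fillings, so this is a genuine bijection. Writing $K^\eps$ for the interior loop configuration of $\theta^\eps$ pulled back to $\U$ by the uniformizing map of $\gamma^\eps$, we get $\theta^\eps=\Phi(\gamma^\eps,K^\eps)$ in the notation of \S\,\ref{sec:notations}. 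By the loop-soup cluster decomposition recalled before Lemma~\ref{lem:Q-W} (conditionally on the CLE the different complete clusters are independent, and each, transported to $\U$ by the uniformizing map of its outer boundary, has interior configuration $P_0$, independent of the CLE), and since both $\gamma^\eps$ and the index of the cluster carrying it are measurable functions of the CLE, the pair $(\gamma^\eps,K^\eps)$ has law $(\mathrm{law}\ \gamma^\eps)\otimes P_0$; using Remark~\ref{rem:p0} (conformal invariance of $P_0$) the choice of marked point in $\Phi$ is irrelevant, so $\mathrm{law}\,(\theta^\eps)=\Phi_*\big((\mathrm{law}\ \gamma^\eps)\otimes P_0\big)$.

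Then $u(\eps)^{-1}\cdot\mathrm{law}\,(\theta^\eps)=\Phi_*\big(u(\eps)^{-1}(\mathrm{law}\ \gamma^\eps)\otimes P_0\big)$, and since $u(\eps)^{-1}(\mathrm{law}\ \gamma^\eps)\to\mu$ vaguely, I would conclude that this converges vaguely to $\Phi_*(\mu\otimes P_0)=\nu$ (Definition~\ref{def:nu}). The main technical point is pushing the vague limit through $\Phi$ and through the product with the fixed finite measure $P_0$: testing against $g\in C_c$ amounts to testing $\mu_\eps:=u(\eps)^{-1}(\mathrm{law}\ \gamma^\eps)$ against $h(\gamma):=\int g(\Phi(\gamma,z,K))\,P_0(dK)$, so one needs $h$ to be continuous and compactly supported in the loop topology. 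Compact support follows because $\Phi(\gamma,z,K)$ can lie in a given compact set of configurations only if $\gamma$ ranges over loops of bounded diameter that stay at bounded distance from $\partial\H$; continuity follows from the Carathéodory kernel theorem (uniform convergence of the uniformizing maps on compacts when the enclosed domains converge) combined with dominated convergence over $K\sim P_0$ (a finite measure). This is the same continuity input used in the proof of Lemma~\ref{lem:3.4}, and I expect it to be the main obstacle — essentially bookkeeping the $d^*$-topology on macroscopic pieces so it is compatible with $\Phi$.

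Finally, the parenthetical ``hence'' clauses: restricting to complete clusters encircling a fixed $z\in\H$, or of diameter $>c$, the measure $\mu$ (hence $\nu$) is finite (another listed property of $\mu$: the total mass of $\mu(z)$ is finite), so on these subsets vague convergence upgrades to weak convergence once the total masses converge; the needed convergence of masses is exactly $u(\eps)^{-1}\,\P[\gamma(z)\cap D(0,\eps)\neq\emptyset]\to$ (total mass of $\mu(z)$), again one of the recalled properties of $\mu$. Since every pinned complete cluster encircles some small disc, letting $z$ range over $\H$ recovers the full vague statement. One should also note that, for $\eps$ small, ``$\theta^\eps$ encircles $z$'' coincides with ``the complete cluster surrounding $z$ meets $D(0,\eps)$'' up to events of probability $o(u(\eps))$ (all other loops touching $D(0,\eps)$ being microscopic), which lets one match the restricted limits with $\mu(z)$ and hence with $\nu$ restricted to clusters encircling $z$, consistently with Definition~\ref{def:nu}.
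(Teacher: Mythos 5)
Your argument is correct and is essentially the one the paper intends: the paper states this lemma without a written proof, as "a consequence of the Definition above and of the properties of $\mu$", and your reconstruction — identifying the largest-diameter cluster meeting $D(0,\eps)$ with the cluster whose outer boundary is the largest CLE loop meeting $D(0,\eps)$, using the decomposition behind Lemma \ref{lem:Q-W} (interior configuration $P_0$, independent of the CLE) to write its law as $\Phi_*\bigl((\mathrm{law}\,\gamma^\eps)\otimes P_0\bigr)$, and pushing the vague convergence $u(\eps)^{-1}(\mathrm{law}\,\gamma^\eps)\to\mu$ through $\Phi$ — is exactly that intended route. The only minor imprecision is your description of compact support (the relevant test functions are those supported on clusters of diameter bounded below or encircling a fixed point, rather than clusters at bounded distance from $\partial\H$), which does not affect the argument.
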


The measure $\nu$ also inherits the conformal covariance property from $\mu$.
\begin{lemma}[Conformal covariance]\label{lem:conf-cov}
For any conformal transformation $\psi$ from $\H$ onto itself with $\psi(0)=0$, we have
\begin{align*}
\psi\circ\nu=|\psi'(0)|^{-\beta}\nu.
\end{align*}
As a consequence, for each $z\in\H$, the mass of $\nu(z)$ is finite and equal to $\psi'(0)^\beta$.
\end{lemma}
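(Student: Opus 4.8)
The plan is to transfer the conformal covariance of $\mu$ (already recalled in the list of properties above) through the map $\Phi$ to obtain it for $\nu$, using the definition $\nu=\Phi(\mu\otimes P_0)$ together with the fact that $P_0$ is invariant under all conformal automorphisms of $\U$ (Remark \ref{rem:p0}). First I would fix a conformal automorphism $\psi$ of $\H$ with $\psi(0)=0$ and spell out how $\psi$ acts on a configuration of the form $\Phi(\gamma,K)$, where $\gamma$ is a pinned loop (rooted at $0$) surrounding some point $z$, and $K$ has law $P_0$. By definition, $\Phi(\gamma,z,K)$ is the image of $K$ under the conformal map $\varphi_{\gamma,z}$ from $\overline\U$ onto the closure of the domain enclosed by $\gamma$ sending $0\mapsto z$, $\varphi'>0$. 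The key algebraic observation is the identity of conformal maps
\begin{align*}
\psi\circ\varphi_{\gamma,z}=\varphi_{\psi(\gamma),\psi(z)}\circ R,
\end{align*}
where $R$ is the (unique) rotation of $\U$ fixing $0$ that makes the derivative at $0$ positive; in other words, $\psi\big(\Phi(\gamma,z,K)\big)=\Phi\big(\psi(\gamma),\psi(z),R(K)\big)$. Since $P_0$ is rotation-invariant, $R(K)$ has again law $P_0$, so pushing $\mu(z)\otimes P_0$ forward by $\psi\circ\Phi$ equals pushing $(\psi\circ\mu(z))\otimes P_0$ forward by $\Phi$ (after absorbing $R$).

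Next I would invoke the conformal covariance of $\mu$: $\psi\circ\mu=|\psi'(0)|^{-\beta}\mu$, and more precisely $\psi\circ\mu(z)=|\psi'(0)|^{-\beta}\mu(\psi(z))$ on the level of the restricted measures (this follows because $\psi$ maps loops surrounding $z$ to loops surrounding $\psi(z)$, and the mass scaling factor $|\psi'(0)|^{-\beta}$ is a global constant not depending on $z$). Combining with the previous paragraph and Definition \ref{def:nu},
\begin{align*}
\psi\circ\nu=\psi\circ\Phi(\mu\otimes P_0)=\Phi\big((\psi\circ\mu)\otimes P_0\big)=|\psi'(0)|^{-\beta}\,\Phi(\mu\otimes P_0)=|\psi'(0)|^{-\beta}\nu,
\end{align*}
which is the asserted covariance. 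For the consequence about $\nu(z)$: its total mass equals the total mass of $\mu(z)\otimes P_0$, which is the mass of $\mu(z)$ since $P_0$ is a probability measure; and the third listed property of $\mu$ says the mass of $\mu(z)$ is $\psi'(0)^\beta$ with $\psi$ the automorphism of $\H$ sending $0\mapsto 0$, $z\mapsto i$. Hence the mass of $\nu(z)$ is $\psi'(0)^\beta$, finite.

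The only genuinely delicate point is the bookkeeping of base points and rotations in the identity $\psi\circ\varphi_{\gamma,z}=\varphi_{\psi(\gamma),\psi(z)}\circ R$: one must check that the extra rotation $R$ is indeed an automorphism of $\U$ fixing $0$ (so that $P_0$-invariance applies) and that $\Phi$ was set up precisely so that this rotation does not affect the resulting configuration's law — this is exactly the normalization convention flagged in the paragraph defining $\Phi$ in \S\ref{sec:notations}, namely that $\gamma$ and $z$ are chosen independently of $K$ so that the law of $\Phi(\gamma,z,K)$ does not depend on $z$ and the choice of root/rotation is immaterial. Everything else is routine pushforward algebra. I would therefore state the rotation-invariance reduction carefully and then let the three-line display above conclude.
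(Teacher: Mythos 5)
Your argument is correct and is essentially the paper's (implicit) proof: the paper states that $\nu$ inherits the covariance from $\mu$ via Definition \ref{def:nu}, which is exactly your pushforward computation through $\Phi$ using $\psi\circ\mu=|\psi'(0)|^{-\beta}\mu$ and the rotation-invariance of $P_0$ from Remark \ref{rem:p0}. Your careful handling of the normalizing rotation $R$ and the mass computation for $\nu(z)$ fills in the bookkeeping the paper leaves to the reader.
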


Now we can upgrade $\nu$ to a measure on {one-point pinned configurations}  by adding back all the loops outside the pinned complete cluster. Recall  that the notion of one-point pinned configurations and  the function $\Phi^{\text{up}}$  are defined in \S\,\ref{sec:notations}.
\begin{definition}\label{def:nu-bar}
Let the one-point pinned configuration measure $\bar\nu$ be the image of the product measure $\nu\otimes \P_\H$ under the map $\Phi^{\text{up}}$.
\end{definition}

The measure $\bar\nu$ is of course conformally covariant, due to the conformal covariance of $\nu$ and $\P_\H$. 
The measure $\bar\nu$ is  also naturally related to the exploration process. 
Note that $\bar\nu$ restricted to the event that the pinned complete cluster encircles the point $i$ is a probability measure. It is equal to the probability measure of $\Psi^i(\Gamma')$, where $\Psi^i$ is the conformal map defined in Figure \ref{fig:exploration-h} and $\Gamma'$ is the collection of loops in $\Gamma$ that stay entirely in the domain of definition of $\Psi^i$.

The results we get until here are not surprising since they are inherited from the CLE. 
In the next section, we are going to exploit further the loop-soup structure and break apart the loops in the same complete cluster.

\subsection{Decomposition of the one-point pinned configuration}

In this section, we will make two decompositions of the one-point pinned configuration and then obtain the conformal restriction property of the measure $\bar\rho_A$, which is a measure  on a certain subset of loops of the one-point pinned configuration defined via the first decomposition.

\subsubsection{The first decomposition}

Let $\Gamma$ be a Brownian loop-soup in $\H$. Recall that $\mathcal{A}$ is the set of all relatively closed sets $A\subset\H$ such that $d(0,\H\setminus A)>0$. 
For  $A\in\mathcal{A}$,  $\Gamma$ can be decomposed as the union of two independent collections of loops:  the collection $\Gamma^A$ of loops that intersect $A$ and the collection $\Gamma_{\H\setminus A}$ of loops that stay entirely in $\H\setminus A$.

We want to make the same kind of decomposition for the measure $\bar\nu$ on pinned configurations $(\theta,\Gamma_\theta)$ in $\H$. Note that a pinned configuration $\Lambda=\theta\cup\Gamma_\theta$ is also the union of the two collections of loops:
 the collection $\Lambda^A$ of loops that intersect $A$ and the collection $\Lambda_{\H\setminus A}$ of loops that stay entirely in $\H\setminus A$. 
We want to show that these two collections are `independent' and that $\Lambda^A$ is supported on pinned configurations and $\Lambda_{\H\setminus A}$ is just a loop-soup in $\H\setminus A$ (hence is equal in law to $\Gamma_{\H\setminus A}$).
Since we are dealing with infinite measures,  the `independence' needs to be stated in terms of product measures.

As the first step, we show that the loops in $\Lambda_{\H\setminus A}$ are not needed to pin the complete cluster.
\begin{lemma}\label{lem:theta-a-pin}
For a.e. pinned complete cluster $\theta$ in the support of $\nu$, the collection $\theta^A$ of loops in $\theta$ that intersect $A$ contains a  pinned complete cluster.
\end{lemma}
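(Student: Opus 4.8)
The plan is to exploit the approximation of $\nu$ from Lemma \ref{lem:nuz} together with the independence of boundary-touching and interior loops inside a complete cluster (Lemma \ref{lem:ind}, Definition \ref{def:nu}). The statement ``$\theta^A$ contains a pinned complete cluster'' means that the sub-collection of loops of $\theta$ that meet $A$ is already a complete cluster that touches $\R$ exactly at $\{0\}$; equivalently, that $\theta^A$ is connected, touches $0$, and its filling equals $F(\theta)$. First I would recall that under $\nu$, the complete cluster $\theta$ is obtained via $\Phi$ from an outer boundary $\gamma$ (with law $\mu$) and an independent interior configuration with law $P_0$; the pinning of $\theta$ at $0$ comes entirely from $\gamma$, which is an SLE$_\kappa$-type pinned loop touching $\R$ at $0$. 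So it suffices to show that the loops of $\theta$ touching $A$ already contain (a) the outer boundary $\gamma$ --- or enough loops to reconstruct $F(\theta)$ and its contact point $0$ with $\R$ --- and (b) form a single cluster.

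The key observation is that $A \in \mathcal{A}$ satisfies $d(0, \H\setminus A) > 0$, so $A$ contains a half-neighbourhood of $0$ in $\H$. Since $\theta$ is pinned at $0$, there are loops of $\theta$ arbitrarily close to $0$, hence infinitely many loops of $\theta$ meeting $A$; and because $A$ contains a whole half-disc $D(0,\delta)$ around the pinning point, the loops of $\theta$ that enter $D(0,\delta)$ form a connected sub-collection whose union already touches $\R$ at $0$ and is connected (any two loops entering $D(0,\delta)$ are joined through the chain of tiny loops accumulating at $0$, which all lie in $D(0,\delta)\subset A$). The remaining point is that this connected piece actually pins, i.e.\ that its filling is all of $F(\theta)$: here one uses that $\theta$ is a single complete cluster, so the loops meeting $A$ are connected to the rest of $\theta$ within $\theta^A$ (via the accumulation at $0$) and hence $\theta^A$ is itself one cluster containing $0$ on $\R$; thus it contains a complete cluster, namely the complete cluster generated inside $\theta^A$, which is pinned at $0$.

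To make this rigorous I would argue via the $\eps\to 0$ limit. By Lemma \ref{lem:nuz}, $\nu$ (restricted to clusters of diameter $>c$, say, which suffices by taking $c\to 0$) is the vague limit of $u(\eps)^{-1}$ times the law of the largest complete cluster $\theta_\eps$ of a loop-soup in $\H$ conditioned to meet $D(0,\eps)$. For the unconditioned loop-soup and a fixed $\eps$, the complete cluster $\theta_\eps$ restricted to loops meeting $D(0,\eps)$ is automatically a connected sub-collection meeting $D(0,\eps)$, and by the resampling/restriction structure of the loop-soup one checks that, on the event that $\theta_\eps$ has diameter $>c$, with conditional probability one this sub-collection is connected to all of $\theta_\eps$ through chains inside $D(0,\eps)$ (this uses that a Brownian loop-soup cluster has a.s.\ no isolated loops near any of its boundary contact points --- a consequence of the density of small loops and the fact, used repeatedly in \cite{MR2979861, Qian-Werner}, that the cluster is ``locally connected'' near $\R$). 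Passing to the limit $\eps\to 0$ with $D(0,\eps)$ replaced by the fixed set $A\supset D(0,\delta)$ and using the weak convergence of Lemma \ref{lem:nuz} then yields the claim for $\nu$-a.e.\ $\theta$.

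\medskip

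The main obstacle I expect is the a.s.\ local connectedness statement: showing that the loops of $\theta$ meeting the half-disc $D(0,\delta)$ around the pinning point are not merely present but form a connected set whose filling recovers the whole pinning structure of $\theta$. This is intuitively clear because a loop-soup cluster accumulates infinitely many overlapping small loops at each of its boundary-contact points (so there are no ``cut points separating $0$ from the bulk that avoid $A$''), but turning it into a clean statement requires either a zero-one argument using the restriction property of the loop-soup under removing $D(0,\delta)$, or a direct estimate on the probability that a chain of loops of diameter in $(2^{-n-1},2^{-n})$ connects scale $2^{-n}$ to scale $2^{-n+1}$ inside an annulus around $0$ --- the kind of multi-scale estimate already implicit in the arboreal/cluster constructions of \cite{MR2979861}. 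One should be careful that $\nu$ is an infinite measure, so all ``a.s.'' statements must be read after restricting to a finite-mass sub-event (clusters encircling a fixed $z$, or of diameter $>c$), exactly as in the statements of Lemmas \ref{lem:nuz} and \ref{lem:conf-cov}.
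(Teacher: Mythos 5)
There is a genuine gap, and it sits exactly where you flag it yourself. The heart of the lemma is to rule out the following scenario: the loops of $\theta$ meeting $A$ organize themselves into infinitely many distinct sub-clusters of $\theta^A$ accumulating at $0$, each of which is glued to the rest of $\theta$ only through loops lying entirely in $\H\setminus A$, so that no single cluster of $\theta^A$ has $0$ in its closure. Your assertion that ``any two loops entering $D(0,\delta)$ are joined through the chain of tiny loops accumulating at $0$, which all lie in $D(0,\delta)\subset A$'' is precisely this missing step, stated without proof; it is not obvious (two loops of $\theta$ dipping into $D(0,\delta)$ could a priori be connected in $\theta$ only through loops avoiding $A$), and the paper's own phase-transition result that boundary-touching loops fail to hook up for $c\in(14/15,1]$ should make you wary of asserting such local connectivity claims by soft ``density of small loops'' reasoning. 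Your fallback suggestions (a zero-one argument, or multi-scale chain estimates across dyadic annuli) are plausible research directions but are not carried out, so the proposal does not constitute a proof. A secondary inaccuracy: the lemma is \emph{not} equivalent to ``$\theta^A$ is connected, touches $0$, and $F(\theta^A)=F(\theta)$''; it only asks that \emph{some} cluster of $\theta^A$ be pinned at $0$, which is weaker (and the stronger statement is false in general, e.g.\ when $A$ is exactly a half-disk around $0$).

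The paper closes this gap with a much softer argument that you could have used instead of any multi-scale estimate: if the bad scenario occurred, each of the offending sub-clusters $\theta_n$ of $\theta^A$ would contain a point within distance $d(\theta_n,0)\to 0$ of the origin \emph{and} would meet a loop contained in $\H\setminus A$, hence would have diameter bounded below by (essentially) $d(0,\H\setminus A)>0$. Infinitely many disjoint sub-clusters of macroscopic diameter accumulating at $0$ would force the outer boundary of $\theta$ to be discontinuous at $0$, contradicting the fact that under $\nu$ this outer boundary is a pinned SLE loop, i.e.\ a continuous curve through $0$ (from \cite{MR2979861}). Note also that this deterministic-plus-continuity argument avoids your detour through the $\eps\to 0$ approximation of Lemma \ref{lem:nuz} and all the attendant vague-convergence bookkeeping for the infinite measure $\nu$, which in your sketch only relocates the difficulty rather than resolving it.
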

\begin{proof}
We only need to prove the statement for the case where $\theta$ is not entirely contained in $A$. We need to rule out the case where there is a sequence of complete clusters $\theta_n$ in $\theta^A$ such that they all intersect one common complete cluster in $\theta_{\H\setminus A}$ and $d(\theta_n,0)$ goes to $0$.
In this case, the complete clusters $\theta_n$ all have diameter greater or equal than $d(0,\H\setminus A)$ and the outer boundary of $\theta$ is therefore discontinuous at $0$, which contradicts the fact that the outer boundary of $\theta$ has the law of a pinned SLE loop (see  \cite{MR2979861}).
\end{proof}
Let $V$ be the map that maps a pinned complete cluster $\theta$ to $V(\theta)$ which is the unique pinned complete cluster in the collection of loops $\theta^A$. The existence of $V(\theta)$ follows from Lemma \ref{lem:theta-a-pin} and the uniqueness is due to the fact that two different clusters in a same loop-soup a.s. have disjoint closures hence cannot be both pinned at the origin.
 Let us define the measures $\rho_A$ and $\rho_A^z$.

\begin{definition}
Let $\rho_A$ be the image of the measure $\nu$ under the map $V$.
For all $z\in A$, let $\rho_A^z$ be the measure $\rho_A$ restricted to the complete clusters that encircle the point $z$, which is a finite measure.
\end{definition}

Let us choose some $\eps$ such that $0<\eps <d(0,\H\setminus A)$. 
Let $\theta^\eps$ be the largest  (in the sequel, by `largest' we mean that it has the largest diameter) complete cluster in $\Gamma$ that intersects $D(0,\eps)$ and we denote its law by $P\theta^\eps$.
Let $\pi^\eps_A$ be the largest complete cluster in $\Gamma^A$ that intersects $D(0,\eps)$ and we denote its law by $P\pi^\eps_A$. We show that $\rho_A^z$ is equal to the following limit.

\begin{lemma}\label{lem1}
For all $z\in A$, the measure $u(\eps)^{-1}P\pi_A^\eps \mathbf{1}_{z\in\pi^\eps_A}$ converges as $\eps\to 0$ to  $\rho_A^z$.
\end{lemma}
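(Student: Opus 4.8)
The plan is to relate the measure $\rho_A^z$, defined as the push-forward of $\nu$ under the "largest-pinned-complete-cluster-in-$\theta^A$" map $V$, to the explicit limiting procedure on the loop-soup side. The key point is that $\rho_A$ was built from $\nu$, and $\nu$ itself (by Lemma \ref{lem:nuz}) is the vague limit of $u(\eps)^{-1} P\theta^\eps$, so we want to commute the map $V$ with this limit. Concretely, first I would observe the exact identity on the loop-soup side: if $\theta^\eps$ is the largest complete cluster in $\Gamma$ meeting $D(0,\eps)$, and $0<\eps<d(0,\H\setminus A)$, then $\theta^\eps$ automatically contains a pinned (one-point) complete cluster in $\theta^{\eps,A}$ — this is the finite-$\eps$ analogue of Lemma \ref{lem:theta-a-pin} and follows because any complete cluster meeting $D(0,\eps)$ that also reaches $\H\setminus A$ has diameter at least $d(0,\H\setminus A)-\eps>0$, so only finitely many such clusters are relevant and the "largest pinned cluster in $\theta^{\eps,A}$" is well defined. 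I then want to argue that, at least after restricting to clusters encircling a fixed $z\in A$ (which makes everything a finite measure and lets me use weak rather than vague convergence), the largest pinned complete cluster $V(\theta^\eps)$ inside $\theta^{\eps,A}$ coincides, for $\eps$ small, with $\pi_A^\eps$, the largest complete cluster in $\Gamma^A$ meeting $D(0,\eps)$; indeed $\Gamma^A$ consists precisely of the loops of $\Gamma$ touching $A$, so the largest complete cluster of $\Gamma^A$ hitting $D(0,\eps)$ is the largest pinned complete cluster obtained from loops that touch $A$ and reach near $0$, which is exactly what $V$ extracts from $\theta^\eps$ once $\theta^\eps$ is the dominant cluster near $0$.

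Next I would set up the limit carefully. By Lemma \ref{lem:nuz}, $u(\eps)^{-1} P\theta^\eps \mathbf{1}_{z\in\theta^\eps}$ converges weakly as $\eps\to 0$ to $\nu^z := \nu\,\mathbf 1_{z\in\theta}$ restricted to clusters encircling $z$ (a finite measure by Lemma \ref{lem:conf-cov}). The map $V$ is measurable and, on the set of configurations for which the cluster $\theta^\eps$ does contain a genuine pinned complete cluster in $\theta^{\eps,A}$ (an event of full measure for $\eps$ small, by the diameter argument above, and of full $\nu$-measure by Lemma \ref{lem:theta-a-pin}), it is continuous for the metric $d^*$ in a neighbourhood of $\nu$-a.e. configuration — the point being that the "largest pinned complete cluster inside $\theta^A$" depends continuously on $\theta$ away from the measure-zero set where two candidate sub-clusters tie for largest diameter or where a sub-cluster is tangent to $\partial A$. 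Hence I can pass $V$ through the weak limit: $u(\eps)^{-1} P\big(V(\theta^\eps)\big)\mathbf 1_{z\in V(\theta^\eps)}$ converges weakly to $\nu^z\circ V^{-1}=\rho_A^z$. Finally, by the identification $V(\theta^\eps)=\pi_A^\eps$ on the relevant event (and the fact that the event where they differ has probability $o(u(\eps))$, again by the diameter bound forcing any competing cluster to be macroscopic and hence to occur with probability bounded away in a way that is negligible after the $u(\eps)^{-1}$ normalization — or more simply, because $z\in\pi_A^\eps$ already forces $\pi_A^\eps$ to be the relevant pinned cluster), we get that $u(\eps)^{-1}P\pi_A^\eps\,\mathbf 1_{z\in\pi_A^\eps}$ has the same limit $\rho_A^z$.

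The main obstacle I anticipate is the continuity/commutation step: justifying that the selection map $V$ (largest pinned complete sub-cluster touching $A$) is continuous $\nu$-almost everywhere in the metric $d^*$, and that the approximating clusters $\theta^\eps$ concentrate, after normalization, on configurations where this selection is stable. This requires a little care because "largest" is not a conformally robust notion and because one must rule out, in the $\eps\to 0$ limit, pathological scenarios where the identity of the dominant pinned sub-cluster flickers; the diameter lower bound $d(0,\H\setminus A)-\eps$ on any cluster that both touches $A$-complement and comes near $0$ is what tames this, together with the fact (used already in Lemma \ref{lem:theta-a-pin}) that the outer boundary of a $\nu$-typical $\theta$ is a continuous SLE loop and hence does not have macroscopic clusters accumulating at $0$. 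A secondary, purely bookkeeping point is the passage between vague convergence (Lemma \ref{lem:nuz}) and the weak convergence of the $z$-restricted finite measures, which is exactly the parenthetical remark in the statement of Lemma \ref{lem:nuz} and is standard.
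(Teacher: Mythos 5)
Your overall strategy is the same as the paper's (push the convergence of Lemma \ref{lem:nuz} through the selection map and identify the result with $\pi_A^\eps$), but there is a genuine gap at the central step: you apply the map $V$ \emph{directly} to the approximating cluster $\theta^\eps$ and claim that, for small $\eps$, the collection $\theta^{\eps,A}$ contains a genuine pinned complete cluster ``with full probability''. This is false: for every fixed $\eps>0$ the cluster $\theta^\eps$ of an actual loop-soup in $\H$ is almost surely at positive distance from $\R$ (no loop, and indeed no cluster, touches the boundary), so there is no sub-cluster of $\theta^{\eps,A}$ pinned at $0$ and $V(\theta^\eps)=\emptyset$ almost surely. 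Your diameter bound $d(0,\H\setminus A)-\eps$ shows that the relevant sub-clusters are macroscopic, but it does not produce pinning, which is an accumulation property at $0$ that only appears in the $\eps\to 0$ limit measure $\nu$. For the same reason the continuity claim for $V$ in the metric $d^*$ at $\nu$-a.e.\ configuration cannot be used as a continuous-mapping argument here: the approximating measures are supported on configurations where $V$ returns the empty set, so the limit of $u(\eps)^{-1}P\bigl(V(\theta^\eps)\bigr)\mathbf 1_{z\in V(\theta^\eps)}$ is simply zero, not $\rho_A^z$.

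The paper's proof circumvents exactly this by introducing a second scale $\delta$ and the surrogate map $V^\delta$, which selects the largest complete cluster of $\theta^A$ intersecting $D(0,\delta)$; this map \emph{is} well defined and nontrivial on finite-$\eps$ configurations, and it is $d^*$-continuous on the open set of configurations with no loop tangent to $\partial A$. One then shows, using that conditionally on $\{z\in\pi_A^\eps\}$ the loops of $\Gamma^A\setminus\pi_A^\eps$ form an independent loop-soup (restricted to loops meeting $A$) in the complement of $\pi_A^\eps$, that the event $J^{\eps,\delta}=\{V^\delta(\theta^\eps)=\pi_A^\eps\}$ has conditional probability $1-o_\delta(1)$ uniformly in $\eps$; this estimate is also what replaces your ``more simply, $z\in\pi_A^\eps$ forces $\pi_A^\eps$ to be the relevant cluster'' shortcut, which is not justified since $\pi_A^\eps$ need not even be contained in $\theta^\eps$ (these rare configurations are precisely the ones pictured in Figure \ref{fig:pi}). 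The limits are then taken in the right order: first $\eps\to 0$ at fixed $\delta$, using Lemma \ref{lem:nuz} and the continuity of $V^\delta$, and only afterwards $\delta\to 0$ to recover $V$ and hence $\rho_A^z$. Without this two-scale device (or some equivalent regularisation of $V$), your commutation step does not go through.
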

\begin{remark}
For all pinned complete clusters $\theta$, by Lemma \ref{lem:theta-a-pin}, $V(\theta)$ necessarily surrounds a small disk in $A$. Therefore, the description of $\rho_A^z$ for all $z\in A$ fully determines the measure $\rho_A$. We can also say that $u(\eps)^{-1}P\pi_A^\eps$ converges vaguely to  $\rho_A$.
\end{remark}

\begin{figure}[h!]
    \centering
    \begin{subfigure}[t]{0.43\textwidth}
    \centering
        \includegraphics[width=.9\textwidth]{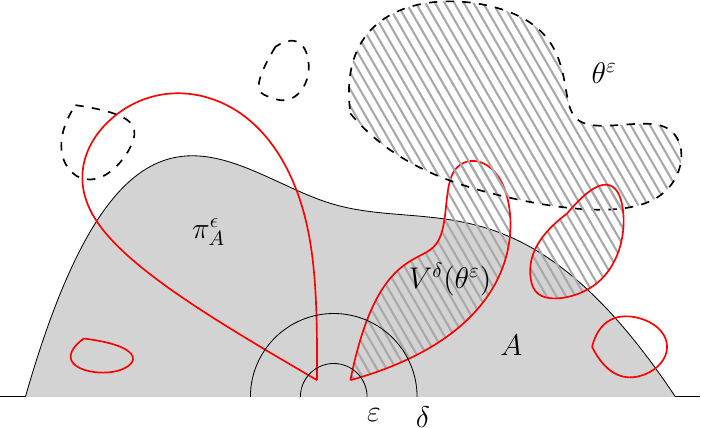}
    \end{subfigure}
    \qquad\quad    
     \begin{subfigure}[t]{0.43\textwidth}
     \centering
        \includegraphics[width=.9\textwidth]{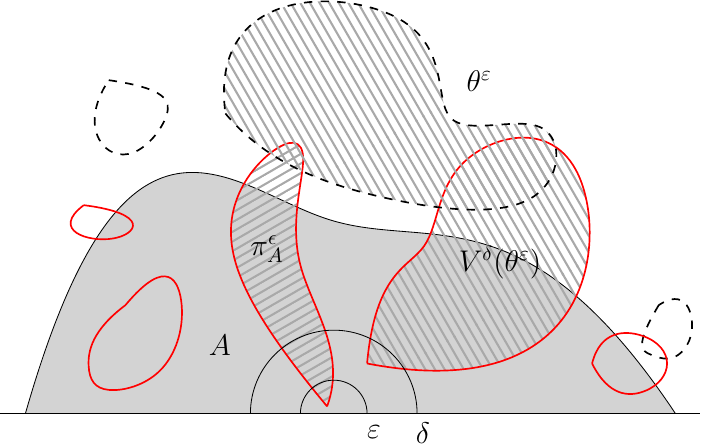}
    \end{subfigure}
    \caption{Sketch of different collections of loops in a loop-soup $\Gamma$. The plain loops represent the complete clusters in $\Gamma^A$ and the dotted loops represent the complete clusters in $\Gamma_{\H\setminus A}$. Their union gives rise to complete clusters in $\Gamma$, such as the $\theta^\eps$ (shaded). We represent here two (rare) configurations where $J^{\eps,\delta}$ do not occur.}
    \label{fig:pi}
\end{figure}

\begin{proof}
We know by Lemma \ref{lem:nuz} that $u(\eps)^{-1}$ times the measure of $\theta^\eps$ converges vaguely to the measure $\nu$ as $\eps\to 0$. The idea is to consider the respective images  of $P\theta^\eps$ and $\nu$ under the  map $V$ to deduce the convergence in the present lemma. However the direct application of $V$ to $\theta^\eps$ yields an empty set. Therefore we first define  the map $V^\delta$ that maps a collection of loops $\theta$ to another collection of loops $V^\delta(\theta)$ which is  the largest complete cluster in $\theta^A$ that intersects $D(0,\delta)$, see Figure \ref{fig:pi}. 

Let $J^{\eps,\delta}$ be the event that $V^\delta(\theta^\eps)$ is  equal to $\pi_A^\eps$. We first try to prove (\ref{j}): Conditionally on the event that $\pi_A^\eps$ encircles some point $z\in A$, the probability of $J^{\eps,\delta}$ is close to $1$. In fact, conditionally on the event that $\pi_A^\eps$ encircles  $z$, the loops in $\Gamma^A \setminus \pi_A^\eps$ are distributed like an independent loop-soup in the unbounded connected component of $\H\setminus\pi_A^\eps$, restricted to the loops that intersect $A$.  If we add back to $\Gamma^A \setminus \pi_A^\eps$ independently in a Poissonnian way all the loops that either do not intersect $A$ or intersect  the filling of $\pi_A^\eps$, then we get a loop-soup in $\H$, whose complete clusters are all bigger than those of $\Gamma^A \setminus \pi_A^\eps$. Among these enlarged complete clusters, the probability that there exist one complete cluster that intersect $D(0,\delta)$ with diameter greater than $d(0, \H\setminus A)$ is $o_\delta(1)$. Therefore with probability $1-o_\delta(1)$, the complete 
cluster $V^\delta(\theta^\eps)$ is either equal to $\pi_A^\eps$, or is contained in $A$. However, if $V^\delta(\theta^\eps)$ is contained in $A$, then $\theta^\eps$ must be equal to $V^\delta(\theta^\eps)$, in which case it must also be equal to $\pi_A^\eps$. Therefore we have proved  
\begin{align}\label{j}
\P\left[J^{\eps,\delta} | z\in\pi_A^\eps\right]=1-o_\delta(1).
\end{align}
Therefore for all continuous and bounded functions $Y$, we have the following.
\begin{align}
\label{l1}
u(\eps)^{-1} \E \left[ Y(\pi_A^\eps) \mathbf{1}_{z\in \pi_A^\eps} \right]&=u(\eps)^{-1}\E\left[  Y(\pi_A^\eps) \mathbf{1}_{\{z\in \pi_A^\eps\}\cap J^{\eps,\delta}}  \right]+o_\delta(1)\\
\label{l2}
&=u(\eps)^{-1}\E\left[  Y(V^\delta(\theta^\eps)) \mathbf{1}_{\{z\in V^\delta(\theta^\eps)\}\cap J^{\eps,\delta}}  \right]+o_\delta(1)\\
\label{l3}
&=u(\eps)^{-1}\E\left[  Y(V^\delta(\theta^\eps)) \mathbf{1}_{\{z\in V^\delta(\theta^\eps)\}}  \right]+o_\delta(1).
\end{align}
The line (\ref{l1}) is due to (\ref{j}), the fact that $\P( z\in\pi_A^\eps)/ u(\eps)\le 1$ and that
$$u(\eps)^{-1} \P((J^{\eps,\delta})^c, z\in\pi_A^\eps)=\P((J^{\eps,\delta})^c | z\in\pi_A^\eps)\, \P( z\in\pi_A^\eps)/ u(\eps)=o_\delta(1).$$
The line (\ref{l3}) can be deduced similarly using the fact 
$\P\left[J^{\eps,\delta} | z\in V^\delta(\theta^\eps) \right]=1-o_\delta(1)$, which can be deduced similarly to how we deduce (\ref{j}).

Now we first let $\eps\to 0$ in (\ref{l3}) and try to show that  it converges to
\begin{align}\label{l4}
\nu\left[ Y(V^\delta(\theta)) \mathbf{1}_{z\in V^\delta(\theta)} \right]+o_\delta(1).
\end{align}
Note that Lemma \ref{lem:nuz} implies that  $u(\eps)^{-1} P\theta^\eps  \mathbf{1}_{\{z\in \theta^\eps\}} $ converges weakly to $\nu  \mathbf{1}_{\{z\in \theta\}}$.  
However, we cannot directly apply the weak convergence, because $V^\delta$ is in general discontinuous at $\theta$. 
The way we have defined $d^*$ makes it that we can add as many small loops to $\theta$ as we want to create a collection $\theta'$ which is at small distance from $\theta$ but has a very different cluster structure (disjoint loops in $\theta$ can be connected in $\theta'$ via a long chain of small loops).
Nevertheless, we can surpass this problem by constructing a sequence $\theta^{\eps_n}$ of complete clusters (whose laws are $P\theta^{\eps_n}$ restricted to those $\theta^{\eps_n}$s that encircle $z$ and then renormalized to total mass $1$) that converges a.s. to $\theta$. More precisely, let $\gamma_n$ be a simple loop that has the same law as the CLE loop in $\H$ encircling $z$ conditioned to intersect $D(0,\eps_n)$. By the results in \cite{MR2979861} that we have recalled under Lemma \ref{lem:ind}, we know that the laws of $\gamma_n$ converge weakly to $\mu(z)$ renormalized to total mass $1$. By Skorokhod's representation theorem, we can find a sequence $\gamma_n$ that converges a.s. to $\gamma$. Let $\xi$ be a collection of loops with the law $P_0$.  
Let $\theta^{\eps_n}$ be equal to $\Phi(\gamma_n,\xi)$. By definition \ref{def:nu} and Lemma \ref{lem:nuz}, the $\theta^{\eps_n}$s have the desired distributions and converge a.s. to $\theta$ which is equal to $\Phi(\gamma,\xi)$.
The maps $\Phi(\gamma_n,\cdot)$ and $\Phi(\gamma,\cdot)$ do not change the cluster structure of $\xi$ (two loops in $\xi$ touch each other if and only if their images in $\theta^{\eps_n}$ or $\theta$ touch each other). Moreover, if a loop in $\theta$ touches $A$, then it must touch the interior of $A$.
Then, knowing that $\gamma_n$ converges a.s. to $\gamma$, we can conclude that $V^\delta(\theta^{\eps_n})$ also converges a.s. to $V^\delta(\theta)$.
Since $Y$ is continuous, we get the desired convergence from (\ref{l3}) to (\ref{l4}).

Then we let $\delta\to 0$ and it converges to $\nu\left[ Y(V(\theta)) \mathbf{1}_{z\in V(\theta)} \right]$.
Therefore we have proved
\begin{align*}
u(\eps)^{-1} \E \left[ Y(\pi_A^\eps) \mathbf{1}_{z\in \pi_A^\eps} \right] \underset{\eps\to 0}\longrightarrow \nu\left[ Y(V(\theta)) \mathbf{1}_{z\in V(\theta)} \right],
\end{align*}
thus proving the lemma.
\end{proof}

Once we have defined $\rho_A$, we  can upgrade it to a measure on one-point pinned configurations. 
Recall the map $\Phi^{\text{up}}$ and the probability measure $\P^A$ defined in \S\,\ref{sec:notations}.

\begin{definition}\label{def:rho-bar}
Let $\bar\rho_A$ (resp. $\bar\rho^z_A$) be the image of the product measure $\rho_A\otimes \P^A$ (resp. $\rho^z_A\otimes \P^A$) under the map $\Phi^{\text{up}}$. 
\end{definition}

Now let us define a  map $U$, which will turn out to relate  $\bar\rho_A$ to the measure $\bar\nu$ in Definition \ref{def:nu-bar}.
Let $(\theta,\Gamma_\theta)$ be a loop configuration in $\H$. Let $\Gamma'$ be another collection of loops in $\H$.  We define the map $U$ that maps $((\theta,\Gamma_\theta),\Gamma')$ to $(\theta', \Gamma'')$ where $\theta'$ is the complete cluster containing $\theta$ in the new loop collection $\tilde\Gamma=\theta\cup\Gamma_\theta\cup\Gamma'$, and where $\Gamma''$ is the collection of all the loops in $\tilde\Gamma$ which are not in $\theta'$. We also denote by $U^1$ the map from $((\theta,\Gamma_\theta),\Gamma')$ to $\theta'$ (see Figure \ref{fig12}).

\begin{figure}[h!]
\centering
\includegraphics[width=0.45\textwidth]{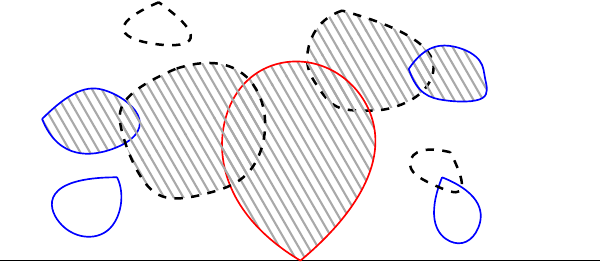}
\caption{We only draw the outer boundaries of the complete clusters. The complete cluster $\theta$ is represented by the red loop. The complete clusters in $\Gamma_\theta$ are represented by the blue loops. The  complete clusters in $\Gamma'$ are represented by the dashed loops. The shaded part represents the complete cluster $\theta'$ and all the loops in $\tilde\Gamma$ that are not in $\theta'$ belong to $\Gamma''$.}
\label{fig12}
\end{figure}

The relation between $\bar\rho_A$ and $\bar\nu$ is given by the following lemma.

\begin{lemma}[First decomposition]\label{lem2}
The measure $\bar\nu$ is the image of $\bar\rho_A\otimes \P_{\H\setminus A}$ by the map $U$.
\end{lemma}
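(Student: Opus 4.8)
\textbf{Proof plan for Lemma \ref{lem2} (First decomposition).}

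The plan is to establish the identity $\bar\nu = U_*(\bar\rho_A \otimes \P_{\H\setminus A})$ by running both sides through the same $\eps$-approximation scheme that was used to define $\nu$ and $\rho_A$, and then matching limits. The starting point is Lemma \ref{lem:nuz}, which says that $u(\eps)^{-1}$ times the law of $\theta^\eps$ (the largest complete cluster of $\Gamma$ meeting $D(0,\eps)$) converges vaguely to $\nu$, and Lemma \ref{lem1}, which says that $u(\eps)^{-1}$ times the law of $\pi_A^\eps$ (the largest complete cluster of $\Gamma^A$ meeting $D(0,\eps)$) converges vaguely to $\rho_A$. First I would fix $\eps>0$ small (so $\eps < d(0,\H\setminus A)$) and observe the exact finite-$\eps$ identity: in the loop-soup $\Gamma = \Gamma^A \cup \Gamma_{\H\setminus A}$, on the event $J^{\eps,\delta}$ introduced in the proof of Lemma \ref{lem1} (which has conditional probability $1-o_\delta(1)$ given that $\pi_A^\eps$ encircles a fixed point), the complete cluster $\theta^\eps$ is precisely the complete cluster containing $\pi_A^\eps$ inside $\pi_A^\eps \cup \Gamma_{\H\setminus A}$, and the remaining loops of $\theta^\eps$ together with the rest of $\Gamma_{\H\setminus A}$ play the role of the ``$\Gamma''$'' output of the map $U$. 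In other words, $\theta^\eps$ (upgraded with the outside loops) is, up to the event $J^{\eps,\delta}$, exactly $U(\Phi^{\text{up}}(\pi_A^\eps, \Gamma^A\setminus\pi_A^\eps), \Gamma_{\H\setminus A})$ — wait, more precisely it is $U$ applied to the one-point pinned configuration built from $\pi_A^\eps$ and the independent loop-soup in $\H\setminus A$.

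The key structural input is the independence built into the loop-soup decomposition: $\Gamma^A$ and $\Gamma_{\H\setminus A}$ are independent, and moreover, conditionally on $\pi_A^\eps$ (and on which point it encircles), the loops of $\Gamma^A\setminus\pi_A^\eps$ form an independent loop-soup in the unbounded component of $\H\setminus\pi_A^\eps$ restricted to loops meeting $A$ — this is the same fact used in the proof of Lemma \ref{lem1}. Combining this with the independence of $\Gamma_{\H\setminus A}$, the configuration $(\pi_A^\eps, (\Gamma^A\setminus\pi_A^\eps) \cup \Gamma_{\H\setminus A})$, restricted to loops that can reach the pinned cluster, has exactly the law $\Phi^{\text{up}}$ applied to (law of $\pi_A^\eps$) $\otimes$ (loop-soup in the complement), which after the $\eps\to 0$, then $\delta\to 0$ limits matches $\bar\rho_A\otimes\P_{\H\setminus A}$ as in Definition \ref{def:rho-bar} and Definition \ref{def:nu-bar}. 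So the strategy is: for a bounded continuous functional $Y$ on one-point pinned configurations and a fixed $z\in A$, write
\begin{align*}
u(\eps)^{-1}\E\!\left[Y\big(\Phi^{\text{up}}(\theta^\eps,\cdot)\big)\mathbf{1}_{z\in\theta^\eps}\right]
= u(\eps)^{-1}\E\!\left[Y\big(U(\Phi^{\text{up}}(\pi_A^\eps,\Gamma^A\setminus\pi_A^\eps),\Gamma_{\H\setminus A})\big)\mathbf{1}_{z\in\pi_A^\eps}\right] + o_\delta(1),
\end{align*}
using $J^{\eps,\delta}$ exactly as in lines (\ref{l1})–(\ref{l3}) of the proof of Lemma \ref{lem1}; the left side converges to $\bar\nu[Y\,\mathbf{1}_{z\in\theta}]$ by Lemma \ref{lem:nuz} and Definition \ref{def:nu-bar}, and the right side converges to $(U_*(\bar\rho_A\otimes\P_{\H\setminus A}))[Y\,\mathbf{1}_{z\in\theta}]$ by Lemma \ref{lem1}, Definition \ref{def:rho-bar}, and continuity of $U$ on the relevant open set of configurations (those with no loop tangent to $\partial A$, which has full measure — compare the continuity discussion for $V^\delta$ in the proof of Lemma \ref{lem1}). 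Since the pinned complete cluster always surrounds a small disc, ranging over $z\in A$ (equivalently $z\in\H$, since outside $A$ both sides give the same restriction by the corresponding statements for $\nu$) determines the two infinite measures, so they agree.

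The main obstacle I anticipate is the continuity/measurability of the map $U$ and the interchange of limits: one needs that $U$ is continuous with respect to $d^*$ on a set of full measure for all the relevant laws, and that the vague convergences from Lemmas \ref{lem:nuz} and \ref{lem1} can be pushed through $U$ and through the functional $Y$ uniformly enough to take $\eps\to 0$ and then $\delta\to 0$. The delicate point is that $U$ reconnects clusters, so a small perturbation of the input loops could in principle merge or split a cluster — this is controlled exactly by restricting to configurations where no loop touches $\partial A$ without crossing it (the set $\mathcal O$ from the proof of Lemma \ref{lem1}) and by the fact, inherited from the SLE-loop description of pinned cluster boundaries (as in Lemma \ref{lem:theta-a-pin}), that the pinned cluster cannot be created by a degenerate accumulation of clusters near $0$. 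Once these continuity statements are in place, the rest is the bookkeeping of the $J^{\eps,\delta}$-truncation argument, which is essentially a rerun of the proof of Lemma \ref{lem1}.
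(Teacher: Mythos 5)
Your overall strategy is the one the paper itself uses: at finite $\eps$, identify the cluster of $\Gamma$ containing $\pi_A^\eps$ as the image under $U$ of $\bigl((\pi_A^\eps,\Gamma^A\setminus\pi_A^\eps),\Gamma_{\H\setminus A}\bigr)$, exploit the independence of $\Gamma^A$ and $\Gamma_{\H\setminus A}$ together with the conditional loop-soup description of $\Gamma^A\setminus\pi_A^\eps$, and pass to the $\eps\to0$, $\delta\to0$ limits via Lemma \ref{lem:nuz}, Lemma \ref{lem1} and the continuity of $U$ off the bad set of tangential loops. So the route is the same; the problem is in the one step you actually wrote out.

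Your displayed approximate identity is not correct as stated, and it is exactly the delicate point. On $J^{\eps,\delta}$ the two arguments of $Y$ do agree, but the indicators do not: $J^{\eps,\delta}$ identifies $\{z\in\pi_A^\eps\}$ with $\{z\in V^\delta(\theta^\eps)\}$, not with $\{z\in\theta^\eps\}$. The event that $\theta^\eps$ surrounds $z$ while $\pi_A^\eps$ does not (the merged cluster can encircle $z$ only thanks to loops of $\Gamma_{\H\setminus A}$ and of other $A$-touching sub-clusters) has probability of order $u(\eps)$ times a constant that does not depend on $\delta$, so the discrepancy between your two sides is not $o_\delta(1)$. Correspondingly, the right-hand side does not converge to $\bigl(U_*(\bar\rho_A\otimes\P_{\H\setminus A})\bigr)[Y\,\mathbf{1}_{z\in\theta}]$ with $\theta$ the image pinned cluster; it converges to $U_*(\bar\rho_A^z\otimes\P_{\H\setminus A})[Y]$, i.e.\ with the restriction that the $\rho_A$-marginal cluster surrounds $z$. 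Taken literally, your chain of limits would prove that, under $\bar\nu$, the pinned cluster surrounds $z$ if and only if its largest $A$-touching pinned sub-cluster does, which is false. The repair is what the paper's proof does implicitly: keep the indicator $\mathbf{1}_{z\in\pi_A^\eps}$ (equivalently $\mathbf{1}_{z\in V^\delta(\theta^\eps)}$, up to $J^{\eps,\delta}$) on both sides, identify the left limit as $\bar\nu[Y\,\mathbf{1}_{z\in V(\theta)}]$ by the convergence of $\theta^\eps$, the continuity of $V^\delta$ and then $\delta\to0$, exactly as in the proof of Lemma \ref{lem1}, and the right limit as $U_*(\bar\rho_A^z\otimes\P_{\H\setminus A})[Y]$. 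Your final determination argument then still applies, because by Lemma \ref{lem:theta-a-pin} the $A$-touching pinned sub-cluster a.e.\ surrounds a small disc in $A$, so the family of events $\{z\in V(\theta)\}$, $z\in A$, determines both infinite measures and yields $\bar\nu=U_*(\bar\rho_A\otimes\P_{\H\setminus A})$.
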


\begin{proof}
We define $\tilde\theta^\eps$ to be the complete cluster  in $\Gamma$ which contains the collection $\pi_A^\eps$. 
In other words, $\tilde\theta^\eps$ is the image of $((\pi_A^\eps, \Gamma^A\setminus \pi_A^\eps),\Gamma_{\H\setminus A})$ under the  map $U^1$. The measure $u(\eps)^{-1}P \pi_A^\eps$ converges vaguely to $\rho_A$, hence $u(\eps)^{-1}$ times the measure of $(\pi_A^\eps, \Gamma^A\setminus \pi_A^\eps)$ converges vaguely to $\bar\rho_A$.
Now we want to argue that $u(\eps)^{-1}P\tilde\theta^\eps$ converges vaguely to $\nu$.
We already know that $u(\eps)^{-1}P\theta^\eps$ converges vaguely to $\nu$. The event $\{\tilde\theta^\eps=\theta^\eps\}$ does not always happen (as in Figure \ref{fig:pi}), but it happens with high probability conditionally on $\{z\in\pi_A^\eps\}$.
We can adapt the same type of reasoning as in the proof of Lemma \ref{lem1} to conclude that $u(\eps)^{-1}P\tilde\theta^\eps$ converges indeed vaguely to $\nu$.
This implies that the measure $\nu$ is the image of $\bar\rho_A\otimes \P_{\H\setminus A}$ under the map $U^1$, which further implies the lemma.
\end{proof}

\subsubsection{The second decomposition}

In this section, we decompose the pinned configuration differently. 
For $A\in\mathcal{A}$, a loop-soup $\Gamma$ is the union of two independent collections of loops: $\Gamma_A$ consisting of all the loops in $\Gamma$ that stay entirely in $A$ and of $\Gamma^{\H\setminus A}$ consisting of all the loops in $\Gamma$ that do not stay entirely in $A$. 

Similarly,  a pinned configuration $\Lambda$ is the union of the two collections of loops:
 the collection of loops $\Lambda_A$ that stay entirely in $A$ and the collection of loops $\Lambda^{\H\setminus A}$ that do not stay entirely in $A$. 
We want to show that $\Lambda_A$ is supported on pinned configurations and $\Lambda^{\H\setminus A}$ is just an `independent' loop-soup.
Of course, the `independence'  is also in the sense of product measures.

Similarly to the first decomposition, we need to show that the loops in $\Lambda^{\H\setminus A}$ are not needed to pin the complete cluster. 
This requires a bit more arguments than proving Lemma \ref{lem:theta-a-pin}, because a priori the loops in $\Lambda^{\H\setminus A}$ can contain some cluster that touches the origin. However we will show that it is a.s. not the case. 

\begin{lemma}
Let $\Lambda^i$ be a pinned complete cluster with the law $\nu(i)$, then almost surely the origin does not belong to any of the loops in $\Lambda^i$.
\end{lemma}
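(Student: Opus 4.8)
The plan is to reduce the statement, via the product representation $\nu(i)=\Phi(\mu(i)\otimes P_0)$ of Lemma \ref{lem:ind}, to a question about a single \emph{fixed} boundary point under the law $P_0$. One samples $\Lambda^i$ by first drawing its outer boundary $\gamma$ with law $\mu(i)$ and, independently, an interior configuration $K$ with law $P_0$, and then setting $\Lambda^i=\Phi(K)$. The origin lies on $\gamma$ (it is the pinch point $\gamma\cap\R=\{0\}$), so the prime ends $p_1,p_2\in\partial\U$ with $\Phi(p_1)=\Phi(p_2)=0$ are measurable functions of $\gamma$ alone, hence independent of $K$. Since $\Phi$ extends continuously to the boundary (the complementary domain of a $\kappa\le4$ loop is regular enough for this), and since the interior loops of $K$ stay at positive distance from $\partial\U$, the origin belongs to some loop of $\Lambda^i$ if and only if one of the fixed points $p_1,p_2$ belongs to some boundary-touching loop of $K$. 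Conditioning on $\gamma$ (hence on $p_1,p_2$), it therefore suffices to show: for every fixed $p\in\partial\U$, under $P_0$ the point $p$ lies on no loop almost surely.

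For this, I would first invoke the conformal invariance of $P_0$ (Remark \ref{rem:p0}): the probability $q:=P_0(p\in\bigcup_{\ell}\ell)$ does not depend on $p\in\partial\U$. Writing $S:=\partial\U\cap\bigcup_\ell\ell$ for the random set of boundary points covered by constituent loops, Fubini gives $\E_{P_0}[\mathrm{Leb}(S)]=2\pi q$, so it is enough to prove that $S$ has zero arc-length (equivalently zero harmonic measure seen from the interior marked point) almost surely. Pulling this back by $\Psi_0^{-1}$, one must show that the set of points of the CLE$_\kappa$ outer boundary $\gamma_0$ lying on some individual loop of the cluster has zero harmonic measure. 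This is the heart of the matter, and it is exactly where the cluster case departs from the single-loop case of Section \ref{sec2}: for one Brownian loop the frontier is a subset of the loop, so every boundary point is covered and $q=1$; but the CLE$_\kappa$ frontier of a cluster (of dimension $1+\kappa/8>4/3$) is rougher than the SLE$_{8/3}$ frontier of any single constituent loop, and harmonic-typical boundary points should be pinch/accumulation points of infinitely many shrinking loops rather than points of a single loop.

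To establish that $S$ is harmonic-null I would argue through the pinning limit of Lemma \ref{lem:nuz}, showing that, conditionally on the cluster reaching within $\eps$ of a boundary point, the loops that actually enter $D(0,\eps)$ have diameters tending to $0$ as $\eps\to0$; in the limit the boundary point is then approached only by a sequence of loops shrinking to it and lies on none of them. The argument of Lemma \ref{lem:theta-a-pin}, which uses continuity of the pinned SLE outer boundary to forbid a sequence of \emph{macroscopic complete clusters} accumulating at $0$, controls everything except a single macroscopic loop threading the pinch, and ruling this last possibility out is the main obstacle: continuity of the outer boundary alone does not exclude it, since a macroscopic loop anchored at $0$ need not make the boundary discontinuous. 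I would close this gap with a boundary loop-measure estimate playing the role of polarity of points—namely that the loop-soup intensity of loops of diameter $\ge\delta$ touching a fixed boundary point is zero, with the measure of such loops meeting $D(0,\eps)$ decaying fast enough relative to $u(\eps)$—so that, combined with the vague convergence $u(\eps)^{-1}P\theta^\eps\to\nu$ restricted to loops of diameter $\ge\delta$, no macroscopic loop can touch the pinch in the limit. A union over $\delta=1/n$ then finishes the proof.
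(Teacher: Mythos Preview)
Your reduction via Lemma~\ref{lem:ind} and the Fubini/rotational-invariance argument is correct and pleasant: it is indeed enough to show that for one fixed $p\in\partial\U$, no loop of a $P_0$-sample passes through $p$. But this is exactly the hard part, and your proposed way of establishing it has a genuine gap.

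The estimate you invoke --- that the loop-soup mass of loops of diameter $\ge\delta$ meeting $D(0,\eps)$ is $o(u(\eps))$ --- is neither proved nor plausible in the form you need. The conditioning $\{\theta(i)\cap D(0,\eps)\neq\emptyset\}$ \emph{encourages} loops to be near the pinch; under the limiting pinned measure $\nu$ there are a.s.\ loops of the cluster inside every neighbourhood of $0$, so for any fixed $\eps'>0$ the conditional expected number of macroscopic loops meeting $D(0,\eps')$ does not tend to $0$ as the pinning scale $\eps\to0$. A first-moment bound of the type $N(\eps)/u(\eps)\to0$ therefore cannot do the job. What you really need is a statement about the \emph{limiting} object (that a macroscopic loop, though arbitrarily close to $0$, does not actually pass through $0$), and this is not accessible from crude intensity bounds on the unconditioned soup.

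The paper's proof takes a completely different route which avoids all quantitative estimates. It works with the \emph{exploration} definition of $\nu(i)$ (Definition~\ref{def:nui}) rather than the product decomposition, and uses a Palm-type trick: add to the loop-soup $\Gamma$ one independent Brownian loop $\eta$ of diameter $>\delta$, and look at the rightmost point $T$ on $[0,1]$ of the complete cluster around the origin in $\Gamma\cup\{\eta\}$. The rightmost point $\tau$ of $\eta$ on $[0,1]$ is a function of $\eta$ alone, hence independent of $\Gamma$; since any fixed point is a.s.\ encircled by some loop of $\Gamma$, the point $\tau$ lies strictly in the interior of the cluster, forcing $T$ strictly to the right of $\tau$ and hence $T\notin\eta$. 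One then transfers this to $\Gamma$ itself by picking $\eta$ uniformly among the finitely many loops of diameter $>\delta$ (the leftover soup is absolutely continuous with respect to $\Gamma$), and takes a union over $\delta$. This gives the result with no rate comparisons at all; the key idea is that the first contact point of a single loop with the exploration segment is \emph{independent} of the rest of the soup, which is precisely what your approach via $P_0$ loses.
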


\begin{figure}[h!]
\centering
\includegraphics[width=0.36\textwidth]{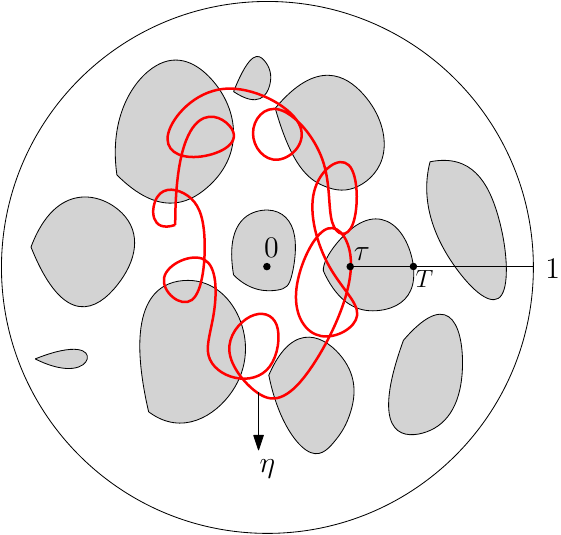}
\caption{The loop-soup $\Gamma$ (we only draw  its filled complete clusters, in grey), the loop $\eta$ and the stopping times $\tau, T$.}
\label{fig:explore-pinned}
\end{figure}

\begin{proof}
It is equivalent, but more convenient for us to work in $\U$, and to prove that the point $1$ a.s. does not belong to the pinned cluster $\Psi_0(\theta_0)$ which is obtained from the exploration process (see Definition \ref{def:nui}).

Let us first consider a loop-soup $\Gamma$ in $\U$, and an independent Brownian loop $\eta$ which is chosen with the renormalized Brownian loop measure restricted to those loops in $\U$ with diameter greater than $\delta$.
In $\Gamma\cup\{\eta\}$, there is a complete cluster  that encircles the origin that we denote by $\tilde\theta_0$.
When we explore along the segment $[0,1]$ from the right to the left, let $T$ be the first time that we encounter $\tilde\theta_0$ (see Figure \ref{fig:explore-pinned}).
We want to show that the point $T$ is a.s. not on $\eta$.  In the case where $\eta\cap [0,1]=\emptyset$, the point $T$ is obviously not on $\eta$. Otherwise, let $\tau$ be the first time that we encounter the loop $\eta$ (by going from right to left along $[0,1]$). 
Note that the point $\tau$ is independent of the loop-soup $\Gamma$, hence $\tau$ is a.s. in the interior of some loop in $\Gamma$. 
 Therefore, $\tau$ must also be in the interior of $\tilde \theta_0$ and hence we must have $\tau<T$, thus the point $T$ is not on $\eta$ either.

Now let us consider the loop-soup $\Gamma$ alone. 
 Let $\eta$ be a loop uniformly chosen within the loops in $\Gamma$ with diameter greater than $\delta$. Then $\eta$ is  distributed like a Brownian loop in $\U$ with diameter greater than $\delta$, and is independent of the rest of the loop-soup $\Gamma\setminus\{\eta\}$. The law of $\Gamma\setminus\{\eta\}$ is absolutely continuous with respect to that of $\Gamma$. 
 Let $\theta_0$ be the complete cluster in $\Gamma$ that surrounds the origin and let $T$ be the right-most point on $[0,1]$ which is in the closure of $\theta_0$. 
We know that the point $T$ is a.s. not on $\eta$ by the argument of the paragraph above. 
Since this is true for any $\eta$ in $\Gamma$ with diameter greater than $\delta$ and for any $\delta$, we conclude that the point $T$ is a.s. not on any of the loops in $\Gamma$.
Consequently the point $1$ is not on any of the loops in $\Psi_0(\theta_0)$. 
\end{proof}

This implies in particular the following lemma.
\begin{lemma}\label{lem:extW}
After removing any finite number of loops from the collection $\Lambda^i$, among the complete clusters formed by the remaining loops, there exist a.s. a complete cluster pinned at $0$.
\end{lemma}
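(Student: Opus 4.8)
The plan is to show that the pinning of $\Lambda^i$ at $0$ is an asymptotic property of the cluster, produced by infinitely many of its loops accumulating at the origin, and hence not destroyed by deleting any finite sub-collection. First I would set up the dichotomy coming from the previous lemma: almost surely no loop of $\Lambda^i$ contains $0$, whereas, $\Lambda^i$ being a one-point pinned complete cluster, $\Lambda^i\cap\R=\{0\}$, i.e.\ $0$ belongs to the closure of $\bigcup_{\ell\in\Lambda^i}\ell$. In particular each loop $\ell\in\Lambda^i$ is at positive distance from $0$ (being compact), so any finite $F\subset\Lambda^i$ keeps its distance $r:=\min_{\ell\in F}d(\ell,0)>0$ from the origin, and the loops of $\Lambda^i$ meeting $D(0,r)$ are exactly those of $\Lambda^i\setminus F$ meeting $D(0,r)$.

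The heart of the argument is to exhibit, inside $\Lambda^i$, an infinite sequence of pairwise distinct loops $\ell_1,\ell_2,\dots$ with $\ell_k\cap\ell_{k+1}\neq\emptyset$ for all $k$ and $d(\ell_k,0)\to 0$. I would read such a chain off the outer boundary $\gamma$ of $\Lambda^i$, which by \cite{MR2979861} is a pinned SLE$_\kappa$ loop, in particular a continuous curve passing through $0$: following $\gamma$ towards $0$, it is supported on loops of $\Lambda^i$ along a set of parameters that is dense near $0$ (away from a small exceptional set, such as cut points of the cluster, that does not obstruct the extraction), these loops become arbitrarily small since $0$ lies on none of them, and two of them carrying $\gamma$-consecutive arcs share their common transition point and hence intersect; chaining these up along $\gamma$ and passing to a subsequence of distinct loops gives the claim. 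Making this extraction rigorous — that is, controlling the points of $\gamma$ near $0$ that lie on no single loop of $\Lambda^i$ — is the delicate point, and is where I would invoke the known structural and dimensional results on the boundaries of Brownian loop-soup clusters.

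Once the chain is in hand the rest is bookkeeping. Given a finite $F\subset\Lambda^i$, since the $\ell_k$ are pairwise distinct only finitely many of them lie in $F$, so for some $k_0$ the tail $(\ell_k)_{k\ge k_0}$ is an infinite chain contained in $\Lambda^i\setminus F$ with $d(\ell_k,0)\to 0$. All of its loops lie in one cluster, hence in one complete cluster $\theta'$, of $\Lambda^i\setminus F$, and $0\in\overline{\theta'}$ because $d(\ell_k,0)\to 0$. Finally $\theta'\subseteq\bigcup_{\ell\in\Lambda^i}\ell$, whose closure meets $\R$ only at $0$, so $\overline{\theta'}\cap\R=\{0\}$: that is, $\theta'$ is a complete cluster of the remaining loops that is pinned at $0$, as wanted. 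The step I expect to be the main obstacle is thus the extraction of the chain into $0$ from the continuity of the SLE boundary loop; everything else follows directly from the definitions.
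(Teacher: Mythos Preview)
Your approach is genuinely different from the paper's, and the difference matters. You try to \emph{construct} an explicit infinite chain of intersecting loops converging to $0$ by reading it off the outer boundary $\gamma$; the paper instead argues \emph{by contradiction}, in the spirit of Lemma~\ref{lem:theta-a-pin}. Since $0$ lies on no loop of $\Lambda^i$, the finite removed set $R$ satisfies $d(R,0)>0$. If no complete cluster of $\Lambda^i\setminus R$ were pinned at $0$, then (because $\Lambda^i$ is a single cluster and $R$ has only finitely many sub-clusters) one extracts infinitely many distinct complete clusters $\theta_n$ of $\Lambda^i\setminus R$ with $d(\theta_n,0)\to 0$ that all meet one common cluster of $R$; each $\theta_n$ then has diameter at least of order $d(R,0)$, and infinitely many such macroscopic pieces accumulating at $0$ force the outer boundary of $\Lambda^i$ to be discontinuous at $0$, contradicting that it is a pinned SLE$_\kappa$ loop. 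This uses exactly the same input you invoke (continuity of $\gamma$ at $0$) but in a single stroke, without any decomposition of $\gamma$ into loop-carried arcs.

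The gap in your argument is precisely the step you flag as delicate, and it is not a technicality. You assert that along $\gamma$ near $0$ one finds loops of $\Lambda^i$ ``carrying $\gamma$-consecutive arcs'' that ``share their common transition point and hence intersect''. None of this is established: it is not known that a typical point of the outer boundary of a loop-soup cluster lies on an individual Brownian loop (the outer boundary is the boundary of the filling, not a subset of any single loop), there is no well-defined notion of ``consecutive arcs'' of $\gamma$ carried by loops, and the vague appeal to ``structural and dimensional results'' does not point to any statement in the literature that would yield an intersecting chain. Indeed, the paper's own results show that for $c\in(14/15,1]$ the boundary-touching loops of a cluster do \emph{not} by themselves hook up into a single cluster, which should make you suspicious of any argument that tries to chain loops together purely along the boundary. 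Your final bookkeeping (tail of the chain survives removal of $F$) is fine, but it rests on a chain whose existence you have not proved and which would be substantially harder to establish than the lemma itself.
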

\begin{proof}
Let $R$ be the finite collection of loops that we have removed from $\Lambda^i$.
Since $0$ is not on any of the loops in $\Lambda^i$, we  only need to rule out the case where in the collection of loops $\Lambda^i\setminus R$, there is a sequence of complete clusters $\theta_n$ such that they intersect a common complete cluster of $R$ and $d(\theta_n, 0)$ goes to $0$. However this is impossible for similar reasons as in
Lemma \ref{lem:theta-a-pin}. Note that the distance from the loops in $R$ to $0$ is strictly positive.
\end{proof}

This lemma  tells us that a pinned complete cluster is  pinned by the small loops in an infinitesimal neighborhood of the origin. 
Now we are going to list the required lemmas for the second decomposition.
The main idea is analogous to the first decomposition, hence  we will omit similar  proofs and focus on the differences.
Let $W$ be the map that maps a pinned complete cluster $\theta$ to the image $W(\theta)$ which is the unique pinned complete cluster in  $\theta_A$. 
The existence of $W(\theta)$ follows from Lemma \ref{lem:extW} and the uniqueness is due to the fact that two different clusters in a same loop-soup a.s. have disjoint closures hence cannot be both pinned at the origin.
We now define the measure $\nu_A$.
\begin{definition}
Let $\nu_A$ be the image of the measure $\nu$ under the map $W$. For all $z\in A$, let $\nu_A^z$ be the measure $\nu_A$ restricted to the complete clusters that encircle the point $z$, which is a finite measure.
\end{definition}

Let $\theta_A^\eps$ be the largest complete cluster that intersects $D(0,\eps)$ formed by the loops in $\Gamma_A$. Then we have the following lemma.

\begin{lemma}\label{lem:theta-cvg}
For all $z\in A$, the measure $u(\eps)^{-1}P\theta_A^\eps \mathbf{1}_{z\in\theta^\eps_A}$ converges as $\eps\to 0$ to  $\nu_A^z$.
\end{lemma}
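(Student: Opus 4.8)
The plan is to replicate the proof of Lemma~\ref{lem1} almost verbatim, the only genuinely new ingredient being the two lemmas just established, namely that a $\nu$-typical pinned complete cluster $\theta$ touches $0$ on none of its loops and that removing from it any finite collection of loops still leaves a complete cluster pinned at $0$. First I would recall that $\nu_A$ is the image of $\nu$ under $W$ (which extracts the biggest pinned complete cluster of $\theta_A$) and introduce, for $0<\delta<d(0,\H\setminus A)$, the auxiliary map $W^\delta$ sending a loop configuration $\theta$ to the largest complete cluster of $\theta_A$ that meets $D(0,\delta)$. Exactly as for $V^\delta$ in Lemma~\ref{lem1}, $W^\delta$ is continuous for the metric $d^*$ on the open set $\mathcal O$ of configurations having no loop that touches $\partial A$ without crossing it, and both $\theta^\eps$ (the largest complete cluster of $\Gamma$ meeting $D(0,\eps)$) and a $\nu$-typical $\theta$ lie in $\mathcal O$ almost surely; recall also that $\theta_A^\eps$ is, by definition, the largest complete cluster built from the loops of $\Gamma$ that stay entirely in $A$ and meet $D(0,\eps)$.

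The core step is the estimate $\P[J^{\eps,\delta}\mid z\in\theta_A^\eps]=1-o_\delta(1)$, where $J^{\eps,\delta}:=\{W^\delta(\theta^\eps)=\theta_A^\eps\}$, together with the analogous $\P[J^{\eps,\delta}\mid z\in W^\delta(\theta^\eps)]=1-o_\delta(1)$. One conditions on the event that $\theta_A^\eps$ encircles $z$; then $\Gamma_A\setminus\theta_A^\eps$ is an independent loop-soup in $A$ restricted to the loops avoiding the filling of $\theta_A^\eps$, and adding back in a Poissonian way both the loops of $\Gamma$ that do not stay entirely in $A$ and the loops meeting $F(\theta_A^\eps)$ recovers $\Gamma$. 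The complete cluster $\theta^\eps$ then either contains $\theta_A^\eps$ or is disjoint from it; the disjoint case has probability $o_\delta(1)$ given $z\in\theta_A^\eps$ since $\theta_A^\eps$ is macroscopic (it encircles the fixed point $z\in A$), and in the containing case one has $W^\delta(\theta^\eps)=\theta_A^\eps$ unless, among the loops of $\theta^\eps$ that stay entirely in $A$, there is a complete cluster of diameter at least $d(0,\H\setminus A)$ meeting $D(0,\delta)$ distinct from $\theta_A^\eps$; by the Poissonian comparison argument (the enlarged complete clusters dominate those of $\Gamma_A\setminus\theta_A^\eps$, and none of them is macroscopic within $D(0,\delta)$ with probability $1-o_\delta(1)$), exactly as in (\ref{j}), this happens with probability $o_\delta(1)$.

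With this in hand the chain of identities (\ref{l1})--(\ref{l3}) goes through unchanged with $V^\delta,\pi_A^\eps$ replaced by $W^\delta,\theta_A^\eps$: for every bounded continuous $Y$ one gets $u(\eps)^{-1}\E[Y(\theta_A^\eps)\mathbf 1_{z\in\theta_A^\eps}]=u(\eps)^{-1}\E[Y(W^\delta(\theta^\eps))\mathbf 1_{z\in W^\delta(\theta^\eps)}]+o_\delta(1)$; letting $\eps\to0$ via Lemma~\ref{lem:nuz} and the continuity of $W^\delta$ yields $\nu[Y(W^\delta(\theta))\mathbf 1_{z\in W^\delta(\theta)}]+o_\delta(1)$, and letting $\delta\to0$ gives $\nu[Y(W(\theta))\mathbf 1_{z\in W(\theta)}]=\nu_A^z[Y]$. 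The only place where the two new lemmas enter is this final limit: $W^\delta(\theta)\to W(\theta)$ requires that $W(\theta)$ be pinned at $0$ by arbitrarily small loops, i.e.\ that $0$ not be isolated among the loops of $\theta_A$, which is precisely the content of the lemma on removing finitely many loops (together with the fact that $0$ lies on no loop of $\theta$); this is the one substantive difference with the first decomposition. I expect the main obstacle to be the estimate $\P[J^{\eps,\delta}\mid z\in\theta_A^\eps]=1-o_\delta(1)$: one must rule out that passing from the sub-soup $\Gamma_A$ to the full soup $\Gamma$ lets the complete cluster meeting $D(0,\eps)$ annex extra small $A$-loops near $0$ that would redirect $W^\delta$ to a different cluster, and this is exactly what the Poissonian add-back comparison controls, via the a priori bound that a loop-soup has, with probability tending to $1$, no macroscopic complete cluster within distance $\delta$ of a fixed point.
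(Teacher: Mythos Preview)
Your proposal follows the same overall route as the paper: introduce $W^\delta$, prove $\P[\,W^\delta(\theta^\eps)=\theta_A^\eps\mid z\in\theta_A^\eps\,]=1-o_\delta(1)$, run the chain (\ref{l1})--(\ref{l3}) with $W^\delta,\theta_A^\eps$ in place of $V^\delta,\pi_A^\eps$, take $\eps\to0$ via Lemma~\ref{lem:nuz}, then $\delta\to0$. Your remark that the two preparatory lemmas (no loop of the pinned cluster passes through $0$; removing finitely many loops preserves pinnedness) are what makes $W^\delta(\theta)\to W(\theta)$ work is well taken and is the one genuinely new point relative to Lemma~\ref{lem1}.

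The only place where your argument differs from the paper's, and where it is slightly loose, is the justification of the key estimate. You split into ``$\theta^\eps$ contains $\theta_A^\eps$'' versus ``disjoint'' and, in the containing case, claim that a competing complete cluster of $(\theta^\eps)_A$ meeting $D(0,\delta)$ must have diameter at least $d(0,\H\setminus A)$; that bound is transposed from Lemma~\ref{lem1} but is not obviously correct here (such a cluster lives entirely in $A$, so there is no a priori reason it must reach $\partial A$). The paper sidesteps this by a more direct observation: on the bad event $\{W^\delta(\theta^\eps)\neq\theta_A^\eps\}$, the collection $\Gamma^{\H\setminus A}\cup(\Gamma_A\setminus\theta_A^\eps)$ must contain a cluster touching both $D(0,\delta)$ and $\H\setminus A$; since conditionally on $\theta_A^\eps$ this collection is dominated by a loop-soup in $\H$, the probability that such a crossing cluster exists is $o_\delta(1)$. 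This absorbs both of your cases at once and supplies the macroscopic lower bound (namely $d(0,\H\setminus A)-\delta$) that your version asserts without proof. With that one correction your argument goes through.
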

\begin{proof}
We define the map $W^\delta$ that maps a collection of loops $\theta$ to another collection of loops $W^\delta(\theta)$ which is the largest complete cluster that intersects $D(0,\delta)$ formed by the loops in $\theta$ that stay entirely in $A$.
Let $H^{\eps,\delta}$ be the event that $W^\delta(\theta^\eps)$ is equal to $\theta_A^\eps$.
Then conditionally on the event  $\{z\in\theta_A^\eps\}$ for some point $z\in A$, the probability of $H^{\eps,\delta}$ is close to $1$.
In fact, conditionally on the event that  $\theta_A^\eps$ encircles $z$, the loops in $\Gamma_A\setminus\theta_A^\eps$ are distributed like an independent loop-soup in  $A\setminus \theta_A^\eps$. 
On the event $(H^{\eps,\delta})^c$ for which  $W^\delta(\theta^\eps)$ is not equal to $\theta_A^\eps$, the union of the loops in $\Gamma^{\H\setminus A}$ and the loops in $\Gamma_A\setminus\theta_A^\eps$ must contain a cluster that touch both $D(0,\delta)$ and $\H\setminus A$. The probability of this event goes to $0$ as $\delta\to 0$. Therefore we have proved
\begin{align*}
\P\left[ H^{\eps,\delta} | z\in \theta_A^\eps \right]=1-o_\delta(1).
\end{align*}
The rest of the proof stays very similar to that of Lemma \ref{lem1}, hence we leave it to the reader.
\end{proof}

Now let $\theta_A(z)$ be the complete cluster in $\theta_A$ that encircles the point $z$.
We have the following lemma, which can be taken as an alternative definition of  $\nu_A^z$.
\begin{lemma}\label{lem:3.25}
For all $z\in A$, the measure $u(\eps)^{-1}P\theta_A(z) \mathbf{1}_{\theta_A(z)\cap D(0,\eps)\not=\emptyset}$ converges as $\eps\to 0$ to  $\nu_A^z$.
\end{lemma}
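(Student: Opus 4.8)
The plan is to deduce the statement from Lemma~\ref{lem:theta-cvg}, by showing that after multiplying by $u(\eps)^{-1}$, weighting the \emph{largest} complete cluster of $\Gamma_A$ meeting $D(0,\eps)$ by the indicator that it surrounds $z$ is asymptotically the same as weighting the complete cluster of $\Gamma_A$ surrounding $z$ by the indicator that it meets $D(0,\eps)$. Here $\theta_A^\eps$ denotes the largest complete cluster of $\Gamma_A$ meeting $D(0,\eps)$ (as in Lemma~\ref{lem:theta-cvg}) and $\theta_A(z)$ the complete cluster of $\Gamma_A$ encircling $z$. This is the exact analogue of the passage from Lemma~\ref{lem:nuz} to Lemma~\ref{lem:3.4}, now carried out inside $A$.

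First I would record two elementary facts. (a)~The complete clusters of $\Gamma_A$ have pairwise disjoint fillings (their outer boundaries form a CLE in $A$, i.e.\ a collection of disjoint, non-nested simple loops), so $\theta_A(z)$ is the \emph{unique} complete cluster of $\Gamma_A$ encircling $z$; consequently, on the event $E_2:=\{z\in\theta_A^\eps\}$ we automatically have $\theta_A(z)=\theta_A^\eps$, and, since $\theta_A^\eps$ meets $D(0,\eps)$ by definition, $E_2\subseteq E_1:=\{\theta_A(z)\cap D(0,\eps)\ne\emptyset\}$ with the two clusters agreeing on $E_2$. (b)~On $E_1$ the cluster $\theta_A(z)$ both surrounds $z$ and meets $D(0,\eps)$, hence has diameter at least $d(0,z)-\eps\geq\delta_0:=d(0,z)/2$ once $\eps<d(0,z)/2$; and then $\theta_A^\eps$, being the largest complete cluster of $\Gamma_A$ meeting $D(0,\eps)$, also has diameter at least $\delta_0$. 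Combining (a) and (b), the symmetric difference of $E_1$ and $E_2$ equals $E_1\setminus E_2$, on which $\theta_A^\eps\ne\theta_A(z)$; thus $E_1\setminus E_2$ is contained in the event
\begin{align*}
B^\eps:=\{\Gamma_A\ \text{has two distinct complete clusters of diameter}\geq\delta_0,\ \text{both meeting}\ D(0,\eps)\}.
\end{align*}

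The key estimate is then $\P(B^\eps)=o(u(\eps))$ as $\eps\to0$, and this is the step I expect to require the most care. I would prove it by a two-step conditioning of exactly the type used in the proof of Lemma~\ref{lem1}: the probability that $\Gamma_A$ has one complete cluster of diameter $\geq\delta_0$ meeting $D(0,\eps)$ is $O(u(\eps))$ (with constant depending on $\delta_0$ and $A$, since $d(0,\H\setminus A)>0$ means $A$ looks like $\H$ near $0$); conditionally on the largest such one being $\theta_A^\eps$, the loops of $\Gamma_A$ in the unbounded component of $A\setminus F(\theta_A^\eps)$ are stochastically dominated, after adding back loops in a Poissonian way as in Lemma~\ref{lem1}, by an independent loop-soup in that domain, and the probability that this loop-soup produces a \emph{second} complete cluster of diameter $\geq\delta_0$ meeting $D(0,\eps)$ is again $O(u(\eps))$; hence $\P(B^\eps)=O(u(\eps)^2)=o(u(\eps))$. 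Alternatively one can quote the corresponding ``two macroscopic outermost clusters approaching a boundary point'' estimate from \cite{MR2979861}.

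Granting this, for every bounded continuous $Y$ the integrands $Y(\theta_A(z))\mathbf{1}_{E_1}$ and $Y(\theta_A^\eps)\mathbf{1}_{E_2}$ coincide on $E_2$, while $\mathbf{1}_{E_2}$ vanishes off $E_1$; hence their difference is supported on $E_1\setminus E_2\subseteq B^\eps$ and bounded there by $\|Y\|_\infty$, so
\begin{align*}
u(\eps)^{-1}\,\E\!\left[Y(\theta_A(z))\,\mathbf{1}_{\theta_A(z)\cap D(0,\eps)\ne\emptyset}\right]
=u(\eps)^{-1}\,\E\!\left[Y(\theta_A^\eps)\,\mathbf{1}_{z\in\theta_A^\eps}\right]+O\!\left(u(\eps)^{-1}\P(B^\eps)\right),
\end{align*}
and the right-hand side converges to $\nu_A^z[Y]$ as $\eps\to0$ by Lemma~\ref{lem:theta-cvg} together with $\P(B^\eps)=o(u(\eps))$. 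This gives the claimed vague convergence; restricting to complete clusters encircling a fixed point, or to those of diameter bounded below, then upgrades it to weak convergence of the (finite) measures, exactly as in the previous lemmas. Apart from the estimate on $B^\eps$, the argument is routine bookkeeping parallel to the proofs of Lemmas~\ref{lem1} and \ref{lem:theta-cvg}.
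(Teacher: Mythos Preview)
Your proof is correct and takes essentially the same approach as the paper: reduce to Lemma~\ref{lem:theta-cvg} by showing that on the event $\{\theta_A(z)\cap D(0,\eps)\ne\emptyset\}$ one has $\theta_A(z)=\theta_A^\eps$ with high probability. The paper's proof is a one-line version of exactly this (``conditionally on the event $\{\theta_A(z)\cap D(0,\eps)\ne\emptyset\}$, the set $\theta_A(z)$ is with high probability equal to the set $\theta_A^\eps$''), whereas you have unpacked the bookkeeping and made the ``two macroscopic clusters near a boundary point'' estimate $\P(B^\eps)=o(u(\eps))$ explicit.
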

\begin{proof}
Note that, conditionally on the event ${\theta_A(z)\cap D(0,\eps)\not=\emptyset}$, the set $\theta_A(z)$ is with high probability equal to the set $\theta_A^\eps$. Then we can apply Lemma \ref{lem:theta-cvg} to conclude.
\end{proof}

Before stating the second decomposition lemma, let us point out that the measure $\nu_A$ is related to the previously defined measure $\nu$ (which is in fact  $\nu_\H$). 
It is reasonable to expect that for simply connected $A$, $\nu_A$ is equal to the image of the measure $\nu_\H$ under some given conformal map from $\H$ onto $A$, up to a covariance constant. 
For more general (not simply connected) domains $A\in\mathcal{A}$, we also expect the conformal covariance to hold.
Lemma \ref{lem:2nd-decomp} (second decomposition) that will be proved later  can be seen as a restriction property of the measure $\nu$. With a little abuse of notation (talking about $\nu_{A_1}, \nu_{A_2}$ as if they were probability measures), we can say that for $A_1\subset A_2$, if $\theta_{A_2}$ is a pinned complete cluster in $A_2$ with the law $\nu_{A_2}$, then the pinned complete cluster formed by the loops in $\theta_{A_2}$ that stay in $A_1$ is distributed like $\nu_{A_1}.$  
Now we state the above-mentioned properties in the following lemmas.

\begin{lemma}
Let $A_1, A_2$ be two domains such that there is a conformal map $\varphi$ from $A_1$ onto $A_2$. The measure $\nu_{A_1}$ is equal to $\varphi'(0)^{-\beta}$ times the image of $\nu_{A_2}$ under $\varphi^{-1}$.
\end{lemma}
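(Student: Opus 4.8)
The plan is to read off the stated conformal covariance from the conformal invariance of the Brownian loop-soup together with the limiting description of $\nu_A$ given in Lemma~\ref{lem:3.25}. We may assume that $\varphi$ fixes $0$ with $\varphi'(0)>0$ (so, after Schwarz reflection across the boundary interval of $A_1$ through $0$, $\varphi$ is conformal on a neighbourhood of $0$ and sends pinned complete clusters to pinned complete clusters). Since $A\in\mathcal A$ contains a half-disk around $0$, every pinned complete cluster in the support of $\nu_A$ surrounds some small half-disk centred at $0$ that lies in $A$; hence the finite measures $\{\nu_A^z:z\in A\}$ determine $\nu_A$, and it suffices to prove that $\varphi_*(\nu_{A_1}^z)$ is the appropriate multiple of $\nu_{A_2}^{\varphi(z)}$ for every $z\in A_1$. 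No simple connectedness of $A_1,A_2$ is needed here, since Lemma~\ref{lem:3.25} holds for all $A\in\mathcal A$ and loop-soups are conformally invariant under any conformal map between domains.

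The key input is that if $\Gamma$ is a loop-soup in $\H$, then the sub-collection $\Gamma_{A_1}$ of loops of $\Gamma$ contained in $A_1$ is a loop-soup in $A_1$ (the restriction property of loop-soups, immediate from their Poissonian definition), and therefore $\varphi(\Gamma_{A_1})$ has the law of a loop-soup in $A_2$. Consequently the complete cluster $\theta_{A_1}(z)$ of $\Gamma_{A_1}$ surrounding $z$ maps under $\varphi$, in law, to the complete cluster $\theta_{A_2}(\varphi(z))$ surrounding $\varphi(z)$ in a loop-soup in $A_2$, and under this identification $\{\theta_{A_1}(z)\cap D(0,\eps)\neq\emptyset\}$ becomes $\{\theta_{A_2}(\varphi(z))\cap\varphi(D(0,\eps))\neq\emptyset\}$ (because $\varphi$ is a bijection). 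Pushing the limit in Lemma~\ref{lem:3.25} forward by $\varphi$ thus gives
\begin{align*}
\varphi_*\bigl(\nu_{A_1}^z\bigr)=\lim_{\eps\to0}u(\eps)^{-1}\,P\bigl[\theta_{A_2}(\varphi(z))\bigr]\,\mathbf 1_{\theta_{A_2}(\varphi(z))\cap\varphi(D(0,\eps))\neq\emptyset}.
\end{align*}

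The remaining point is to replace $\varphi(D(0,\eps))$ by a genuine half-disk and to absorb the change of radius into the normalisation. Since $\varphi(w)=\varphi'(0)\,w+O(w^2)$ near $0$, for every $\kappa>1$ and all small $\eps$ one has $D(0,\varphi'(0)\eps/\kappa)\subset\varphi(D(0,\eps))\subset D(0,\varphi'(0)\kappa\eps)$; as the indicator is non-decreasing in the set, the expression above is squeezed between the analogous ones with $\varphi(D(0,\eps))$ replaced by $D(0,r_\pm)$, $r_-=\varphi'(0)\eps/\kappa$, $r_+=\varphi'(0)\kappa\eps$. Writing $u(\eps)^{-1}=\bigl(u(r_\pm)/u(\eps)\bigr)\,u(r_\pm)^{-1}$, Lemma~\ref{lem:3.25} with $\eps$ replaced by $r_\pm\to0$ makes $u(r_\pm)^{-1}P[\theta_{A_2}(\varphi(z))]\mathbf 1_{\theta_{A_2}(\varphi(z))\cap D(0,r_\pm)\neq\emptyset}$ converge to $\nu_{A_2}^{\varphi(z)}$, while the conformal covariance of $\mu$ (equivalently of $\nu$, Lemma~\ref{lem:conf-cov}) forces $u$ to be regularly varying at $0$ with index $\beta$: taking $z=i/\lambda$ in the properties of $\mu$ listed before Definition~\ref{def:nu} gives $u(\lambda\eps)/u(\eps)\to\lambda^{\beta}$, whence $u(r_\pm)/u(\eps)\to\varphi'(0)^{\beta}$ after letting $\eps\to0$ and then $\kappa\downarrow1$ (using the uniform convergence theorem for regularly varying functions for the extra factor $\kappa^{\pm1}$). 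Both bounds then converge to the same multiple of $\nu_{A_2}^{\varphi(z)}$; tracking the exponent yields the normalisation $\varphi'(0)^{-\beta}$ of the statement, and summing over $z$ and applying $(\varphi^{-1})_*$ gives $\nu_{A_1}=\varphi'(0)^{-\beta}(\varphi^{-1})_*\nu_{A_2}$.

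The step I expect to be the main obstacle is the bookkeeping of this double limit: one must simultaneously handle the vague convergence of Lemma~\ref{lem:3.25} (which is compatible with the homeomorphism $\varphi$ and with the monotone sandwich only on test sets avoiding the ``bad'' configurations, in the spirit of the open set $\mathcal O$ used in the proof of Lemma~\ref{lem1}), the comparison of $\varphi(D(0,\eps))$ with half-disks, and the regular variation of $u$ near $0$ — which, although essentially contained in the properties of $\mu$, requires upgrading the vague convergence there to convergence of masses in order to legitimately read off the index $\beta$.
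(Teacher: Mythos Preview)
Your proof is correct and follows essentially the same approach as the paper's: both use the conformal invariance of the loop-soup to identify $\varphi(\theta_{A_1}(z))$ with $\theta_{A_2}(\varphi(z))$ in law, then invoke the limiting characterization of Lemma~\ref{lem:3.25} together with the scaling relation $u(c\eps)/u(\eps)\to c^\beta$ to extract the covariance factor $\varphi'(0)^{-\beta}$. The paper's proof is three sentences and simply cites ``the fact that $u(c\eps)/u(\eps)\to c^{\beta}$'' without further justification; you spell out the sandwich argument comparing $\varphi(D(0,\eps))$ with genuine half-disks and derive the regular variation of $u$ from the listed properties of $\mu$, which is exactly what the paper is tacitly using. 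Your final paragraph's worries about the double limit are reasonable caution but not a genuine gap --- the paper treats this passage to the limit as routine.
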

\begin{proof}
The loop-soup itself is conformally invariant. Therefore $\varphi(\theta_{A_1}(z))$ has the same law as $\theta_{A_2}(z')$ where $z'=\varphi(z)$.
The characterization of Lemma \ref{lem:3.25} and the fact that $u(c\eps)/u(\eps)\to c^{\beta}$ imply that 
$\varphi\circ\nu_{A_1}^z=\varphi'(0)^{-\beta}\nu_{A_2}^{z'}$. Since this is true for all $z,z'$, we have proved the lemma.
\end{proof}

We can further upgrade  $\nu_A$ to a measure $\bar\nu_A$ on pinned configurations in $A$.

\begin{definition}\label{def:nu-bar}
Let the measure $\bar\nu_A$ be equal to the image of the measure $\nu_A\otimes\P_A$ under the map $\Phi^{\text{up}}$.
\end{definition}
It is clear that $\bar\nu_A$ also satisfies the same conformal covariance property as $\nu_A$.
\begin{lemma}\label{lem3.25}
Let $A_1, A_2$ be two domains such that there is a conformal map $\varphi$ from $A_1$ onto $A_2$. The measure $\bar\nu_{A_1}$ is equal to $\varphi'(0)^{-\beta}$ times the image of $\bar\nu_{A_2}$ under $\varphi^{-1}$.
\end{lemma}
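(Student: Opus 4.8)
The plan is to transport the conformal covariance of $\nu_A$ (the previous lemma) through the map $\Phi^{\text{up}}$, using that the loop-soup $\P_A$ is conformally invariant. Recall from Definition \ref{def:nu-bar} that $\bar\nu_A$ is the image of $\nu_A\otimes\P_A$ under $\Phi^{\text{up}}$. Here $\P_A$ is a probability measure and $\nu_A$ is $\sigma$-finite (its restriction to the complete clusters encircling a fixed $z\in A$, or to those of diameter at least some fixed $c>0$, is finite), so the product measure and its pushforwards are well defined; as usual it suffices to test everything against a bounded continuous functional supported on configurations whose pinned cluster has diameter $\ge c$.

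The crux is that $\Phi^{\text{up}}$ intertwines with conformal maps. Write $\varphi$ for the conformal map from $A_1$ onto $A_2$, with $\varphi(0)=0$ as implicit in the previous lemma. I claim that for every pinned complete cluster $\theta$ in the support of $\nu_{A_1}$ and every collection $\Gamma$ of loops in $A_1$,
\begin{align*}
\varphi\big(\Phi^{\text{up}}(\theta,\Gamma)\big)=\Phi^{\text{up}}\big(\varphi(\theta),\varphi(\Gamma)\big).
\end{align*}
Indeed $\Phi^{\text{up}}(\theta,\Gamma)=(\theta,\tilde\Gamma)$, where $\tilde\Gamma$ is the set of loops of $\Gamma$ contained in $\H\setminus F(\theta)$, equivalently (as $\Gamma$ already lies in $A_1$) in $A_1\setminus F(\theta)$; since $\varphi$ is a conformal bijection of $A_1$ onto $A_2$ fixing $0$, it carries $F(\theta)\cap A_1$ onto $F(\varphi(\theta))\cap A_2$, and hence maps the loops of $\Gamma$ inside $A_1\setminus F(\theta)$ bijectively onto the loops of $\varphi(\Gamma)$ inside $A_2\setminus F(\varphi(\theta))$; applying $\varphi$ to the first coordinate as well yields the displayed identity.

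Granting this, the lemma is a formal computation. Using the intertwining identity under the integral sign, the conformal covariance $\varphi\circ\nu_{A_1}=\varphi'(0)^{-\beta}\nu_{A_2}$ from the previous lemma, the conformal invariance $\varphi\circ\P_{A_1}=\P_{A_2}$ of the loop-soup, and the fact that the pushforward of a product measure is the product of the pushforwards,
\begin{align*}
\varphi\circ\bar\nu_{A_1}
&=\varphi\circ\Phi^{\text{up}}\circ(\nu_{A_1}\otimes\P_{A_1})
=\Phi^{\text{up}}\circ\big((\varphi\circ\nu_{A_1})\otimes(\varphi\circ\P_{A_1})\big)\\
&=\Phi^{\text{up}}\circ\big(\varphi'(0)^{-\beta}\,\nu_{A_2}\otimes\P_{A_2}\big)
=\varphi'(0)^{-\beta}\,\bar\nu_{A_2}.
\end{align*}
Applying $\varphi^{-1}$ gives that $\bar\nu_{A_1}$ equals $\varphi'(0)^{-\beta}$ times the image of $\bar\nu_{A_2}$ under $\varphi^{-1}$, as claimed.

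The only step that requires genuine care is the intertwining identity: because $A_1$ need not be simply connected, $F(\theta)$ could a priori stick outside $A_1$, so one must check that the pinned complete cluster $\theta$ almost surely does not surround any connected component of $\H\setminus A_1$ — this follows from $\theta$ being pinned at $0$ with outer boundary a pinned SLE-type loop, exactly as in the argument of Lemma \ref{lem:theta-a-pin} — and consequently that only $F(\theta)\cap A_1$ is relevant and is transported correctly by $\varphi$. Together with the routine $\sigma$-finiteness bookkeeping needed to push the infinite product measure forward, this is the main point; the rest is formal.
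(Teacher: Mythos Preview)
Your argument is exactly the natural unpacking of what the paper has in mind: the paper simply asserts that ``it is clear that $\bar\nu_A$ also satisfies the same conformal covariance property as $\nu_A$,'' and your computation via the intertwining $\varphi\circ\Phi^{\text{up}}=\Phi^{\text{up}}\circ(\varphi,\varphi)$, together with the covariance of $\nu_A$ and the conformal invariance of $\P_A$, is precisely how one justifies this.

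One remark on your final paragraph. The worry you raise (that for non--simply-connected $A_1$ the filling $F(\theta)$ might protrude outside $A_1$) is a legitimate one, but the fix you propose is not right: Lemma~\ref{lem:theta-a-pin} concerns the continuity of the outer boundary at $0$ and rules out an accumulation of sub-clusters there, which is a different issue; it does not prevent a pinned cluster built from loops in $A_1$ from surrounding a hole of $A_1$ (indeed this can happen with positive probability). The intertwining identity nonetheless holds, and the cleaner reason is intrinsic: for a loop $\ell\subset A_1$, the condition $\ell\subset\H\setminus F(\theta)$ is equivalent to $\ell$ lying in the component of $A_1\setminus\theta$ whose closure meets $\mathbb{R}\setminus\{0\}$ (the ``outside'' of the pinned loop), and this description is manifestly transported by $\varphi$ since $\varphi$ sends the boundary arc of $A_1$ through $0$ to that of $A_2$. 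So the conclusion stands, only the justification of that last step should be replaced.
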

Now we are ready to state the second decomposition lemma. The proof is quite similar to that of Lemma \ref{lem2}. We leave the details to the reader as an exercise.

\begin{lemma}[Second decomposition]\label{lem:2nd-decomp}
The measure $\bar\nu$ is the image of $\bar\nu_A\otimes \P^{\H\setminus A}$ by the map $U$.
\end{lemma}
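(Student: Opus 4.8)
The plan is to mirror exactly the strategy used for Lemma~\ref{lem2}, replacing the map $V$ (and $V^\delta$) by $W$ (and $W^\delta$), the measure $\rho_A$ by $\nu_A$, and the independent collection $\Gamma_{\H\setminus A}$ of loops entirely contained in $\H\setminus A$ by the collection $\Gamma^{\H\setminus A}$ of loops that do not stay entirely in $A$. Concretely: I would start from the loop-soup $\Gamma$ in $\H$, fix $A\in\A$ and $z\in A$, fix $\eps$ with $0<\eps<d(0,\H\setminus A)$, and work with $\theta^\eps$, the largest complete cluster of $\Gamma$ intersecting $D(0,\eps)$, together with its decomposition coming from $\Gamma = \Gamma_A \cup \Gamma^{\H\setminus A}$. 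Recall from Lemma~\ref{lem:nuz} that $u(\eps)^{-1}P\theta^\eps$ converges vaguely to $\nu$, and from Lemma~\ref{lem:theta-cvg} that $u(\eps)^{-1}P\theta_A^\eps\mathbf 1_{z\in\theta_A^\eps}$ converges to $\nu_A^z$. Upgrading the latter by adding back the independent loop-soup, one gets that $u(\eps)^{-1}$ times the law of the pair $(\theta_A^\eps,\,\text{loops of }\Gamma_A\text{ not in }\theta_A^\eps)$ converges vaguely to $\bar\nu_A$.

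Next, I would let $\tilde\theta^\eps$ denote the complete cluster in $\Gamma$ that contains $\theta_A^\eps$; equivalently, $\tilde\theta^\eps = U^1\big((\theta_A^\eps,\Gamma_A\setminus\theta_A^\eps),\Gamma^{\H\setminus A}\big)$, where $U^1$ is the (continuous) first component of the gluing map $U$ defined before Lemma~\ref{lem2}. The key analytic step is to show that $u(\eps)^{-1}P\tilde\theta^\eps$ also converges vaguely to $\nu$. Since we already know $u(\eps)^{-1}P\theta^\eps\to\nu$, it suffices to control the discrepancy between $\tilde\theta^\eps$ and $\theta^\eps$. On the event $\{z\in\theta_A^\eps\}$, conditionally on $\theta_A^\eps$, the remaining loops $\Gamma_A\setminus\theta_A^\eps$ form an independent loop-soup in $A\setminus\theta_A^\eps$, and adding back the loops of $\Gamma^{\H\setminus A}$ can only merge $\theta_A^\eps$ with loops reaching out of $D(0,\delta)$; arguing as in the proof of Lemma~\ref{lem1} (the event $H^{\eps,\delta}$ of Lemma~\ref{lem:theta-cvg}), the probability that $\tilde\theta^\eps\ne\theta^\eps$ given $\{z\in\theta_A^\eps\}$ is $o_\delta(1)$ uniformly in $\eps$. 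Testing against a bounded continuous $Y$ and running the same three-line estimate \eqref{l1}--\eqref{l3}, then sending $\eps\to0$ and $\delta\to0$, yields $u(\eps)^{-1}\E[Y(\tilde\theta^\eps)\mathbf 1_{z\in\tilde\theta^\eps}]\to\nu[Y(\theta)\mathbf 1_{z\in\theta}]$. Because every pinned complete cluster surrounds a small disc in $A$, ranging over $z$ pins down $\nu$ as the image of $\nu_A\otimes\P^{\H\setminus A}$ under $U^1$.

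Finally, to upgrade from the complete cluster to the whole pinned configuration: by construction $\bar\nu$ is the image of $\nu\otimes\P_\H$ under $\Phi^{\text{up}}$, and $\bar\nu_A$ is the image of $\nu_A\otimes\P_A$ under $\Phi^{\text{up}}$. Decomposing the ambient loop-soup $\P_\H$ as the independent superposition of $\P_A$ (loops entirely in $A$) and $\P^{\H\setminus A}$ (loops not entirely in $A$), and observing that $U$ acts on $((\theta,\Gamma_\theta),\Gamma')$ by gluing $\Gamma'$ and the loops of $\Gamma_\theta$ hitting the new cluster into $\theta'$ while keeping the bookkeeping of the rest, one checks that the image of $\bar\nu_A\otimes\P^{\H\setminus A}$ under $U$ coincides with the image of $\nu\otimes\P_\H$ under $\Phi^{\text{up}}$, i.e.\ with $\bar\nu$.

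\textbf{Main obstacle.} The delicate point, exactly as in Lemma~\ref{lem2}, is the convergence $u(\eps)^{-1}P\tilde\theta^\eps\to\nu$: one must rule out uniformly (in $\eps$, as $\delta\to0$) that the extra loops of $\Gamma^{\H\setminus A}$ attach to $\theta_A^\eps$ a macroscopic piece that comes within distance $\delta$ of $0$ without the whole object already being $\theta^\eps$. This requires the same Poissonian domination argument as in the proof of Lemma~\ref{lem1} — enlarging $\Gamma_A\setminus\theta_A^\eps$ by adding back in a Poissonian way all the loops not entirely in $A$, noting the enlarged complete clusters dominate the relevant ones, and using that the probability of an enlarged cluster of diameter $\ge d(0,\H\setminus A)$ meeting $D(0,\delta)$ is $o_\delta(1)$ — together with the fact that if $W^\delta$ of the configuration lies inside $A$ then it already equals $\theta_A^\eps$. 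Everything else is a routine transcription of the first-decomposition argument.
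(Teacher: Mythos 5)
Your proposal is correct and follows exactly the route the paper intends: the paper's own ``proof'' of Lemma~\ref{lem:2nd-decomp} simply states that the argument is quite similar to that of Lemma~\ref{lem2} and leaves the details as an exercise, and your transcription (replacing $V,V^\delta,\rho_A,\Gamma_{\H\setminus A}$ by $W,W^\delta,\nu_A,\Gamma^{\H\setminus A}$, defining $\tilde\theta^\eps$ as the cluster of $\Gamma$ containing $\theta_A^\eps$, and reusing the $H^{\eps,\delta}$-type estimate from Lemma~\ref{lem:theta-cvg}) is precisely that intended adaptation. No substantive difference or gap to report.
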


\subsubsection{The conformal restriction property of the measure $\overline{\rho}_A$}

Recall that $\bar\rho_A$ is the measure defined in Lemma \ref{lem1}.

\begin{lemma}\label{lem:restriction1}
Let $(\theta,\Gamma)$ be a configuration supported by the measure $\bar\rho_A$.
Let $B, \tilde B$ be relatively closed sets such that $\H\setminus B, \H\setminus\tilde B\in\mathcal{A}$, and that there is a conformal map $\varphi$ from $\H\setminus B$ onto $\H\setminus \tilde B$.
Then the measure $\varphi(\bar\rho_A)$ restricted to the event $\{(\theta\cup\Gamma) \cap B=\emptyset\}$ is equal to $\varphi'(0)^{-\beta}$ times the measure $\bar\rho_{\varphi(A)}$ restricted to the event $\{(\theta\cup\Gamma) \cap \tilde B=\emptyset\}$.
\end{lemma}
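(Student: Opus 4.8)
\textbf{Proof plan for Lemma \ref{lem:restriction1}.}

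The plan is to realize $\bar\rho_A$ (and $\bar\rho_{\varphi(A)}$) simultaneously as vague limits of the explicit approximations built in Lemma \ref{lem1}, and then to transport the restriction property of the loop-soup itself through these limits. Concretely, recall that $u(\eps)^{-1}P\pi_A^\eps$ converges vaguely to $\rho_A$, where $\pi_A^\eps$ is the largest complete cluster in $\Gamma^A$ meeting $D(0,\eps)$, and that $\bar\rho_A$ is obtained by then adjoining (in the sense of $\Phi^{\text{up}}$) the loops of $\Gamma$ that touch $A$ but stay outside the filling of $\pi_A^\eps$, i.e. $u(\eps)^{-1}$ times the law of $(\pi_A^\eps,\Gamma^A\setminus\pi_A^\eps)$ converges vaguely to $\bar\rho_A$, as noted in the proof of Lemma \ref{lem2}. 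So I would fix a bounded continuous test function $Y$ supported on configurations encircling a given point $z\in A$, and compute $u(\eps)^{-1}\E[Y(\pi_A^\eps,\Gamma^A\setminus\pi_A^\eps)\,\mathbf 1_{(\pi_A^\eps\cup(\Gamma^A\setminus\pi_A^\eps))\cap B=\emptyset}]$ in two ways.

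First I would use the restriction/conformal-invariance property of the loop-soup at finite $\eps$. For $A'=\H\setminus B\in\mathcal A$, the collection $\Gamma_{A'}$ of loops of $\Gamma$ staying in $A'$ is a loop-soup in $A'$, and $\varphi(\Gamma_{A'})$ is a loop-soup in $\H\setminus\tilde B$, hence equal in law to $\Gamma_{\H\setminus\tilde B}$, with the Jacobian/covariance factor entering only through the rescaling of $D(0,\eps)$ near $0$: since $\varphi$ is conformal near $0$ with $\varphi(0)=0$, the set $\varphi(D(0,\eps))$ is comparable to $D(0,|\varphi'(0)|\eps)$ up to $o(1)$, and $u(c\eps)/u(\eps)\to c^\beta$. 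On the event $\{(\theta\cup\Gamma)\cap B=\emptyset\}$ the complete cluster $\pi_A^\eps$ and all loops of $\Gamma^A\setminus\pi_A^\eps$ live in $A'$, so $\varphi$ maps this configuration to the corresponding object built from the loop-soup in $\H\setminus\tilde B$ restricted to loops touching $\varphi(A)$, intersected with the analogous no-touch event for $\tilde B$. Taking $\eps\to 0$ on both sides, the left side converges (after the $u(\eps)^{-1}$ normalization) to $\varphi(\bar\rho_A)[\,Y\,\mathbf 1_{(\theta\cup\Gamma)\cap B=\emptyset}]$ and the right side to $\varphi'(0)^{-\beta}$ times $\bar\rho_{\varphi(A)}[\,Y\,\mathbf 1_{(\theta\cup\Gamma)\cap \tilde B=\emptyset}]$ — the covariance exponent $\beta$ being exactly the one produced by $u(c\eps)/u(\eps)\to c^\beta$, consistently with Lemma \ref{lem:conf-cov} and Lemma \ref{lem3.25}. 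Since such $Y$ (ranging over $z\in A$) determine the measure $\bar\rho_A$, as observed in the Remark after Lemma \ref{lem1}, this gives the claimed identity.

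The main obstacle, as in the proof of Lemma \ref{lem1}, is the matching of the ``largest complete cluster meeting $D(0,\eps)$'' under the two descriptions on the no-touch event: a priori $\varphi(\pi_A^\eps)$ need not be exactly the largest complete cluster of the image loop-soup meeting the image of $D(0,\eps)$, and conversely there could be rare configurations where a big loop straddling $\partial B$ or connecting $D(0,\delta)$ to $B$ disturbs the picture. I would handle this exactly as in Lemma \ref{lem1}: introduce the auxiliary maps $V^\delta$ (restricting to clusters meeting $D(0,\delta)$), which are continuous on the generic set $\mathcal O$ of configurations having no loop tangent to $\partial A$ without crossing, show that the relevant ``good'' events have conditional probability $1-o_\delta(1)$ given $\{z\in\pi_A^\eps\}$ (the bad event forces a complete cluster of diameter $\ge d(0,\H\setminus A)$ meeting $D(0,\delta)$, which has probability $o_\delta(1)$, together with the fact that $\varphi$ is uniformly close to a linear map on a neighborhood of $0$), pass to the limit $\eps\to0$ using Lemma \ref{lem:nuz} and the continuity of $V^\delta$, then $\delta\to0$. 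The only genuinely new bookkeeping relative to Lemma \ref{lem1} is carrying the conformal factor $\varphi'(0)^{-\beta}$ through, which is forced by the scaling $u(c\eps)/u(\eps)\to c^\beta$ and is already present in Lemmas \ref{lem:conf-cov} and \ref{lem3.25}; I would therefore present the argument by reduction to those lemmas plus the finite-$\eps$ restriction property of the loop-soup, and leave the $V^\delta$-approximation details to the reader exactly as the paper does for the analogous decomposition lemmas.
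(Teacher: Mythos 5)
Your overall strategy (realize $\bar\rho_A$ as the $u(\eps)^{-1}$-limit of the law of $(\pi_A^\eps,\Gamma^A\setminus\pi_A^\eps)$, transport the restriction at finite $\eps$ through the conformal invariance of the loop-soup, extract the exponent from $u(c\eps)/u(\eps)\to c^\beta$, and control the ``largest cluster'' bookkeeping by $V^\delta$-type good events) is in the spirit of the paper's technical toolbox, but it is not the paper's proof of this lemma, and as written it has a concrete gap at the one step that actually carries the content. The paper proves Lemma \ref{lem:restriction1} softly: it splits the one-point pinned configuration into the four collections $C_1,\dots,C_4$ (touching $A$ only, $B$ only, both, neither), uses the \emph{first} decomposition (Lemma \ref{lem2}) for $A$ and, crucially, the \emph{second} decomposition (Lemma \ref{lem:2nd-decomp}) applied to $\H\setminus B$ to get that the $B$-touching loops are an independent $\P^{B}$-factor and that $C_1\cup C_4$ is a $\bar\nu_{\H\setminus B}$-configuration; independence of $C_1$ and $C_3$ then reduces everything to the covariance Lemma \ref{lem3.25}. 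Your plan never invokes (nor re-proves) this second decomposition, and that is exactly where your sketch breaks down.

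Concretely, the problematic sentence is ``$\varphi$ maps this configuration to the corresponding object built from the loop-soup in $\H\setminus\tilde B$ restricted to loops touching $\varphi(A)$, intersected with the analogous no-touch event for $\tilde B$.'' At finite $\eps$, restricting the law of the configuration built from $\Gamma^{A}$ to the event that it avoids $B$ does \emph{not} produce the $\tilde B$-restricted analogue on the image side; it factorizes, by Poissonian independence, as $\P[\text{no loop of }\Gamma\text{ touches both }A\text{ and }B]$ times the law of the object built from the loop-soup \emph{in} $\H\setminus B$ (loops touching $A$), and only this second factor is transported by $\varphi$ (with the $u$-ratio exponent) onto the object built from a loop-soup in $\H\setminus\tilde B$. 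To recognize the latter as $\bar\rho_{\varphi(A)}$ restricted to the avoid-$\tilde B$ event you need the same factorization on the image side, whose constant is $\P[\text{no loop touches both }\varphi(A)\text{ and }\tilde B]$. These two masses concern loops of the full $\H$-soup that \emph{exit} $\H\setminus B$ (resp.\ $\H\setminus\tilde B$), so they are not related by the conformal invariance of the loop measure and do not cancel; your ``two ways'' of computing the same limit therefore do not match as claimed without explicitly tracking these no-connection weights. The paper's route handles precisely this point through the independence of the $B$-touching loops ($C_3$, whose absence is a finite-mass event that factors out) from $C_1$, i.e.\ through the second decomposition; to complete your argument you would either have to prove that statement (which is essentially re-deriving Lemma \ref{lem:2nd-decomp} by the same $\eps$-machinery) or carry the two exponential factors through and observe that they are harmless only after the conditioning/renormalization performed in the subsequent proposition. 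In addition, the convergence of the \emph{restricted} approximating measures to the \emph{restricted} limit (the indicator of avoiding $B$ is not continuous) needs its own continuity-set argument analogous to the set $\mathcal{O}$ in Lemma \ref{lem1}; you flag this, but it is additional work the paper's soft proof avoids entirely.
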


\begin{figure}[h!]
\centering
\includegraphics[width=0.6\textwidth]{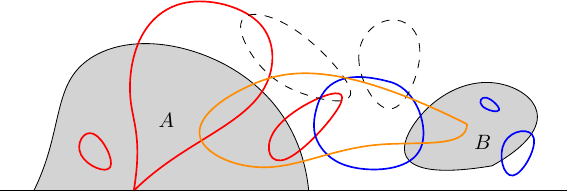}
\caption{The 4 different sets of loops: $C_1$ is drawn in red, $C_2$ in blue, $C_3$  in yellow and $C_4$  in dashed black.
We only draw the outer boundaries of the complete clusters composed of the loops in each of the collections $C_1,\cdots,C_4$.}
\label{fig:4sets}
\end{figure}

\begin{proof}
A one-point pinned configuration supported by $\bar\nu$ is the union of 4 disjoint collections of loops:
the collection $C_1$ of loops that touch $A$ but not $B$, the collection $C_2$ of loops that touch $B$ but not $A$, the collection $C_3$ of loops that touch both $A$ and $B$, and the collection $C_4$ of loops that touch neither $A$ nor $B$ (see Figure \ref{fig:4sets}).

By the first decomposition lemma applied to the set $A$, the collection $C_1\cup C_3$ follows the measure $\bar\rho_A$ and the collection $C_2\cup C_4$ is an independent loop-soup in $\H\setminus A$. 
The event $\{(C_1\cup C_3)\cap B=\emptyset\}$ is the same as the event that $C_3=\emptyset$.

By the second decomposition lemma applied to the set $\H\setminus B$, the collection $C_1\cup C_4$ follows the measure of $\bar\nu_{\H\setminus B}$ and is independent of the collection $C_2\cup C_3$. 

The two decompositions imply that the four sets $C_1,C_2,C_3,C_4$ are independent from each other.
In particular, the collection $C_1\cup C_3$ conditioned on the event $C_3=\emptyset$ is simply the collection $C_1$.

The same arguments apply for the pinned configuration in $\H\setminus \tilde B$ and we denote the corresponding sets by $\tilde C_1,\cdots, \tilde C_4$.
The lemma boils down to proving that the measure of $\varphi(C_1)$ is equal to $\varphi'(0)^{-\beta}$ times the measure of $\tilde C_1$.
However we know by Lemma \ref{lem3.25} that the measure of the pinned configuration $\varphi(C_1 \cup C_4)$ is equal to $\varphi'(0)^{-\beta}$ times the measure of $(\tilde C_1\cup \tilde C_4)$.
Note that $\varphi(C_1)$ is the image of $\varphi (C_1\cup C_4)$ under the map of keeping only those loops that intersect $A$. So is $\tilde C_1$ the image of $(\tilde C_1\cup \tilde C_4)$ under the same map. We can thus conclude.
\end{proof}

\section{The glued configuration}\label{sec:glued}
In this section, we will first define the two-point pinned configuration and the glued configuration.
Then we will make a decomposition of the glued configuration and prove the conformal restriction property of the boundary-touching loops in the glued configuration, thus proving the main theorem.

\subsection{Markovian exploration and the glued configuration}

In this section, we will continue to perform the Markovian exploration on the one-point pinned configuration in order to obtain at first the two-point pinned configuration and then the glued configuration.
This procedure is also explained for CLE in \cite{MR2979861}.
In fact,  the one-point pinned configuration satisfies the same CLE-type of restriction property as the original loop-soup and we can again explore it from the boundary. 
In the following, we describe the exploration process, but omit the proofs that can either be found in  \cite{MR2979861},  or are similar to those in \S\,\ref{sec:2.1}.

Note that the  $\bar\nu$-mass of the pinned configurations where the pinned complete cluster intersects the vertical half-line $1+i\R^+$ is finite.
\begin{definition}
Let $Q^0$ to be the renormalized probability measure of $\bar\nu$ restricted to such configurations.
\end{definition}
Let $(\theta_1,\Gamma_1)$ be a one-point pinned configuration with the law $Q^0$.
We can explore $(\theta_1,\Gamma_1)$  by moving upwards the  half-line $1+i\R^+$, see Figure \ref{2point-pinned}. Let $T=\inf\{t\ge 0, 1+ti \in \theta_1\}$ be the first moment that the half-line intersects $\theta_1$. Let $\tilde A$ be the closure of the union of $[1,1+Ti)$ with all the complete clusters that it intersects. Let $H^0$ be the unbounded connected component of $\H\setminus\tilde A$. Let $\Psi$ be the conformal map from $H^0$ onto $\H$ which sends $0,1+Ti$ to $0,1$ with $\Psi'(0)=1$. Then $(\Psi(\theta_1), \Psi(\Gamma_1))$ is a two-point pinned configuration.
\begin{definition}
Let $Q$ denote the probability measure of the two-point pinned configuration  $(\Psi(\theta_1), \Psi(\Gamma_1))$.
\end{definition}

\begin{figure}[h!]
\centering
\includegraphics[width=\textwidth]{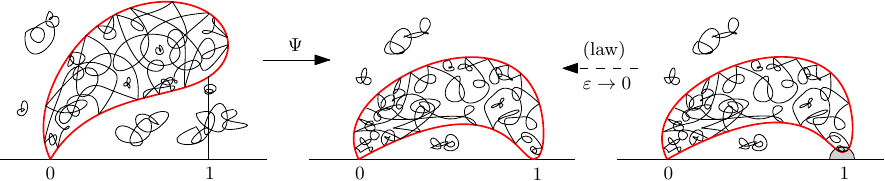}
\caption{The exploration of the one-point pinned configuration and the Definition of the measure $Q$ on two-point pinned complete clusters.}
\label{2point-pinned}
\end{figure}

Similarly to \S\,\ref{sec:2.1}, the law of $\Psi(\theta_1)$ does not depend on the map $\Psi$ and can be obtained as the following limit.
\begin{lemma}\label{lem:3.2}
The law $Q^0$ conditioned on the event $\{\theta_1\cap D(0,\eps)\not=\emptyset\}$ converges as $\eps\to 0$ to $Q$.
\end{lemma}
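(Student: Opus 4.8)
The plan is to mimic the convergence argument that was used to define $\nu(i)$ and $\nu$ in Section \ref{sec:2.1} (in particular Lemma \ref{lem:def1} and Lemma \ref{lem:3.4}), now applied one level up to the one-point pinned configuration instead of to the original loop-soup. The key point is that the one-point pinned configuration measure $\bar\nu$ (or its renormalized restriction $Q^0$) enjoys exactly the same CLE-type Markovian restriction property as the original loop-soup $\Gamma$: this is what allows the very definition of the exploration along $1+i\R^+$ in the first place, and it is precisely what makes the small-scale exploration maps of \cite{MR2979861} applicable here.

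First I would recall that the exploration of $(\theta_1,\Gamma_1)$ along the half-line $1+i\R^+$ can, as in \cite{MR2979861}, be approximated by iterating $\eps$-discrete exploration maps $\Psi^\eps$: at each step one discovers all the complete clusters in the configuration that intersect a small half-disc $D(y,\eps)$ around a well-chosen point $y$ approximating the to-be-explored part of $[1,1+i\infty)$, maps the relevant connected component back to $\H$, and continues; one stops at the first step where the discovered clusters meet $\theta_1$, and composes with a final rotation/scaling normalization so that the endpoint is sent to $1$ with derivative $1$ at $0$. As in Section \ref{sec:2.1}, each step is i.i.d.\ modulo the choice of $y$, so the law of $\Psi^\eps(\theta_1,\Gamma_1)$ is the same as the law of $(\theta_1,\Gamma_1)$ conditioned on $\{\theta_1\cap D(0,\eps)\neq\emptyset\}$ (here one uses that the conditioning event depends only on the outer boundary of $\theta_1$, which is a CLE-type pinned loop, so the loop-soup decomposition results transfer verbatim). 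Meanwhile the conformal maps $\Psi^\eps$ converge a.s.\ to the limiting map $\Psi$ as $\eps\to 0$ — this is the CLE convergence statement of \cite{MR2979861}, which only concerns the outer boundary.

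Next I would upgrade this convergence of conformal maps to convergence of the whole configuration (not just the outer boundary) in the metric $d^*$ on countable collections of loops introduced in Section \ref{sec:2.1}: for any fixed $\delta>0$ only finitely many loops in $\Gamma_1$ have diameter exceeding $\delta$ and they all lie in a proper compact subset of $H^0$, on which $\Psi^\eps\to\Psi$ uniformly; summing the contributions over dyadic scales as in the definition of $d^*$ gives $\Psi^\eps(\theta_1,\Gamma_1)\to\Psi(\theta_1,\Gamma_1)$ a.s. Combining this with the identification of the law of $\Psi^\eps(\theta_1,\Gamma_1)$ as the conditioned law yields exactly the statement of the lemma: the law of $(\theta_1,\Gamma_1)$ under $Q^0$ conditioned on $\{\theta_1\cap D(0,\eps)\neq\emptyset\}$ converges as $\eps\to0$ to $Q$, the law of $(\Psi(\theta_1),\Psi(\Gamma_1))$.

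The main obstacle — and the reason the paper explicitly says it will omit details similar to Section \ref{sec:2.1} — is not any of the conformal-map convergence (which is inherited directly from \cite{MR2979861}) but rather checking carefully that the extra loop structure inside and around the complete cluster behaves well: one must verify that the $\eps$-discrete exploration maps act on the \emph{configuration} $(\theta_1,\Gamma_1)$ in an i.i.d.\ way modulo $y$ (which requires knowing the one-point pinned configuration satisfies the CLE restriction property, with the $\Gamma_1$-loops going along for the ride), and that no pathologies occur from loops straddling the boundaries of the discovered regions — the same kind of ``no loop touches $\partial A$ without crossing it'' genericity used around Lemma \ref{lem1}. Once these points are granted, the argument is a routine repetition of the one that produced Lemma \ref{lem:def1} and Lemma \ref{lem:3.4}.
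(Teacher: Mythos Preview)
Your proposal is correct and is exactly the approach the paper indicates: the paper omits the proof entirely, stating that it ``can either be found in \cite{MR2979861}, or [is] similar to those in \S\,\ref{sec:2.1}'', and your argument faithfully reproduces the discrete-exploration/convergence-of-maps scheme of that section applied one level up to the pinned configuration. One small remark: the half-disc in the conditioning event should be $D(1,\eps)$ rather than $D(0,\eps)$ (since $\theta_1$ is already pinned at $0$ and the exploration starts from $1$); this appears to be a typo in the stated lemma that you have carried over.
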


It can again be proved that the two-point pinned configuration with the law $Q$ still satisfies the CLE-type restriction property.  
Let $(\theta_2, \Gamma_2)$ be a two-point pinned configuration with the law $Q$. Let $\gamma^-$ denote the lower boundary of the two-point pinned complete cluster  $\theta_2$ ($\gamma^-$ has $0$ and $1$ as extremities, see Figure \ref{fig:glued}). 
 We denote by $\Gamma^+$ all the loops in the two-point configuration which are above $\gamma^-$ (including those loops that touch $\gamma^-$). We define $\varphi^-$ to be the conformal map from the unbounded connected component of $\H\setminus \gamma^-$ onto $\H$ that preserves the points $0,1$ and such that $(\varphi^-)'(0)=1$.
\begin{definition}
Let $Q^g$ be the probability measure of the glued configuration $\varphi^-(\Gamma^+)$ and we call $\varphi^-(\theta_2)$  the glued complete cluster.
\end{definition}
The CLE-type loop-restriction property of the two-point pinned configuration implies that
$\varphi^-(\Gamma^+)$ is independent of $\gamma^-$.

\begin{figure}[h!]
\centering
\includegraphics[width=0.78\textwidth]{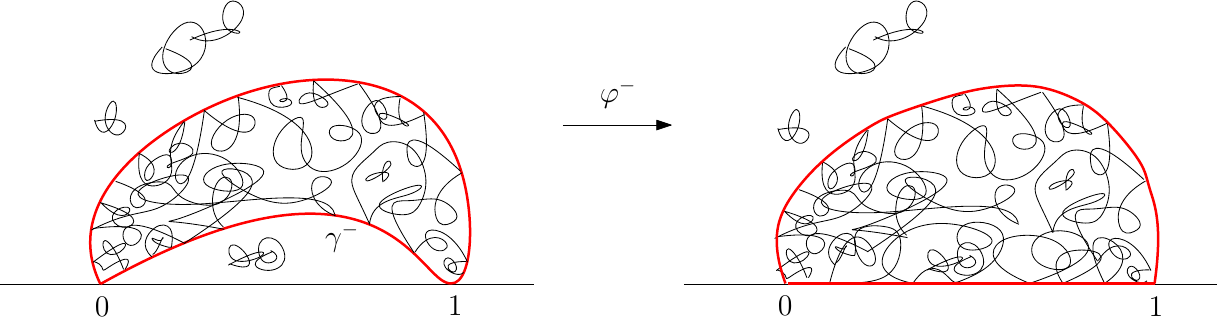}
\caption{The map $\varphi^-$ and the glued configuration.}
\label{fig:glued}
\end{figure}

Note that the measures $Q, Q^g$ are also invariant under all conformal mappings from $\H$ onto itself that leave the points $0,1$ invariant. This is due to the conformal covariance property of $\nu$ and the fact that $Q, Q^g$ are normalized to be probability measures.

Now, similarly to Lemma \ref{lem:3.2}, we want to claim that the measure $Q^g$  is  the measure $Q^0$ conditioned on the event that its one-point pinned complete cluster has $[0,1]$ as a part of its boundary. 
This event has zero probability and we have to make sense of it by taking the limit.
Let $(\theta_1, \Gamma_1)$ be a one-point pinned configuration with the law $Q^0$.
Note that the outer boundary of the pinned complete cluster $\theta_1$ is a simple loop, and hence can be parametrized in the trigonometric orientation by a continuous function $\gamma: \R\to\R^2$ such that $\gamma(0)=0^+, \gamma(\infty)=0^-$. 
Let $T=\inf\{t\ge 0, \gamma(t)\in[1,1+2\delta i]\}$ with the convention that $\inf\emptyset=\infty$. Let $E_\delta$  be the event that the complete cluster $\theta_1$ intersects the line segment $[1,1+2\delta i]$ (so $T<\infty$) and that $\gamma([0,T))$ is a subset of the rectangle $R_\delta=[-\delta,1]\times [0,2\delta]$, namely the event that the first part of the boundary of $\theta_1$ stays close to $[0,1]$ (see Figure \ref{fig:e-delta}).
\begin{lemma}\label{lem:another-glue}
Let $ Q^\delta$ be the measure $Q^0$ conditioned on the event $E_\delta$. Then $Q^\delta$ converges as $\delta\to 0$ to the glued configuration $Q^g$. 
\end{lemma}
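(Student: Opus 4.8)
The plan is to mimic exactly the strategy already used in passing from $\theta$ conditioned on $E_\eps$ to its conformally-uniformized version (Lemma~\ref{lem0}–Lemma~\ref{lem02}) and in Lemma~\ref{lem:3.2}: namely, to exhibit on the event $E_\delta$ a conformal map that sends the configuration onto (something close to) a glued configuration of law $Q^g$, and to argue that this map converges to the identity on compacts as $\delta\to 0$. Concretely, I would first record the structural fact, inherited from the CLE restriction property of $Q^0$ and the decomposition lemmas of Section~3, that conditionally on the lower boundary $\gamma^-$ of $\theta_1$ between $0$ and $1$, the collection of loops above $\gamma^-$ — suitably uniformized — has a fixed law independent of $\gamma^-$. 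Precisely: on $E_\delta$, let $\gamma^-_\delta$ denote the portion of $\partial\theta_1$ traced from $0$ up to the hitting time $T$ of $[1,1+2\delta i]$, let $\varphi^-_\delta$ be the conformal map from the unbounded component of $\H\setminus\gamma^-_\delta$ onto $\H$ fixing $0$ and sending the endpoint $\gamma(T)$ to $1$, normalized by $(\varphi^-_\delta)'(0)=1$. By the two-point/one-point pinned restriction property (the analogue for $Q^0$ of the CLE exploration facts recalled in \S\,\ref{sec:2.1} and used to define $Q$ and $Q^g$), the image $\varphi^-_\delta(\Gamma^+_\delta)$ of all loops lying above $\gamma^-_\delta$ is \emph{exactly} distributed as a glued configuration of law $Q^g$, and is independent of $\gamma^-_\delta$. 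This is the same manoeuvre that produced $Q^g$ from a two-point pinned configuration, only now carried out directly inside $Q^0$ restricted to $E_\delta$.

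Given this, the lemma reduces to the deterministic statement that on $E_\delta$ the map $\varphi^-_\delta$ — and hence $(\varphi^-_\delta)^{-1}$ — converges to the identity uniformly on compact subsets of $\H$ as $\delta\to 0$, together with the fact that the loops in $\Gamma_1$ \emph{not} lying above $\gamma^-_\delta$ all lie inside $R_\delta$ and therefore disappear in the limit. For the first point: on $E_\delta$ the curve $\gamma^-_\delta$ is contained in the rectangle $R_\delta=[-\delta,1]\times[0,2\delta]$ and connects $0$ to a point of $[1,1+2\delta i]$; standard estimates on conformal maps (e.g. via the Beurling estimate / Carathéodory kernel convergence, exactly as the uniform closeness of $f_\gamma^{-1}$ to the identity was invoked in Lemma~\ref{lem01}) give that any conformal map of $\H\setminus\gamma^-_\delta$ onto $\H$ with the stated normalization at $0$ and $1$ tends to the identity on compacts. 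For the second point: the loops of $\Gamma_1$ below or touching $\gamma^-_\delta$ but not counted in $\Gamma^+_\delta$ are trapped in the thin rectangle $R_\delta$, so with probability tending to $1$ under $Q^\delta$ there are no macroscopic such loops, and under the metric $d^*$ on collections of loops their contribution vanishes. Combining, $(\varphi^-_\delta)^{-1}$ applied to a $Q^g$-configuration converges in $d^*$ to that same configuration, which yields $Q^\delta\to Q^g$.

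The main obstacle is the usual one in this circle of arguments: making the convergence $\varphi^-_\delta\to\mathrm{id}$ on compacts genuinely \emph{uniform in the conditioning}, i.e.\ controlling it for the random curve $\gamma^-_\delta$ under the conditioned law $Q^\delta$ rather than for a fixed deterministic curve in $R_\delta$. One must check that the event $E_\delta$ does not conspire to make $\gamma^-_\delta$ pathological (e.g.\ develop long excursions to the real line that would still be allowed inside $R_\delta$ in principle but are suppressed by the rectangle constraint); here the constraint that $\gamma([0,T))\subset R_\delta$ does the job, since it forces $\gamma^-_\delta$ into a set of diameter $O(1)$ but of height $O(\delta)$, and Beurling-type bounds then give harmonic-measure control uniform over all such curves. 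A secondary, more bookkeeping-type point is to verify that the normalization of $\varphi^-_\delta$ (fixing $0$ and the moving endpoint $\gamma(T)\to 1$) is compatible in the limit with the normalization defining $Q^g$ (fixing $0$ and $1$ with derivative $1$ at $0$); this is immediate once $\gamma(T)\to 1$, which holds on $E_\delta$ since $\gamma(T)\in[1,1+2\delta i]$. Since all of these steps run parallel to arguments already carried out in \S\,\ref{sec:2.1} and in the proof of Lemma~\ref{lem:3.2}, I would present the proof briefly and refer back to those for the repeated estimates, exactly as the paper does elsewhere ("we leave the details to the reader").
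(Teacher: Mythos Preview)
Your approach matches the paper's in structure---produce a conformal map that sends the $E_\delta$-conditioned configuration onto something of law exactly $Q^g$, then argue the map tends to the identity as $\delta\to 0$---but there is one point where you diverge, and it creates a subtle circularity. The paper uses the \emph{two-step} composition $\varphi^-\circ\Psi$: first the Markovian exploration $\Psi$ along $[1,1+2\delta i]$ (which absorbs the segment together with any complete clusters of $\Gamma_1$ it meets before reaching $\theta_1$) to obtain an honest two-point pinned configuration of law $Q$, and then the lower-boundary map $\varphi^-$ to obtain the glued configuration. With this choice the two facts you need---that the image has law $Q^g$, and that it is independent of $\gamma([0,T])$ (hence unaffected by conditioning on $E_\delta$)---are \emph{immediate from the definitions} of $Q$ and $Q^g$.

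Your one-step map $\varphi^-_\delta$ instead slits directly along $\gamma^-_\delta$, whose endpoint $\gamma(T)$ lies in the interior of $\H$, not on $\R$. The assertion that $\varphi^-_\delta(\Gamma^+_\delta)$ has law exactly $Q^g$ is then not the definition; it is essentially the content of Lemma~\ref{lem:edge} (cutting along the boundary of the one-point pinned cluster yields $Q^g$), which in the paper is stated \emph{after} the present lemma and whose proof sketch explicitly invokes the idea of conditioning the boundary to stay near a prescribed path---i.e., the very device of Lemma~\ref{lem:another-glue}. So as written your justification risks being circular. The fix is cheap: replace your $\varphi^-_\delta$ by the paper's $\varphi^-\circ\Psi$; then the law-$Q^g$ claim holds by construction, and your remaining steps (both maps close to the identity on $E_\delta$ since everything explored sits in the thin rectangle; loops trapped below $\gamma^-_\delta$ lie in $R_\delta$ and vanish in the $d^*$ metric) go through exactly as you outlined and coincide with the paper's argument.
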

\begin{figure}[h!]
\centering
\includegraphics[width=0.45\textwidth]{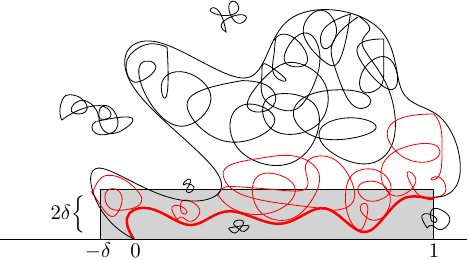}
\caption{One-point pinned configuration conditioned on $E_\delta$.}
\label{fig:e-delta}
\end{figure}

\begin{proof}
Let $(\theta_1^\delta,\Gamma_1^\delta)$ be a one-point pinned configuration with the law $Q^\delta$.
We perform the gluing-down procedure that was explained above: we first explore  $(\theta_1^\delta,\Gamma_1^\delta)$ along $[1,1+2\delta i]$ until the first time we hit the pinned complete cluster and then map it down by $\Psi$ to get the two-point pinned configuration; then we again map it down by $\varphi^-$ to get the glued configuration $(\theta_g^\delta,\Gamma_g^\delta)$. The configuration $(\theta_g^\delta,\Gamma_g^\delta)$ is independent of $\gamma([0,T])$. However the condition on $E_\delta$ is a condition solely on $\gamma([0,T])$. Hence $(\theta_g^\delta,\Gamma_g^\delta)$ has the same law as a glued configuration obtained from an unconditioned one-point pinned configuration. Therefore $(\theta_g^\delta,\Gamma_g^\delta)$ has the law $Q^g$.

Now it is enough to show that  for all $\eps>0$, the probability that the $d^*$ distance between 
$(\theta_g^\delta,\Gamma_g^\delta)$ and  $(\theta_1^\delta,\Gamma_1^\delta)$ be greater than $\eps$ decays to $0$ as $\delta\to 0$. For this, it is enough to show that when $\delta\to 0$, the two conformal maps $\Psi$ and $\varphi^-$ that we applied in the gluing procedure are both very close to the identity with probability tending to $1$. This can be deduced from basic properties of the loop-soup/CLE (note that $\Gamma_1^\delta$ is just an ordinary loop-soup in $\H\setminus\theta^\delta_1$). 
\end{proof}

To end this section, we describe another way to obtain the glued configuration with the law $Q^g$, which is by cutting along the boundary of a one-point pinned complete cluster. This was stated and proved in \cite[Lemma 6.4]{MR2979861} for the CLE case.  We will state in Lemma \ref{lem:edge} the version for the loop-soup case. The proof is the same, hence we do not repeat the details. The idea is that instead of exploring along the vertical half-line $1+i\R^+$, we can explore along any deterministic path in $\H$ with starting point on the boundary, and this leads to the same glued measure. For all simple paths that follow the dyadic grids, conditionally on the fact that the first part of the boundary of the pinned complete cluster is close to this dyadic path, the gluing down procedure is very close to the procedure of cutting along that part of the boundary of the pinned complete cluster.

More precisely, let $(\theta_1,\Gamma_1)$ be sampled with the law $Q^0$. Let $\gamma$ be the outer boundary of the complete cluster $\theta_1$. We give $\gamma$ the following parametrization (see Figure \ref{fig:glued-decomp}).
We orient $\gamma$ ``counterclockwise'', starting from $0^+$ and ending at $0^-$. Fix some $r\in(0,1]$. 
Let $z_0$ be the first intersection point of $\gamma$ with $\{z: |z|=r\}$. We define $b_0:=b_0(\gamma)$ to be the initial part of the loop between $0$ and $z_0$, and we call $e_0:=e_0(\gamma)$ the end-part of the loop $\gamma$ between $z_0$ and $0$.
Let $h_0$ denote the conformal mapping from $\H\setminus b_0$ onto $\H$ normalized by $h_0(z_0)=1, h_0(0^-)=0,h_0(\infty)=\infty$. The image of $e_0$ under $h_0$ is a simple path from $1$ to $0$ in $\H$ that we now call $\xi$. We parametrize $\xi$ in such a way that, the image of $\xi([0,t])$ under the conformal map $z\mapsto 1-1/z$ has half-plane capacity $t$. We denote by $f_t$ some given conformal map from $\H\setminus\xi([0,t])$ onto $\H$ that sends the points $0,\, \xi(t)$ to $0,1$.
Let $h_t=f_t\circ h_0$.
We can now state the following lemma.

\begin{lemma}\label{lem:edge}
For all $t\ge 0$, the image under $h_t$ of $(\theta_1,\Gamma_1)$ has the law $Q^g$.
\end{lemma}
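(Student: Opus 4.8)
The plan is to reduce this statement to Lemma \ref{lem:another-glue} (and the construction of $Q^g$ via the vertical exploration) by a standard ``deterministic-path vs. dyadic-path'' approximation argument, exactly in the spirit of \cite[Lemma 6.4]{MR2979861}. The key point is that the glued configuration $Q^g$ can be obtained by exploring a one-point pinned configuration along \emph{any} deterministic curve issued from a boundary point (not just the vertical half-line $1+i\R^+$), and that exploring along the actual boundary $e_0$ of the pinned complete cluster is a degenerate limit of such explorations.

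First I would set up the ``general deterministic exploration'' version of $Q^g$: fix a deterministic simple path $\beta$ in $\overline\H$ starting at a boundary point, let $(\theta_1,\Gamma_1)$ have law $Q^0$, stop $\beta$ at the first time $\sigma$ it hits $\theta_1$, let $\Psi_\beta$ map the unbounded component of $\H\setminus(\text{hull of }\beta([0,\sigma])\cup\text{clusters it meets})$ back to $\H$ fixing $0,1$ appropriately, and then apply the lower-boundary map $\varphi^-$ as in the definition of $Q^g$. Using the CLE-type restriction property of $Q^0$ (which holds for the one-point pinned configuration, since $\Gamma_1$ is just a loop-soup in $\H\setminus\theta_1$ and $\theta_1$'s outer boundary is a pinned SLE loop satisfying the CLE restriction property), one checks that the resulting glued configuration is independent of the explored boundary piece and has law $Q^g$, independently of the choice of $\beta$; this is the analogue of the conformal-invariance-of-the-target argument in \cite{MR2979861}. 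In particular one may take $\beta$ to run along a dyadic grid.

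Next I would run the approximation. Given $r\in(0,1]$ and the parametrization $\xi=h_0(e_0)$ of (the image of) the end-part of the boundary, fix $t\ge 0$. Cover $\xi([0,t])$ by a fine dyadic lattice path $\beta_n$ staying within distance $2^{-n}$ of $\xi([0,t])$ and conditioning on the event $E_{\delta_n}$ that the beginning part $b_0$ of $\gamma$ together with the subsequent boundary stays in a $2^{-n}$-neighbourhood of the concatenation $[0,z_0]\cup$(lift of $\beta_n$). On this event the gluing-down along $\beta_n$ differs from the operation of cutting along $b_0$ followed by $\xi([0,t])$ only through the two conformal maps $\Psi_{\beta_n}$ and $\varphi^-$, both of which are within $o_n(1)$ of the identity in the uniform-on-compacts sense (by the same elementary Loewner/CLE estimates used in Lemma \ref{lem:another-glue}: the hull being explored is squeezed into a thin neighbourhood of a slit). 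Hence, as in Lemma \ref{lem:another-glue}, conditioning on $E_{\delta_n}$ the configuration obtained by cutting along $h_0$ and then $f_t$ — i.e. the image under $h_t=f_t\circ h_0$ of $(\theta_1,\Gamma_1)$ — converges in the $d^*$-metric to the glued configuration obtained by the deterministic exploration along $\beta_n$, which has law $Q^g$. Since $Q^g$ does not depend on $n$, the limit is $Q^g$, and since the law of $h_t(\theta_1,\Gamma_1)$ does not depend on the conditioning (the map $h_t$ is a deterministic functional of the boundary $\gamma$ up to the relevant time, and $\Gamma_1$ given that boundary is just a loop-soup, so the unconditioned law is recovered by the same independence argument as in Lemma \ref{lem:another-glue}), we get that $h_t(\theta_1,\Gamma_1)$ has law $Q^g$ for every $t\ge 0$.

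The main obstacle is the bookkeeping in the second step: one must argue carefully that the event $E_{\delta_n}$ has positive probability, that conditioning on it does not disturb the glued configuration (this uses the locality of the exploration and the Markov/restriction property of $Q^0$, exactly as in Lemma \ref{lem:another-glue}), and that the two conformal maps involved really do converge to the identity \emph{uniformly on the compact sets relevant for the $d^*$-metric}, so that the countably many loops of macroscopic size are all transported correctly. Once the framework of Lemma \ref{lem:another-glue} is in place this is routine — which is precisely why the paper says ``the proof is the same, hence we do not repeat in details'' — so I would only spell out the modifications needed to handle a general dyadic target path rather than the vertical line, and otherwise refer to \cite[Lemma 6.4]{MR2979861} and to Lemma \ref{lem:another-glue}.
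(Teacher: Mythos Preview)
Your proposal is correct and follows essentially the same approach as the paper: the paper does not spell out a proof but refers to \cite[Lemma 6.4]{MR2979861} and sketches exactly the two ingredients you use, namely that exploration along any deterministic boundary-anchored path yields the same glued law $Q^g$, and that for dyadic paths one conditions the initial boundary piece to stay close to the path so that the gluing-down procedure approximates cutting along the actual boundary. Your write-up simply unpacks this sketch with the appropriate references to Lemma \ref{lem:another-glue} and the $d^*$-metric bookkeeping.
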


\begin{figure}[h!]
\centering
\includegraphics[width=\textwidth]{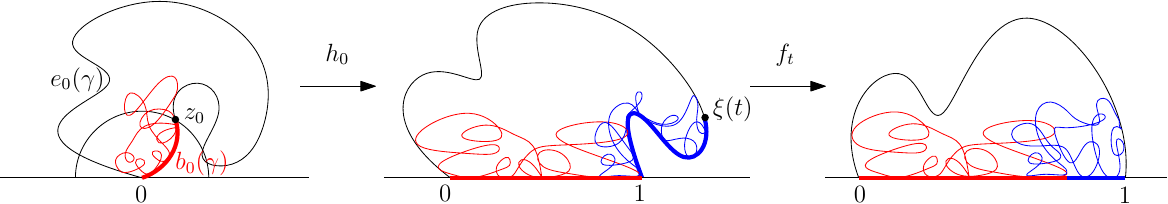}
\caption{The conformal maps $h_0$ and $f_t$.}
\label{fig:glued-decomp}
\end{figure}

\subsection{Decomposition of the glued configuration}

In this section, we are going to decompose the glued configuration with the law $Q^g$. Note that a glued configuration is the union of two collections of loops: the collection $K$ of loops that intersect $[0,1]$ and the collection $\Gamma_0$ of loops that do not intersect $[0,1]$. Our goal is to show that $K$ satisfies conformal restriction property and that the  collection $\Gamma_0$ is just an independent loop-soup in $\H$.
\begin{lemma}[Decomposition of the glued configuration]\label{lem:glued-decomp}
The collections $K$ and $\Gamma_0$ are independent. The law of $\Gamma_0$ is that of a loop-soup in $\H$. 
\end{lemma}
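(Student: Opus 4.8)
The plan is to obtain this decomposition as a limit of the analogous decomposition for the one-point pinned configuration. Recall from Lemma~\ref{lem:another-glue} that the glued measure $Q^g$ arises as the $\delta\to 0$ limit of $Q^\delta$, which is $Q^0$ (the one-point pinned measure renormalized so that the pinned cluster meets $1+i\R^+$) conditioned on the event $E_\delta$ that the outer boundary stays in the rectangle $R_\delta$ until it hits $[1,1+2\delta i]$. First I would fix $\delta>0$ and work under $Q^\delta$. Write the one-point pinned configuration $(\theta_1^\delta,\Gamma_1^\delta)$ as the union of two collections of loops relative to the set $A_\delta := \H\setminus R_\delta$: the collection of loops that stay entirely in $A_\delta$ and the collection of loops that touch $R_\delta$. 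The point is that $E_\delta$ is an event measurable with respect only to the outer boundary of $\theta_1^\delta$ near $[0,1]$, and conditioning on $E_\delta$ does not destroy the product structure of the measure $\bar\nu$ (equivalently $Q^0$) coming from the second decomposition (Lemma~\ref{lem:2nd-decomp}) applied to the set $A_\delta$: the loops staying in $A_\delta$ follow $\bar\nu_{A_\delta}$ and the loops touching $R_\delta$ form an independent loop-soup-type collection $\P^{R_\delta}$ in $\H$, and these are independent of each other and hence the conditioning (which only constrains the cluster structure near $[0,1]$, i.e.\ a function of the loops touching $R_\delta$ together with the cluster they glue) factorizes appropriately.

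Next I would push this through the gluing-down maps $\Psi$ and $\varphi^-$ used in the proof of Lemma~\ref{lem:another-glue}, and then take $\delta\to 0$. In the limit, the rectangle $R_\delta$ shrinks to the segment $[0,1]$, the maps $\Psi,\varphi^-$ converge to the identity, and the collection of loops ``staying in $A_\delta$'' converges to the collection $\Gamma_0$ of loops in the glued configuration that do not intersect $[0,1]$, while the collection of loops touching $R_\delta$ converges to the collection $K$ of loops touching $[0,1]$. The independence of $K$ and $\Gamma_0$ is preserved under the limit (one checks this using the weak convergence already established in Lemma~\ref{lem:another-glue}, evaluating against products of bounded continuous functionals of the two collections, which one can separate because a loop touching $R_\delta$ and a loop avoiding $R_\delta$ are at positive distance uniformly along the approximation). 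For the law of $\Gamma_0$: under $Q^\delta$, the loops avoiding $R_\delta$ in the \emph{upgraded} configuration $\bar\nu_{A_\delta}$-part are, by Definition~\ref{def:nu-bar} and the conformal covariance Lemma~\ref{lem3.25}, exactly a loop-soup in $A_\delta$ (minus the pinned cluster) pushed forward; as $\delta\to 0$ and after applying $\Psi$ and $\varphi^-$ the domain $A_\delta$ opens up to all of $\H$ minus a set of zero Lebesgue measure, so $\Gamma_0$ becomes an ordinary loop-soup in $\H$. Here one uses that a loop-soup in $\H$ assigns zero mass to loops touching a given segment, so removing $[0,1]$ from the domain does not change the law.

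The main obstacle I anticipate is the interchange of limits and the bookkeeping of which collection of loops converges to what: one must verify that, as $\delta\to 0$, loops in $\theta_1^\delta\cup\Gamma_1^\delta$ that touch the shrinking rectangle $R_\delta$ do not ``accumulate'' in a way that spoils the convergence to $K$, and conversely that no loop avoiding $R_\delta$ gets mistakenly counted in $K$ in the limit. This is where one genuinely uses that the gluing map $\varphi^-$ is close to the identity with high probability (already established) together with the fact that the boundary of the glued complete cluster is a simple curve (no double points), so that the family $\Gamma_0$ of ``interior-side'' loops is at positive distance from $[0,1]$ with probability close to one on the relevant scales. Apart from this continuity-of-the-partition issue, the argument is a routine transfer of the second decomposition through a limit, so I would state the convergence lemmas precisely and leave the $\eps$-$\delta$ estimates to the reader, in the same spirit as the proofs of Lemmas~\ref{lem:theta-cvg} and \ref{lem:another-glue}.
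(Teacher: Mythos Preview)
Your overall strategy---realize the glued configuration as a $\delta\to 0$ limit of $Q^\delta$ via Lemma~\ref{lem:another-glue}, decompose at the $\delta$-level, and pass to the limit---is exactly what the paper does. But the decomposition step is set up incorrectly.

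You invoke the \emph{second} decomposition (Lemma~\ref{lem:2nd-decomp}) with $A_\delta:=\H\setminus R_\delta$ and claim that the loops staying in $A_\delta$ carry the pinned structure $\bar\nu_{A_\delta}$ while the loops touching $R_\delta$ form the loop-soup part $\P^{R_\delta}$. This cannot work: both decomposition lemmas require $A\in\mathcal A$, i.e.\ $d(0,\H\setminus A)>0$, and with your choice $\H\setminus A_\delta=R_\delta$ contains a neighbourhood of $0$, so $A_\delta\notin\mathcal A$. More to the point, the pinned complete cluster is pinned at $0$, which lies in $\overline{R_\delta}$; it is impossible for the loops that \emph{avoid} $R_\delta$ to carry the pin. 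Your assignment of which collection is ``pinned'' and which is ``loop-soup'' is therefore reversed, and the claim that $E_\delta$ (a condition on the pinned boundary near $[0,1]$) factors through the loop-soup part does not follow.

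The paper uses the \emph{first} decomposition (Lemma~\ref{lem2}) with $A=R_\delta\in\mathcal A$: the loops touching $R_\delta$ form the pinned configuration $K^\delta$ with law $\bar\rho_{R_\delta}$, and the loops contained in $\H\setminus R_\delta$ form an independent loop-soup $\Gamma_0^\delta$ with law $\P_{\H\setminus R_\delta}$. Since $E_\delta$ is determined by $K^\delta$ alone, conditioning on it leaves $\Gamma_0^\delta$ an honest loop-soup in $\H\setminus R_\delta$, independent of $K^\delta$. The passage to the limit is then handled by a continuous cut-off map $X^\eps$ (separating loops that touch $R_\eps$ from those that do not) rather than via $\Psi,\varphi^-$ directly. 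Once you swap in the correct decomposition lemma and the correct set, your outline becomes essentially the paper's proof.
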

\begin{proof}
We will use the first decomposition lemma for the one-point pinned complete cluster and then combine this with Lemma \ref{lem:another-glue} to conclude.

Let $(\theta_1^\delta,\Gamma_1^\delta)$ be some one-point pinned configuration with the law $Q^\delta$.
Then the union $\theta_1^\delta\cup\Gamma^\delta_1$ is also the union of the following two collections of loops: 
the collection $K^\delta$ of loops that intersect $R_\delta$,
and the collection $\Gamma_0^\delta$ of loops that do not intersect $R_\delta$. 
Note that the measure $Q^\delta$ is in fact equal to $\bar\nu$ restricted to the event $E_\delta$ and then renormalized to have total mass $1$.
By adapting fairly directly the arguments in \cite[Lemma 4]{Qian-Werner}, we can see that the event $E_\delta$ is entirely determined by the loops in $K^\delta$, and hence is independent of the collection $\Gamma_0^\delta$. 
Therefore,
by applying Lemma \ref{lem2} (first decomposition) to $(\theta_1^\delta,\Gamma_1^\delta)$ and  $R_\delta$, we can show that the two sets $K^\delta$ and $\Gamma^\delta_0$ are independent, and moreover,
\begin{itemize}
\item  the collection $K^\delta$ has the law $p^\delta$, where $p^\delta$ is a probability measure defined to be the (infinite) measure $\bar\rho_{R_\delta}$ restricted to the event $E_\delta$ and then renormalized.
\item the collection $\Gamma_0^\delta$ is just a loop-soup in $\H\setminus R_\delta$.
\end{itemize}

Now our goal is to show that the couple $(K^\delta, \Gamma_0^\delta)$ converges in law jointly to the couple $(K,\Gamma_0)$, thus proving the lemma.
Recall that Lemma \ref{lem:another-glue} says that the configuration $K^\delta\cup \Gamma_0^\delta$ converges in law to the configuration $K\cup\Gamma_0$. 
We can then approximate $(K^\delta, \Gamma_0^\delta)$ by the image of $K^\delta\cup \Gamma_0^\delta$ under $X^\eps$, which maps a set $\Lambda$ of loops in $\H$ to the couple $(\Lambda_1, \Lambda_2)$ where $\Lambda_1$ consists of all the loops in $\Lambda$ that intersect the rectangle $R_\eps$ and $\Lambda_2=\Lambda\setminus\Lambda_1$.
 Then the image measure $X^\eps(Q^\delta)$ converges as $\delta\to 0$ to $X^\eps(Q^g)$.
  Then again, as $\eps\to 0$, the measure $X^\eps(Q^g)$ converges to the law of the couple $(K,\Gamma_0)$.
It remains to show that, with high probability, $X^\eps(\theta_1^\delta\cup\Gamma_1^\delta)$ is very close to $(K^\delta, \Gamma_0^\delta)$ with respect to $d^*$, uniformly for all $\delta<\eps$. This is true because the loops that intersect $R_\eps$ but not $R_\delta$ are with high probability very small.
\end{proof}

Now we are going to show that $K$ satisfies conformal restriction. 
\begin{proposition}
The set $K$ satisfies conformal restriction, namely for all closed sets $B, \tilde B$ such that $\H\setminus B, \H\setminus\tilde B \in \mathcal{A}$ and that there is a conformal map $\varphi$ from $\H\setminus B$ onto $\H\setminus\tilde B$, the law of $\varphi(K)$ conditionally on $\{K\cap B=\emptyset\}$ is equal to the law of $K$ conditionally on $\{K\cap \tilde B=\emptyset\}$.
\end{proposition}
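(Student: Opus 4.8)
The plan is to realize $K$ as a $\delta\to 0$ limit of the boundary-touching loops of a one-point pinned configuration conditioned to start close to $[0,1]$, and to transport the conformal restriction property of $\bar\rho_A$ established in Lemma~\ref{lem:restriction1} through this limit. Recall from the proof of Lemma~\ref{lem:glued-decomp} that if $(\theta_1^\delta,\Gamma_1^\delta)$ has law $Q^\delta$ and $K^\delta$ denotes the collection of its loops meeting the rectangle $R_\delta$, then $K^\delta$ has law $p^\delta$ equal to $\bar\rho_{R_\delta}$ restricted to the (configuration-measurable) event $E_\delta$ and renormalized, and $K^\delta$ converges in law to $K$ as $\delta\to0$. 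We may assume $B,\tilde B$ are at positive distance from $[0,1]$ (otherwise $\{K\cap B=\emptyset\}$ or $\{K\cap\tilde B=\emptyset\}$ has probability zero, since the glued complete cluster touches all of $[0,1]$, and there is nothing to prove), and, as the invariance of $K$ under conformal automorphisms of $\H$ fixing $0$ and $1$ is already known, we may take $\varphi$ to be the conformal map from $\H\setminus B$ onto $\H\setminus\tilde B$ fixing $0$ and $1$.

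First I would fix $\delta$ small enough that $d(B,R_\delta)>0$, $\varphi(R_\delta)\in\mathcal A$ and $d(\tilde B,\varphi(R_\delta))>0$, and apply Lemma~\ref{lem:restriction1} with $A=R_\delta$: the measure $\varphi(\bar\rho_{R_\delta})$ restricted to $\{(\theta\cup\Gamma)\cap B=\emptyset\}$ equals $\varphi'(0)^{-\beta}$ times $\bar\rho_{\varphi(R_\delta)}$ restricted to $\{(\theta\cup\Gamma)\cap\tilde B=\emptyset\}$. Since $\varphi$ is conformal, fixes $0$, and carries loops to loops preserving incidences, it maps the pinned complete cluster and its outer boundary of a configuration to those of the image configuration; hence, on the event that a configuration avoids $B$, the pushforward of $E_\delta$ under $\varphi$ is exactly the event $\widetilde E_\delta$ that the outer boundary of the pinned complete cluster stays in $\varphi(R_\delta)$ until it reaches $\varphi([1,1+2\delta i])$. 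Intersecting both sides of the above identity with $\widetilde E_\delta$ and renormalizing (the constant $\varphi'(0)^{-\beta}$ drops out), I obtain that the law of $\varphi(K^\delta)$ conditioned on $\{K^\delta\cap B=\emptyset\}$ equals $\bar\rho_{\varphi(R_\delta)}$ restricted to $\widetilde E_\delta\cap\{(\theta\cup\Gamma)\cap\tilde B=\emptyset\}$, renormalized.

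Finally I would let $\delta\to0$. On the left, $K^\delta$ converges in law to $K$ and $\{K^\delta\cap B=\emptyset\}$ is a continuity event with positive limiting probability (because $B$ is at positive distance from $[0,1]$), so, using the continuity of $\varphi$ on collections of loops contained in $\H\setminus B$, the left-hand side converges to the law of $\varphi(K)$ conditioned on $\{K\cap B=\emptyset\}$. On the right, $\varphi(R_\delta)$ is a family of neighbourhoods of $[0,1]$ shrinking to $[0,1]$, $\varphi([1,1+2\delta i])$ is a target arc shrinking to $\{1\}$, and $\widetilde E_\delta$ is the corresponding ``stay close to $[0,1]$'' event; by the robustness of the construction of the glued configuration with respect to the choice of exploration path and the shape of the shrinking neighbourhood---the content of Lemma~\ref{lem:another-glue} in the form used around Lemma~\ref{lem:edge}, applied along the image under $\varphi$ of the vertical half-line---$\bar\rho_{\varphi(R_\delta)}$ restricted to $\widetilde E_\delta$ and renormalized converges to the law of the boundary-touching loops of a $Q^g$-configuration, namely the law of $K$; adding the restriction $\{(\theta\cup\Gamma)\cap\tilde B=\emptyset\}$, the right-hand side converges to the law of $K$ conditioned on $\{K\cap\tilde B=\emptyset\}$. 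Comparing the two limits gives the proposition.

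The main obstacle is this last convergence: one must check that $\bar\rho_{\varphi(R_\delta)}$ restricted to $\widetilde E_\delta$, renormalized, still converges to the boundary-touching part of $Q^g$, although the neighbourhoods $\varphi(R_\delta)$ are curved and $\varphi$-distorted rather than the standard rectangles $R_\delta$, and $\varphi([1,1+2\delta i])$ is not a vertical segment. The cleanest way around it is to re-run the proof of Lemma~\ref{lem:another-glue} in the domain $\H\setminus\tilde B$: use the first-decomposition Lemma~\ref{lem2} with $A=\varphi(R_\delta)$ to isolate the boundary-touching loops, explore along the $\varphi$-image of the vertical half-line, and verify that the two conformal maps entering the gluing procedure are uniformly close to the identity as $\delta\to0$. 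The auxiliary points---that $E_\delta$ is genuinely $K^\delta$-measurable (so that it descends to a configuration-measurable event for $\bar\rho_{R_\delta}$ and its pushforward under $\varphi$ is $\widetilde E_\delta$), and that the conditioning events $\{\cdot\cap B=\emptyset\}$ and $\{\cdot\cap\tilde B=\emptyset\}$ are continuity sets, being of the form ``no loop meets a fixed compact set at positive distance from $[0,1]$''---are already used in the proofs of Lemmas~\ref{lem:glued-decomp} and~\ref{lem:restriction1} and present no further difficulty.
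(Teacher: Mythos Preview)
Your proposal is correct and follows essentially the same approach as the paper's own proof: apply Lemma~\ref{lem:restriction1} to $\bar\rho_{R_\delta}$, intersect with $E_\delta$ (respectively its image $\varphi(E_\delta)$), renormalize, and pass to the limit $\delta\to 0$ on both sides. You are in fact more explicit than the paper about the one genuinely delicate point---that the right-hand side involves curved neighbourhoods $\varphi(R_\delta)$ and a non-vertical target arc, so one must re-run the argument of Lemma~\ref{lem:another-glue} along the $\varphi$-image of the exploration---which the paper dispatches with the phrase ``for the same reason as stated in Lemma~\ref{lem:another-glue},~\ref{lem:glued-decomp}''.
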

\begin{proof}
We continue with the notations of the proof of the previous lemma. We will show that the set $K^\delta$ (with the law $p_\delta$)  satisfies conformal restriction and then conclude by letting $\delta$ go to $0$.

Let $\Lambda$ be a one-point pinned configuration in the support of $\bar\nu$. We denote by $\Lambda^{R_\delta}$ the set of loops in $\Lambda$ that touch $R_\delta$. In the following, we will talk about `conditioning' of infinite measures on some event of finite mass, meaning that we first restrict the measure to this event and then renormalize it to be a probability measure.

Let $p^\delta_{B}$ be the measure $p^\delta$  conditioned on $\{K^\delta\cap B=\emptyset\}$.  
In other words, $p^\delta_{B}$ is also equal to the measure $\bar\rho_{R_\delta}$ first `conditioned' on $E_\delta$ and then conditioned on $\{\Lambda^{R_\delta}\cap B=\emptyset\}$. We can also exchange the order of conditioning and say that $p^\delta_{B}$ is identical to the measure $\bar\rho_{R_\delta}$ first  restricted to the event  $\{\Lambda^{R_\delta}\cap B=\emptyset\}$ and then `conditioned' on $E_\delta$.

By Lemma \ref{lem:restriction1}, the measure $\varphi(\bar\rho_{R_\delta})$ restricted to  $\{\Lambda^{R_\delta}\cap B=\emptyset\}$ is equal to $\varphi'(0)^{-\beta}$ times the measure  $\bar\rho_{\varphi(R_\delta)}$ restricted to  $\{\Lambda^{\varphi(R_\delta)}\cap \tilde B=\emptyset\}$.
Hence $\varphi(p_B^\delta)$ is equal to the measure  $\bar\rho_{\varphi(R_\delta)}$ first restricted to  $\{\Lambda^{\varphi(R_\delta)}\cap \tilde B=\emptyset\}$ and then  `conditioned' on $\varphi(E_\delta)$ where $\varphi(E_\delta)$  is the event that the pre-image by $\varphi$ of a configuration satisfies the event $E_\delta$.
We can again exchange the order of conditioning. Hence $\varphi(p_B^\delta)$ is equal to the measure  $\bar\rho_{\varphi(R_\delta)}$ first  `conditioned' on $\varphi(E_\delta)$ and then conditioned on  $\{\Lambda^{\varphi(R_\delta)}\cap \tilde B=\emptyset\}$.

On the one hand, we know by Lemma \ref{lem:glued-decomp}, that as $\delta\to 0$, $\varphi(p_B^\delta)$ converges to the measure of $\varphi(K)$ conditioned on $\{K\cap B=\emptyset\}$. On the other hand, the measure  $\bar\rho_{\varphi(R_\delta)}$   first  `conditioned' on $\varphi(E_\delta)$ and then conditioned on  $\{\Lambda^{R_\delta}\cap \tilde B=\emptyset\}$ converges in  law to $K$ conditioned on $\{K\cap\tilde B=\emptyset\}$ as $\delta\to 0$, for the same reason as stated in Lemma \ref{lem:another-glue}, \ref{lem:glued-decomp}.
Therefore we have proved the proposition.
\end{proof}

In addition to this proposition, to determine the law of the outer boundary (or the filling) of $K$, it is  \cite{MR1992830} enough to find out the restriction exponent $\alpha$ defined in (\ref{restriction}).
Let $F(K)$ be the filling of the union of all the loops in $K$. Then we have the following proposition.
\begin{proposition}
The set $F(K)$ satisfies one-sided chordal restriction with exponent $\alpha=(6-\kappa)/(2\kappa)$.
\end{proposition}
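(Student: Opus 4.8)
The plan is to combine the conformal restriction property for $K$ just obtained with the loop-soup decomposition of the glued configuration (Lemma \ref{lem:glued-decomp}) and the Werner--Wu identification of the outer boundary of a restriction sample to which an independent Brownian loop-soup has been added. First I would verify that $F(K)$ is genuinely a one-sided chordal restriction sample in the sense recalled in the introduction. Its law is invariant under the conformal automorphisms of $\H$ fixing $0$ and $1$, because $Q^g$ --- hence the law of $K$ and of $F(K)$ --- is. The set $F(K)$ is almost surely a simply connected relatively closed subset of $\overline\H$ with $F(K)\cap\R=[0,1]$: every point of $[0,1]$ lies on the glued complete cluster, hence on a loop of $K$, while the loops of $K$ meet $\R$ only along $[0,1]$. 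Finally, for a hull $B$ at positive distance from $[0,1]$ one has $\{F(K)\cap B=\emptyset\}=\{K\cap B=\emptyset\}$, since $\bigcup K$, meeting $\R$ only along $[0,1]$, cannot disconnect such a $B$ from $\infty$; so the restriction axiom for $F(K)$ is inherited verbatim from that for $K$. By \cite{MR1992830}, $F(K)$ then has a well-defined exponent $\alpha\ge 0$ with $\P(F(K)\cap A=\emptyset)=\varphi_A'(0)^{\alpha}\,\varphi_A'(1)^{\alpha}$, and it remains only to identify $\alpha$.

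To compute $\alpha$ I would use that, by Lemma \ref{lem:glued-decomp}, the glued configuration is the disjoint union of $K$ and an independent loop-soup $\Gamma_0$ of intensity $c=c(\kappa)$ in $\H$. The glued complete cluster $\theta_g$ is the cluster of $K\cup\Gamma_0$ that contains $K$, so $F(\theta_g)$ is obtained from $F(K)$ by adjoining all the loop-soup clusters of $\Gamma_0$ that it meets; and by the CLE$_\kappa$/SLE description recalled from \cite{MR2979861}, the non-real part of $\partial F(\theta_g)$ is a chordal SLE$_\kappa$ from $0$ to $1$ in $\H$. Thus the outer boundary of the superposition of the one-sided restriction sample $F(K)$ (of exponent $\alpha$) with an independent loop-soup of intensity $c(\kappa)$ is a chordal SLE$_\kappa$. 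By the Werner--Wu correspondence \cite{MR3035764}, the outer boundary of such a superposition is a chordal SLE$_{\kappa'}$ precisely when $c=c(\kappa')$ and $\alpha=(6-\kappa')/(2\kappa')$; since here the boundary is SLE$_\kappa$ we must have $\kappa'=\kappa$, whence $\alpha=(6-\kappa)/(2\kappa)$, as claimed. (If one prefers to avoid the ``only if'' half of the correspondence, one can instead couple $F(K)$ monotonically with the restriction sample of exponent $(6-\kappa)/(2\kappa)$, superpose the same loop-soup to both, and note that two nested random sets with the same law --- here both are SLE$_\kappa$ outer boundaries --- must coincide almost surely, which forces the two exponents to agree; uniqueness also follows from the additivity of one-sided restriction exponents.)

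The step I expect to be the main obstacle is the identification in the second paragraph of $F(\theta_g)$ with the Werner--Wu superposition of $F(K)$ and the loop-soup: one has to check that adjoining the \emph{entire} independent loop-soup $\Gamma_0$ rather than only the clusters of $\Gamma_0$ that touch $F(K)$ is harmless for the relevant outer boundary, that no loop of $\Gamma_0$ lying inside $F(K)$ contributes to $\partial F(\theta_g)$, and --- most importantly --- that the boundary curve produced this way is exactly the SLE$_\kappa$ curve coming out of the Markovian exploration, rather than merely a curve of the same law. This matching is a soft topological point, but it must be carried out with care; once it is in place, the proof is complete modulo facts quoted from \cite{MR1992830}, \cite{MR2979861} and \cite{MR3035764}.
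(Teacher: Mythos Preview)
Your proposal is correct and follows essentially the same route as the paper: both use Lemma \ref{lem:glued-decomp} to write the glued configuration as $K$ plus an independent loop-soup, apply the Werner--Wu result \cite{MR3035764} to identify the upper boundary of the superposition as an SLE$_\kappa(\rho)$, and then use the fact from \cite{MR2979861} that this boundary is actually SLE$_\kappa$ (i.e.\ $\rho=0$) to read off $\alpha=(6-\kappa)/(2\kappa)$. The paper quotes the general $\alpha=(\rho+2)(\rho+6-\kappa)/(4\kappa)$ formula and invokes Lemma \ref{lem:edge} for the identification of the boundary curve, while you jump directly to the $\rho=0$ conclusion and flag the topological matching as the delicate point --- but the substance is the same.
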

\begin{proof}
The set $F(K)$ satisfies one-sided chordal restriction with a certain exponent $\alpha>0$.
It was  shown in  \cite{MR3035764} that for a one-sided restriction sample $J$ of exponent $\alpha$ and an independent CLE$_\kappa$,  the outer boundary  of the union of $J$ with all the loops in the CLE$_\kappa$ that it intersects has the law of a SLE$_\kappa(\rho)$ such that $\alpha=(\rho+2)(\rho+6-\kappa)/(4\kappa), \rho>-2$.
Therefore, Lemma \ref{lem:glued-decomp} and Lemma \ref{lem:edge} imply that the curve $h_t (e_0(\gamma))$ is a SLE$_\kappa(\rho)$. 
However we also know \cite{MR2979861} that the curve $h_t (e_0(\gamma))$ is in fact a SLE$_\kappa$ with $\rho=0$. Therefore we must have $\alpha=(6-\kappa)/(2\kappa)$.
\end{proof}

Let us now explain why these  decomposition and conformal restriction statements for the glued configuration in the upper half-plane do imply our main result for explorations of CLE.
For this, using conformal invariance, we need to show that the image under $f_T$ of the loop-soup $\Gamma$ in Figure \ref{fig:intro} is exactly a glued configuration in $\U$.
This will come as a rather direct consequence of the fact derived in \cite{MR2979861} that when one explores/constructs a CLE in a Markovian way, the bubbles that one encounters are distributed like a Poisson point process with intensity given by the (infinite) pinned loop measure $\mu$. 
 More precisely, it is shown in \cite{MR2979861} that one can construct the CLE  in a Markovian way, by composing an ordered countable family  $(f_{t_n})$ from  the connected component of $\U\setminus \xi \gamma_{t_n}$ outside of $\xi\gamma_{t_n}$ onto $\U$, where $(\gamma_{t_n})$ is obtained via a Poisson point process of loops pinned on $1$
  with intensity $\mu$, and  the pinned point $\xi$ is chosen on $\partial
\U$ in a measurable way with respect to the past exploration.
For instance, when exploring along $[-1,1]$, the successive (infinitely many) loops that one meets in Figure \ref{fig:markov-exp} (resp. the clusters in Figure \ref{fig:intro}) are exactly distributed as a Poisson point process with intensity $\mu$ (resp. $\nu$). 
In the setting of Figure \ref{fig:intro}, up to $T$ such that $\gamma_T$ is in the middle of a loop, $f_{\sigma(T)}(\Gamma)$ (where $f_{\sigma(T)}$ is any conformal map from $\U\setminus K_{\sigma(T)}$ onto $\U$) is a one-point pinned configuration ``conditioned'' on the partial discovery of the boundary of its pinned cluster i.e. of   $f_{\sigma(T)}(\gamma(\sigma(T),T))$ (this conditioning makes that $f_{\sigma(T)}(\Gamma)$ is indeed defined under a probability measure). Then we can further map out the curve $f_{\sigma(T)}(\gamma(\sigma(T),T))$ and deduce by Lemma \ref{lem:edge} that $f_T(\Gamma)$ is a glued configuration.

Finally, in order to complete the proof of Theorem \ref {main theorem}, it remains to prove Proposition \ref{coro1} i.e. that the set $\Gamma^b$ satisfies interior restriction.

\begin {proof}[Proof of Proposition \ref{coro1}] Here we go back to the unit disk setting and we continue with the notation in the introduction. We adopt the exploration process with radial parametrization as described in and above Figure \ref{fig:markov-exp}. Note that $S$ is the time at which we discover the (entire) loop surrounding the origin.

By looking at stopping times tending to $S$, we deduce from the main theorem that the set $\Gamma^b$ satisfies interior restriction with the marked point $\varphi_S(1)$. However we can rule out the dependence of the law of $\Gamma^b$ on the point $\varphi_S(1)$ in a similar way as in the proof of Lemma \ref{lem02}. Therefore $\Gamma^b$ satisfies interior restriction.
\end{proof}

\subsection*{Acknowledgements}
The author acknowledges support of the SNF grant SNF-155922. The author is also part of the NCCR Swissmap.
The author is also very grateful to Wendelin Werner for his guidance into this topic and for many insightful suggestions.

\bibliographystyle{plain}
\bibliography{cr} 

\medbreak

Address: D-Math, 
 ETH Z\"urich, 
 R\"amistrasse 101, 8092 Z\"urich, Switzerland
 
 \medbreak

e-mail: {wei.qian@math.ethz.ch}

\end{document}